\renewcommand{\epsilon}{\varepsilon}
\newcommand{\PP}{{\mathbb P}}
\newcommand{\G}{{\mathbb G}}
\newcommand{\C}{{\mathbb C}}
\newcommand{\Q}{{\mathbb Q}}
\newcommand{\Z}{{\mathbb Z}}
\newcommand{\CP}{\C\PP}
\newcommand{\rank}{{\operatorname{rank}}}
\newcommand{\fo}{{\operatorname{for}}}
\newcommand{\Pic}{{\operatorname{Pic}}}
\renewcommand{\phi}{\varphi}
\newcommand{\ocal}{\mathcal{O}}
\newtheorem{theo}{{Theorem}}[section]
\newtheorem{cor}[theo]{{Corollary}}
\newtheorem{lem}[theo]{{Lemma}}
\newtheorem{prop}[theo]{{Proposition}}
\newenvironment{rem}{\medskip\noindent{\it Remark:\/} }{\medskip}
\title[The Demailly-semple jet bundle]{On the Demailly-Semple jet bundles of hypersurfaces in $\CP^3$ }
\author{Jingzhou Sun }
\address{Department of Mathematics, Johns Hopkins University, Baltimore, MD
21218, USA} \email{jzsun@math.jhu.edu}
\thanks{Research  partially supported by NSF grant
 DMS-0901333.}
\date{\today}
\begin{document}

\begin{abstract}
Let $X$ be a smooth hypersurface of degree $d$ in $\CP^3$. By totally algebraic calculations, we prove that on the third Demailly-Semple jet bundle $X_3$ of $X$, the bundle $\ocal_{X_3}(1)$ is big for $d\geq 11$, and that on the fourth  Demailly-Semple jet bundle $X_4$ of $X$, the bundle $\ocal_{X_4}(1)$ is big for $d\geq 10$, improving a recent result of Diverio. 

\end{abstract}

\maketitle

 \tableofcontents
 \section{Introduction}
On the road to conquer the Kobayashi conjecture, more generally the Green-Griffiths conjecture, one fundamental idea is from Green-Griffiths's theorem(\cite{gg}). The idea is that the sections of jet differentials with values in a negative line bundle put restrains on entire curves $f : \C \rightarrow X$, therefore if we can produce enough such sections we will be able to prove the algebraic degeneracy of entire curves. Demaily(\cite{de}) generalized this idea to invariant jet differentials(theorem \ref{theo:ggde}). A very important advantage of this generalization is that the invariant jet differentials can be considered as direct images of line bundles on Demailly-Semple jet bundles, which is more algebraically computable.

Along this line of ideas, knowing some sections of invariant jet differentials, there are basically two directions. The first one is to algebraically analyze the base loci to show that the base loci are of small dimensions. In this direction, Demailly and Goul (\cite{dee}) showed that very generic hypersurfaces of degree $d\geq 21$ in $\CP^3$ is Kobayashi hyperbolic. Around the same time in \cite{mcm}, using different approach McQuillan showed that for $d\geq 36$ in $\CP^3$ generic hypersurfaces are Kobayashi hyperbolic as a corollary of his general theorem. The second direction is to use deformation methods(suggested by Siu\cite{siu}) to produce more sections and to show that base loci are of small dimensions. One explicit deformation method that Siu suggested is to use meromorphic vector fields to differentiate given sections. In this direction, Mihai P\v{a}un \cite{pa} showed that very generic hypersurfaces of degree $d\geq 18$ in $\CP^3$ is Kobayashi hyperbolic. In the same direction, S. Diverio, J. Merker, and E. Rousseau \cite{dmr} showed that in a generic hypersurface of degree $d\geq 2^{n^5}$ in $\CP^{n+1}$ every entire curve is algebraically degenerate.

Before one can choose from the two directions, the first key step is to get some sections of invariant jet differentials with values in a negative line bundle. Let $E_{k,m}T_X^*$ stand for the sheaf of invariant jet differentials
of order $k$ and total degree $m$ on a projective manifold $X$, and let $X_k$ denote the Demailly-Semple $k$-jet bundle of $X$, both of which will be defined in greater detail in section \ref{sec:basic}. Then $E_{2,m}T_X^*=(\pi_{k,0})_*\ocal_{X_k}(m)$, where $\pi_{k,o}:X_k\rightarrow X$ is the projection. When $X$ is of dimension 2 and $k=2$, $E_{2,m}T_X^*$ has a natural filtration
\begin{equation}\label{for:fil2}0\rightarrow S^mT_X^*\rightarrow E_{2,m}T_X^*\rightarrow E_{2,m-3}T_X^*\otimes K_X\rightarrow 0\end{equation}
so that one can calculate the Euler characteristic and further more show that $\ocal_{X_2}(1)$ is big when $X$ is a hypersurface of degree $d\geq 15$ in $\CP^3$ \cite{de}. Actually, for $k\geq 2$, $E_{k,m}T_X^*$ has a similar filtration up on $X_{k-2}$(proposition \ref{prop:fil}).

In \cite{di1},\cite{di2}and \cite{di3}, the holomorphic Morse inequalities were used to show that in $\CP^{n+1}$ hypersurfaces of certain degrees have sections of $k$-jet differentials with values in a negative curve. In particular, when $n=2$ it was showed in \cite{di3} that  $\ocal_{X_3}(1)$ and $\ocal_{X_4}(1)$ are big for $d\geq 12$

In this article, we will mainly prove two results in this direction of efforts. Both are  a little better than that in \cite{di3}.
\begin{theo}\label{theo:1}
Let $X$ be a hypersurface of degree $d\geq 11$ in $\CP^3$, then the line bundle $\ocal_{X_3}(1)$ on the Demailly-Semple 3-bundle $X_3$ of $X$ is big.
\end{theo}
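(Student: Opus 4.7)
The strategy I would follow is the familiar bigness scheme: identify $H^0(X_3,\ocal_{X_3}(m))$ with $H^0(X, E_{3,m}T_X^*)$ via the defining property $E_{3,m}T_X^* = (\pi_{3,0})_*\ocal_{X_3}(m)$, compute the Euler characteristic $\chi(X, E_{3,m}T_X^*)$ as an explicit polynomial $P(m,d)$, show that the coefficient of $m^5$ becomes positive at $d=11$, and finally show $h^2(X, E_{3,m}T_X^*) = o(m^5)$ so that the positivity of $P$ is carried by $h^0$. Since $\dim X_3 = 5$, this is precisely what bigness of $\ocal_{X_3}(1)$ requires.

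For the Euler characteristic, I would iterate Proposition \ref{prop:fil}. Since for $k=3$ the filtration lives on $X_1 = \PP(T_X)$, pushing it forward along $\pi_{1,0}$ decomposes $E_{3,m}T_X^*$ into graded pieces on $X$ of the general form $S^a T_X^* \otimes K_X^b$, where $(a,b)$ ranges over a combinatorial set determined by $m$. For a smooth degree-$d$ hypersurface in $\CP^3$ one has $K_X = \ocal_X(d-4)$, and from the normal bundle sequence one computes $c_1(T_X) = (4-d)H$, $c_2(T_X) = (d^2-4d+6)H^2$, $H^2\cdot X = d$, where $H$ is the hyperplane class. Riemann--Roch on $X$ then produces $\chi(X, S^a T_X^* \otimes K_X^b)$ as an explicit polynomial in $a$, $b$, $d$; summing over the graded pieces yields $P(m,d)$. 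Extracting the coefficient of $m^5$ reduces the bigness (modulo the $h^2$ bound below) to a single algebraic inequality in $d$, which I expect to hold precisely for $d\geq 11$.

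To promote $\chi > 0$ to $h^0 > 0$, I would bound $h^2(X, E_{3,m}T_X^*)$ piece by piece. By Serre duality, $h^2(X, S^a T_X^* \otimes K_X^b) = h^0(X, S^a T_X \otimes K_X^{1-b})$, and for smooth hypersurfaces of degree $d\geq 5$ the standard estimates on sections of symmetric powers of the tangent bundle (as used in \cite{de}) give polynomial bounds strictly subleading in $m$; summed over the filtration this yields $h^2(X, E_{3,m}T_X^*) = o(m^5)$. The main obstacle I anticipate is purely computational: because the goal is to improve Diverio's threshold from $d=12$ by exactly one, the coefficient of $m^5$ in $P(m,d)$ must be computed with essentially no slack, which forces one to pin down the graded pieces of the iterated filtration precisely rather than via rough inequalities, and to absorb every error term into the subleading $h^2$ estimate rather than into the leading $\chi$ calculation.
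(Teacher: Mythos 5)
Your skeleton (sections of $\ocal_3(m)$ via direct images, Euler characteristic, control of even cohomology, $\dim X_3=5$) is the right one, but the proposal fails at its central claim, namely that $h^2=o(m^5)$. When you push the filtration down to $X$ --- concretely, $(\pi_{3,1})_*\ocal_3(m)=E_{2,m}V_1^*$ by Theorem \ref{theo:de 2,1}, and after filtering via Proposition \ref{prop:fil} and the sequence $0\rightarrow\ocal_1(1)\rightarrow V_1^*\rightarrow T_{X_1|X}^*\rightarrow 0$ the graded pieces are $\ocal_1(m-4i-3j)\otimes\pi_1^*K_X^{\otimes(i+j)}$ --- a large portion of the index range has negative twist $m-4i-3j\leq -2$. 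For those pieces $\pi_{1*}$ vanishes while $R^1\pi_{1*}\ocal_1(-t)=(-K_X)\otimes S^{t-2}T_X$ (Proposition \ref{for:R1}), so $E_{3,m}T_X^*$ does \emph{not} decompose on $X$ into pieces $S^aT_X^*\otimes K_X^{\otimes b}$ with $a,b\geq 0$ as you assert, and the resulting contribution to the second cohomology is, via Leray, a sum of terms $h^1(X,S^pT_X\otimes((q)K_X))$ on the surface. These are of order exactly $m^5$: in the paper they sum to $(\frac{2013}{1536}c_1^2-\frac{2073}{480}c_2)n^5+O(n^4)$ (Proposition \ref{prop:h2}), and since $c_2-c_1^2=d(4d-10)>0$ for hypersurfaces this is a genuine leading-order loss, not an error term to be absorbed. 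Controlling it is the actual content of the proof: one writes $h^1=h^0+h^2-\chi$ for these bundles, kills $h^2$ with the Bogomolov--De Oliveira vanishing (Theorem \ref{theo:bb}), and bounds $h^0$ by restricting to general curves $C_{p,q}\in|\ocal_X((d-4)q-p)|$ on which $T_X^*$ remains semistable, which requires Tsuji's theorem \ref{theo:ts} combined with Flenner's restriction theorem \ref{theo:f}; semistability then reduces the count to an Euler characteristic on the curve. None of this appears in your proposal, and the ``standard estimates'' of \cite{de} bound sections of symmetric powers of $\Omega^1_X$, not the $h^1$ terms that dominate here.

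Even granting that machinery, your choice of the full bundle $E_{3,m}T_X^*$ cannot reach $d=11$. By Lemma \ref{lem:equal} and Proposition \ref{prop:fil}, $E_{3,3n}T_X^*$ is precisely the direct image of the balanced weighted bundle $\ocal_3(2n,n)$, and the paper shows that with all of the above estimates this weight yields bigness only for $d\geq 12$. The additional idea needed for $d=11$ is to exploit the injection $\ocal_3(bn,n)\hookrightarrow\ocal_3((b+1)n)$ and let $b\in[2,\infty)$ vary as a rational parameter: redoing the entire computation for $\ocal_3(bn,n)$ produces a leading coefficient $y(c)$, $c=b+1$, which for $d=11$ is not positive at the balanced weight but becomes positive for some $c\in(4,7)$. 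So there is no single inequality in $d$ whose threshold is $11$; a one-parameter family of estimates must be optimized, and your expectation that the $E_{3,m}$ computation alone ``holds precisely for $d\geq 11$'' is incorrect.
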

 It was proved in \cite{me} that $\ocal_{X_4}(1)$ is big for $d\geq 9$ by considering the full algebra of Demailly invariants, but the method used there is not accessible to the author.
\begin{theo}\label{theo:2}
Let $X$ be a hypersurface of degree $d\geq 10$ in $\CP^3$, then the line bundle $\ocal_{X_4}(1)$ on the Demailly-Semple 4-bundle $X_4$ of $X$ is big.
\end{theo}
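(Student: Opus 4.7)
The plan is to establish bigness of $\ocal_{X_4}(1)$ by showing that
\[
h^0(X_4,\ocal_{X_4}(m)) \;\geq\; c\, m^{\dim X_4} \;=\; c\, m^6
\]
for some constant $c>0$ and all $m\gg 0$. Since $h^0(X_4,\ocal_{X_4}(m)) = h^0(X,E_{4,m}T_X^*)$ via the identification $(\pi_{4,0})_*\ocal_{X_4}(m)=E_{4,m}T_X^*$, the problem reduces to a cohomological estimate on the surface $X$.

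The first step is to apply the filtration of Proposition~\ref{prop:fil} at $k=4$, which lives over $X_2$, combined iteratively with the $k=2$ filtration \eqref{for:fil2} to push all the way down to $X$. This expresses $E_{4,m}T_X^*$ as a successive extension whose graded pieces are, up to line-bundle twists, of the form $S^{a}T_X^* \otimes K_X^{\otimes b}$, with $(a,b)$ ranging over a finite combinatorial index set depending on $m$. The Euler characteristic $\chi(X,E_{4,m}T_X^*)$ then breaks into a sum of contributions, each of which is computed by Riemann--Roch on the surface $X$ using $c_1(T_X^*) = (d-4)h$ and $c_2(T_X) = (d^2-4d+6)h^2$ with $h^2=d$.

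The second step is to assemble these contributions and extract the coefficient of $m^6$ in $\chi(X,E_{4,m}T_X^*)$ as an explicit polynomial $P(d)$; the central numerical check is that $P(d) > 0$ precisely for $d \geq 10$. To pass from $\chi$ to $h^0$, I would bound $h^2(X,E_{4,m}T_X^*)$ from above: by Serre duality on $X$, each graded piece satisfies $h^2(X, S^a T_X^* \otimes K_X^{\otimes b}) = h^0(X, S^a T_X \otimes K_X^{1-b})$, which vanishes once $b$ is large relative to $a$; the surviving pieces contribute only $O(m^5)$, strictly less than the $m^6$ growth of $\chi$. Hence $h^0 \geq \chi - h^2 \sim P(d)\,m^6/6! + O(m^5)$, which is positive for $d\geq 10$ and $m$ large.

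The main obstacle is the combinatorial complexity of the filtration at $k=4$: compared with $k=2$ or $k=3$, pushing $\ocal_{X_4}(m)$ down to $X_2$ and then to $X$ produces substantially more graded pieces, and each must be tracked precisely to produce $P(d)$ without slack. The threshold $d=10$ leaves only a thin positive margin, so no negative Riemann--Roch contribution can be absorbed crudely, and the Serre-duality bound on $h^2$ must be tight enough that it does not eat into the leading $m^6$ coefficient. This is where the improvement over Diverio's $d\geq 12$ comes from: computing the $m^6$ coefficient exactly rather than via the one-sided estimates of holomorphic Morse inequalities is what allows the threshold to drop to $d=10$.
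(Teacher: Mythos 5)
Your plan has a fatal gap at its first step: the graded decomposition of $E_{4,m}T_X^*$ that you take as input does not follow from Proposition~\ref{prop:fil}. That proposition is strictly an order-$2$ statement — it filters $E_{2,m}V^*$ for a rank-$2$ bundle $V$, and its proof rests on Lemma~\ref{lem:equal}, the identification of $E_{2,m}V^*$ with the direct image of a weighted line bundle, which has no analogue at order $4$. What the paper's remark actually gives you for $k=4$ is a filtration of $E_{2,m}V_2^*$ \emph{on $X_2$}; pushing this down to $X$ does not ``iterate'' into an extension with graded pieces $S^aT_X^*\otimes K_X^{\otimes b}$ of explicit multiplicities, because the intermediate graded pieces acquire negative twists $\ocal_1(-m)$, $\ocal_2(-m)$, for which $\pi_{1*}$ vanishes while $R^1\pi_{1*}$ does not (Proposition~\ref{for:R1}); the Leray spectral sequence then mixes degrees and destroys the naive successive-extension picture. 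Determining the true graded structure of $E_{4,m}T_X^*$ on a surface is precisely the invariant-theoretic computation of Merker \cite{me} — which yields the threshold $d\geq 9$, and which this paper explicitly declines to use as ``not accessible.'' So your polynomial $P(d)$ is never actually produced, and the claim that $P(d)>0$ ``precisely for $d\geq 10$'' is fitted to the statement rather than derived; by the route you describe the correct threshold would in fact be $9$, not $10$.

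The paper's proof is structured quite differently, and two of its essential mechanisms are absent from your proposal. First, it does not estimate $h^0(X_4,\ocal_4(m))$ directly: it proves bigness of the carefully weighted bundle $\ocal_4(6n,2n,n)$, which injects into $\ocal_4(9n)$; the remark following the Euler-characteristic computation shows that the naive choice $\ocal_4(2n,n)$ has \emph{negative} leading coefficient $(\frac{1}{6}c_1^2-\frac{61}{30}c_2)n^6$, so the choice of weights is not cosmetic but decisive, and nothing in your plan selects them. Second, your claim that after the Serre-duality vanishing the surviving correction terms contribute only $O(m^5)$ is false in the paper's accounting: the estimate of $h^2(X_4,\ocal_4(6n,2n,n))$ reduces via Leray and Proposition~\ref{for:R1} to terms $h^1(X, S^{\alpha}T_X^*\otimes \beta K_X)$, which are controlled by combining $\chi$ on $X$, the Bogomolov--De Oliveira vanishing (Theorem~\ref{theo:bb}), and — what the introduction calls the main idea of the whole paper — Tsuji's $K_X$-semistability of $T_X$ together with Flenner's restriction theorem, so that on a general curve $C_{\alpha,\beta}\in|\ocal_X((d-4)\beta-\alpha)|$ semistability forces $h^1(C_{\alpha,\beta},\cdot)=0$ and Riemann--Roch on the curve bounds $h^0$. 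These restriction-to-curve terms enter at full order $n^6$ and are exactly what sets the threshold: at $d=9$ they drive the paper's lower bound to $-304.5\ldots n^6$. Your proposal contains no substitute for this semistability machinery, and without it the passage from $\chi$ to $h^0$ cannot be closed at order $m^6$.
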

The main idea of the proofs of these two theorem is to apply the semistability of the cotangent bundle of $X$. Since in our estimations of dimensions of cohomology groups, we use inequalities from filtrations, which is somewhat coarse, we can not say that the two lower bounds are sharp. We hope that new techniques can be introduced to get better lower bounds.

This article is organized as follows. In section \ref{sec:basic}, we give definitions and results on jet differentials and Demailly-Semple jet bundles following those in \cite{de} and \cite{dee}. In section \ref{sec:semistable} we introduce the knowledge of semistable vector bundles we need. Then in section \ref{sec:3sem} and \ref{sec:4sem} we will prove theorem \ref{theo:1} and theorem \ref{theo:2}.

\textbf{Acknowledgement}:
The author would like to thank Professor Bernard Shiffman for introducing me this topic, for his continuous guidance and consistent patience in answering all my questions.  The author would also like to thank Professor Fedor Bogomolov and Yi Zhu  for helpful discussions. The author want also to thank Professor Simone Diverio and Professor Jo\"{e}l Merker for their suggestions on this article.

 \section{Jet differentials and Demailly-Semple jet bundle }\label{sec:basic}
In the terminology of
\cite{de}, a directed manifold is a pair $(X, V)$, where $X$ is a complex manifold
and $V\subset T_X$ a subbundle.
Let $(X, V)$ be a complex directed manifold, $
J_kV \rightarrow X$ is defined to be the bundle of $k$-jets of germs of curves $f: (\C,0)
\rightarrow X$ which are
tangent to V, i.e., such that $f'(t)\in  V_{f(t)}$ for all $t$ in a neighborhood of 0, together
with the projection map $f\rightarrow f(0)$ onto $X$. It is easy to check that $J_kV$ is actually a
subbundle of $J_kT_X$.  Let $\G_k$ be the group of germs of $k$-jet
biholomorphisms of $(\C, 0)$, that is, the group of germs of biholomorphic maps
$$t \rightarrow \phi(t)
= a_1t + a_2t^2 +\cdots +a_kt^k, \qquad a_1\in \C^*, a_j\in \C, \quad j > 2$$
in which the composition law is taken modulo terms $t^j$ of degree $j > k$. The
group $\G_k$ acts on the left on $J_kV$ by reparametrization, $(\phi,f)\rightarrow f\circ\phi$.

 Given a directed manifold $(X,V)$ with $\rank V=r$, let $\tilde{X}=\PP(V)$. The subbundle $\tilde{V}\subset T_{\tilde{X}}$ is defined by $$\tilde{V}_{x,[v]}=\{\xi\in T_{\tilde{X},(x,[v])}|\pi_*\xi\in \C\cdot v\}$$ for any $x\in X$ and any $v\in T_{X,x}\backslash \{0\}$. Let $T_{\tilde{X}|X}$ denote the relative tangent bundle with respect to the projection $\pi:\tilde{X}\rightarrow X$, we will be making use of the following exact sequences
\begin{equation}0\rightarrow T_{\tilde{X}|X}\rightarrow \tilde{V} \xrightarrow{\pi_*} \ocal_{\tilde{X}}(-1)\rightarrow 0\label{for:exact v}
\end{equation}\begin{equation}0\rightarrow \ocal_{\tilde{X}}\rightarrow \pi^*V\otimes \ocal_{\tilde{X}}(1) \rightarrow T_{\tilde{X}|X}\rightarrow 0
\end{equation}
From the above exact sequences we get
\begin{eqnarray}
c_1(\tilde{V})=c_1(T_{\tilde{X}|X})+c_1( \ocal_{\tilde{X}}(-1))=\pi^*c_1(V)+(r-1)c_1( \ocal_{\tilde{X}}(1))\label{for:det}
\end{eqnarray}
and when $\rank V=2$, we have $$T_{\tilde{X}|X}=\pi^* \det V\otimes \ocal_{\tilde{X}}(2)\\$$
Since each fiber is isomorphic to $\CP^{r-1}$, which we denote by $F_x$ for $x\in X$, and the restriction of $\ocal_{\tilde{X}}(m)$ to each fiber is isomorphic to $\CP^{r-1}(m)$, the function $h^i_m(x)=h^i(F_x, \CP^{(r-1)}_x(m))$ is constant on $X$. By Grauert's theorem, the higher direct images $R^i\pi_*\ocal_{\tilde{X}}(m)$ of $\ocal_{\tilde{X}}(m)$ under the projection $\pi:\tilde{X}\rightarrow X$ are locally free on $X$ for $i\geq 0$.

 In particular, when $m\geq 0$,
\begin{eqnarray}\pi_*\ocal_{\tilde{X}}(m)=S^mV^* \\ R^i\pi_*\ocal_{\tilde{X}}(m)=0 \quad\fo \quad i\geq 1\end{eqnarray}

Starting with a directed manifold $(X,V)=(X_0,V_0)$, we get a tower of directed manifolds $(X_k,V_k)$, called Demailly-Semple $k$-jet bundle of $X$, defined by $X_k=\tilde{X}_{k-1},V_k=\tilde{V}_{k-1}$. In particular, when $X$ is a hypersurface in $\CP^3$, we start with $(X, T_X)$

From now on, we will use the following notations $$\pi_k:X_k\rightarrow X_{k-1},\qquad T_{k,k-1}=T_{X_k|X_{k-1}}$$  $$\ocal_k(1)=\ocal_{X_k}(1), \qquad u_k=c_1(\ocal_k(1))$$ $$\pi_{i,j}:X_i\rightarrow X_j,\quad i>j$$
Note that the Picard group of $X_k$ is given by $$\Pic (X_k)=\Pic (X_{k-1})\oplus \Z[\ocal_k(1)]$$
and the cohomology ring $H^{\bullet}(X_k)$ is given by
\begin{equation}
H^{\bullet}(X_k)=H^{\bullet}(X_{k-1})[\ocal_k(1)]/(\ocal_k(1)^2+c_1(V_{k-1})\ocal_k(1)+c_2(V_{k-1}))
\end{equation}
\begin{theo}\cite{de}\label{theo:de 2,1}
The direct image sheaf $(\pi_{k,0})_*\ocal_{X_k}(m)$ on $X$ coincides
with the (locally free) sheaf $E_{k,m}V^*$ of $k$-jet differentials of weighted degree $m$, that
is, by definition, the set of germs of polynomial differential operators
\begin{equation} Q(f)=\sum_{\alpha_1\cdots\alpha_k}a_{\alpha_1\cdots\alpha_k}(f)(f')^{\alpha_1}(f'')^{\alpha_2}\cdots(f^{(k)})^{\alpha_k}
\end{equation}
on $J_kV$ (in multi-index notation, ($f')^{\alpha_1}=
((f_1')^{\alpha_{1,1}}(f_2')^{\alpha_{1,2}}\cdots(f_r')^{\alpha_{1,r}}) $, which are
moreover invariant under arbitrary changes of parametrization: a germ of operator
$Q\in  E_{k,m}V^*$ is characterized by the condition that, for every germ $f\in J_kV$ and
every germ $\phi \in \G_k$ ,
$$Q(f\circ\phi)
=
\phi'^mQ(f)\circ\phi$$
\end{theo}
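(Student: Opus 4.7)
The plan is to identify $(\pi_{k,0})_*\ocal_{X_k}(m)$ with $E_{k,m}V^*$ fiberwise by constructing an evaluation map using Demailly's canonical lift. For any germ $f:(\C,0)\to X$ tangent to $V$, one inductively defines a lift $f_{[k]}:(\C,0)\to X_k$ by $f_{[0]}=f$ and $f_{[k]}(t)=(f_{[k-1]}(t),\,[f_{[k-1]}'(t)])$, which is regular wherever $f_{[k-1]}'(t)\neq 0$. First I would check that $f_{[k]}(0)$ depends only on the $k$-jet of $f$ and is invariant under composition with any $\phi\in\G_k$, because $\phi(0)=0$ and the direction $[\phi'(0)f'(0)]=[f'(0)]$ is preserved at every stage of the tower.

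Using the tautological inclusion $\ocal_{X_k}(-1)\hookrightarrow \pi_k^{*}V_{k-1}$ coming from \eqref{for:exact v}, the derivative $f_{[k-1]}'(0)$ lives canonically in the fiber of $\ocal_{X_k}(-1)$ at $f_{[k]}(0)$. For each local section $\sigma$ of $\ocal_{X_k}(m)$ I would then define
\[ Q_\sigma(f)\;=\;\lla\,\sigma(f_{[k]}(0)),\,f_{[k-1]}'(0)^{\otimes m}\,\rra. \]
Unfolding the inductive construction expresses $f_{[k]}(0)$ as rational functions of $(f'(0),\ldots,f^{(k)}(0))$, polynomial on the regular locus, so $Q_\sigma$ becomes a polynomial differential operator of weighted degree $m$. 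The transformation law $Q_\sigma(f\circ\phi)=\phi'^m\,Q_\sigma(f)\circ\phi$ is then immediate from the chain rule: $f_{[k-1]}'(0)$ picks up the factor $\phi'(0)$, which is raised to the $m$-th power in the pairing, while $f_{[k]}(0)$ is untouched by the $\G_k$-action.

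It remains to show that $\sigma\mapsto Q_\sigma$ is an isomorphism of sheaves on $X$. Injectivity follows from the fact that regular $k$-jets based at a given $x\in X$ sweep out a Zariski-dense subset of the fiber $\pi_{k,0}^{-1}(x)$, which is the geometric content of the iterated projectivization. For surjectivity, given an invariant $Q\in E_{k,m}V^*$, the identity $\sigma(f_{[k]}(0))\cdot f_{[k-1]}'(0)^{\otimes m}=Q(f)$ defines $\sigma$ unambiguously on the dense open stratum, precisely because the $\G_k$-invariance of $Q$ guarantees that both sides depend only on $f_{[k]}(0)$ up to the $\C^*$-scaling in $\ocal_{X_k}(-1)$.

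The main obstacle, as I see it, is extending $\sigma$ across the complement, where some $f_{[j-1]}'(0)$ vanishes and the lift map degenerates. I expect this to require an induction on $k$ showing that this "vertical" locus has codimension at least $2$ in each fiber of $\pi_{k,0}$, so that the section extends holomorphically by a Hartogs-type argument. Equivalently, one exhibits explicit local frames of $(\pi_{k,0})_*\ocal_{X_k}(m)$ built from Wronskian-type invariants of the components of $(f',\ldots,f^{(k)})$, and matches them to local generators of $E_{k,m}V^*$ by a dimension count compatible with the filtration analogous to \eqref{for:fil2} mentioned in the introduction.
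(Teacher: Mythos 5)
Your construction is, in outline, exactly Demailly's original argument (note the paper itself offers no proof of this statement --- it is imported from \cite{de} --- so Demailly's proof is the only benchmark): the canonical lift $f\mapsto f_{[k]}$ with the functoriality $(f\circ\phi)_{[k]}=f_{[k]}\circ\phi$ for $\phi\in\G_k$, the pairing of a section $\sigma$ of $\ocal_{X_k}(m)$ with $f_{[k-1]}'(t)^{\otimes m}$ through the tautological inclusion $\ocal_{X_k}(-1)\subset\pi_k^*V_{k-1}$, the resulting transformation law, and injectivity from density of the regular values in each fiber are all as in \cite{de}. (Define $Q_\sigma(f)(t)$ at every $t$, not just $t=0$, so that the claimed identity $Q_\sigma(f\circ\phi)=\phi'^m\,Q_\sigma(f)\circ\phi$ is an identity of germs.) One point you should make explicit: a priori $Q_\sigma$ is only holomorphic in the jet variables; that it is a \emph{polynomial} operator of weighted degree $m$ follows by applying the invariance to the rescalings $\phi_\la(t)=\la t\in\G_k$, which makes $Q_\sigma$ weighted-homogeneous of degree $m$ in $(f',\dots,f^{(k)})$, and a holomorphic function homogeneous in this weighted sense is a polynomial.

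The genuine gap is precisely where you locate the main obstacle, and the mechanism you propose fails: the locus of $X_k$ not reachable by lifts of regular germs has codimension one, not at least two. It is the union $\bigcup_{2\leq j\leq k}\pi_{k,j}^{-1}(D_j)$ with $D_j=\PP(T_{X_{j-1}|X_{j-2}})\subset\PP(V_{j-1})=X_j$; since $T_{X_{j-1}|X_{j-2}}$ has corank one in $V_{j-1}$, each $D_j$ is a divisor --- in the rank-two situation of this paper it is a section of the $\CP^1$-bundle $\pi_j$, so it meets every fiber of $\pi_{k,0}$ in a hypersurface. Hence no Hartogs-type extension can apply. Demailly crosses these divisors by other means: points of the vertical divisors are still of the form $f_{[k]}(0)$ with $f_{[k-1]}'(0)\neq 0$ for suitable \emph{singular} germs --- for instance $f(t)=(t^2,t^3)$ has $f_{[1]}'(0)$ a nonzero vertical vector, so $f_{[2]}(0)\in D_2$ while the pairing is nondegenerate --- and the identity $\lla\sigma(f_{[k]}(0)),f_{[k-1]}'(0)^{\otimes m}\rra=Q(f)$ then determines $\sigma$ there, consistently by the $\G_k$-invariance of $Q$ and holomorphically by choosing such germs in holomorphic families; equivalently one proves local boundedness of $\sigma$ near the divisor (using that $Q$ is polynomial, hence bounded on bounded families of jets) and applies the Riemann removable-singularity theorem across the hypersurface, for which boundedness is exactly what is needed, whereas Hartogs is unavailable. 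Your fallback sketch --- Wronskian-type local frames matched against local generators by a dimension count compatible with the filtration (\ref{for:fil2}) --- is not developed enough to substitute: an equality of ranks does not identify the subsheaf $\sigma\mapsto Q_\sigma$ with all of $E_{k,m}V^*$ without the extension step above.
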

On $X_2$, define the weighted line bundle $$\ocal_{X_2}(a_1, a_2)=\phi_{2*}(\ocal_{X_1}(a_1))\otimes \ocal_{X_2}( a_2)$$
The following lemma is part of lemma 3.3 in \cite{dee}
\begin{lem}\cite{dee}\label{lem:equal}Let $\rank V=2$,
for $m = a_1 + a_2 > 0$, there is an injection
$$(\pi_{2,0})_*(\ocal_{X_2}(a_1,a_2)) \rightarrow E_{2,m}V^* $$
and the injection is an isomorphism if $a_1-2a_2 < 0$.
\end{lem}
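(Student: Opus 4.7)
The plan is to build a natural morphism $\ocal_{X_2}(a_1, a_2) \to \ocal_2(m)$ directly on $X_2$ from the tautological structure of the Semple tower and then push it forward to $X$. Dualizing the exact sequence \eqref{for:exact v} (applied with $\tilde X = X_1$, $\tilde V = V_1$) produces an inclusion $\ocal_1(1) \hookrightarrow V_1^*$; composing the pullback of this inclusion with the canonical surjection $\pi_2^* V_1^* \twoheadrightarrow \ocal_2(1)$ on $X_2 = \PP(V_1)$ yields a nonzero sheaf map $s : \pi_2^* \ocal_1(1) \to \ocal_2(1)$, equivalently a section $s \in H^0(X_2, \pi_2^* \ocal_1(-1) \otimes \ocal_2(1))$. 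Taking the $a_1$-th tensor power of $s$ and tensoring with $\mathrm{id}_{\ocal_2(a_2)}$ gives
\[ \ocal_{X_2}(a_1, a_2) = \pi_2^* \ocal_1(a_1) \otimes \ocal_2(a_2) \longrightarrow \ocal_2(m). \]

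As a nonzero map of line bundles on the irreducible variety $X_2$, this map is injective, with cokernel $C$ supported on the effective divisor $a_1 D$, where $D = \{s=0\}$ is a section of $\pi_2 : X_2 \to X_1$. Applying the left-exact functor $(\pi_{2,0})_*$ to $0 \to \ocal_{X_2}(a_1, a_2) \to \ocal_2(m) \to C \to 0$ and invoking Theorem \ref{theo:de 2,1}, which identifies $(\pi_{2,0})_* \ocal_2(m)$ with $E_{2,m}V^*$, produces the claimed injection.

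For the sufficient condition $a_1 - 2a_2 < 0$ (and assuming $a_2 \geq 0$, so that $R^1(\pi_2)_* \ocal_{X_2}(a_1,a_2) = 0$ by the projection formula), the strategy is to push down in two stages $X_2 \xrightarrow{\pi_2} X_1 \xrightarrow{\pi_1} X$ and verify $(\pi_{2,0})_* C = 0$. Applying $(\pi_2)_*$ turns the map into the multiplication $\ocal_1(a_1) \otimes S^{a_2} V_1^* \to S^m V_1^*$ induced by $\ocal_1(1) \hookrightarrow V_1^*$, whose image should be exactly the filtered subsheaf $F_{a_1} \subset S^m V_1^*$ of monomials containing at least $a_1$ factors from $\ocal_1(1)$. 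Using the rank-$2$ identification $V_1^*/\ocal_1(1) = \pi_1^* \det V^* \otimes \ocal_1(-2)$, the quotient $Q = S^m V_1^* / F_{a_1}$ then carries a filtration whose graded pieces are $\ocal_1(3j-2m) \otimes \pi_1^*(\det V^*)^{m-j}$ for $0 \leq j \leq a_1 - 1$.

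The hypothesis $a_1 - 2a_2 < 0$ gives $3j - 2m \leq 3(a_1-1) - 2m = a_1 - 2a_2 - 3 < 0$ for every such $j$, so since $H^0(\PP^1, \ocal(k))$ vanishes for $k < 0$, the pushforward of each graded piece under $\pi_1$ is zero; hence $(\pi_1)_* Q = 0$, and therefore $(\pi_{2,0})_* C = 0$, which gives surjectivity on $X$. The main technical obstacle is verifying that the image of the multiplication map really is the full filtered piece $F_{a_1}$ rather than a proper subsheaf, so that the cokernel has exactly the described graded structure; once that identification is secured, the fibrewise $\PP^1$ vanishing closes the argument.
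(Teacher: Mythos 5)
The paper itself gives no proof of this lemma: it is imported verbatim from Lemma~3.3 of \cite{dee}. Judged on its own terms, your argument is essentially correct and reconstructs the standard mechanism behind the cited result. Your section $s$ is the dual of the composition $\ocal_2(-1)\hookrightarrow\pi_2^*V_1\to\pi_2^*\ocal_1(-1)$; its zero divisor is the section $D=\PP(T_{X_1|X})\subset\PP(V_1)=X_2$, so your map realizes $\ocal_{X_2}(a_1,a_2)$ as $\ocal_2(m)\otimes\ocal_{X_2}(-a_1D)\subset\ocal_2(m)$, and left-exactness of $(\pi_{2,0})_*$ together with Theorem \ref{theo:de 2,1} gives the injection. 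Moreover, the step you flag as the ``main technical obstacle'' is in fact routine: choose a local frame $(e_1,e_2)$ of $V_1^*$ with $e_1$ generating the line subbundle $\ocal_1(1)$; fibrewise, multiplication by $e_1^{a_1}$ in $\C[e_1,e_2]$ is injective with image exactly the span of monomials of $e_1$-degree at least $a_1$, so the image of $\ocal_1(a_1)\otimes S^{a_2}V_1^*\to S^mV_1^*$ is precisely the subbundle $F_{a_1}$, and the quotient is locally free with exactly the graded pieces $\ocal_1(3j-2m)\otimes\pi_1^*(\det V^*)^{\otimes(m-j)}$, $0\leq j\leq a_1-1$, that you list. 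Your two-stage pushforward then closes correctly: $a_2>0$ gives $R^1\pi_{2*}\ocal_{X_2}(a_1,a_2)=0$, so $\pi_{2*}C=Q$, and $\pi_{1*}Q=0$ forces $(\pi_{2,0})_*\ocal_{X_2}(a_1,a_2)\xrightarrow{\;\sim\;}E_{2,m}V^*$.

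Three remarks. First, your construction of the injection tacitly assumes $a_1\geq 0$ (for $a_1<0$ the tensor power of $s$ maps in the opposite direction), whereas the statement only assumes $m>0$; this is harmless for the paper, which only invokes the lemma with $a_1=2p\geq 0$, and for $a_1<0$ the reversed inclusion $\ocal_2(m)\hookrightarrow\ocal_{X_2}(a_1,a_2)$ pushes forward to an isomorphism by the same graded computation, but as written your proof does not cover that case. Second, your added hypothesis $a_2\geq 0$ is not an extra assumption in the isomorphism case: $m>0$ and $a_1<2a_2$ together force $a_2>0$. Third, a genuine bonus of your route: you only need $3(a_1-1)-2m<0$, i.e.\ $a_1-2a_2\leq 2$, so you prove the isomorphism under a strictly weaker condition than $a_1-2a_2<0$ --- and this is actually needed by the paper, since Proposition \ref{prop:fil} applies the lemma to $(a_1,a_2)=(2p,p+q)$ with $q=0$, where $a_1-2a_2=0$ and the lemma as literally stated does not apply.
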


\begin{prop}\label{prop:fil}Similar to the filtration of $E_{2,m}T_X^*$(formula \ref{for:fil2}) for $\dim X=2$, the relative case $E_{2,m}V^*$ when $\rank V=2$ also has a filtration
\begin{equation}
\label{for:fil2j}0\rightarrow S^mV^*\rightarrow E_{2,m}V^*\rightarrow E_{2,m-3}V^*\otimes \det V^*\rightarrow 0
\end{equation}
\end{prop}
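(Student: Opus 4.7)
The plan is to adapt Demailly's classical proof of the filtration (\ref{for:fil2}) to the relative setting, where $V \subset T_X$ has rank $2$; the key device is the Wronskian
\[
W(f) \;=\; f'_1 f''_2 - f'_2 f''_1,
\]
written in a local frame $(e_1,e_2)$ of $V$ with $\xi = (f'_1, f'_2)$ and $\eta = (f''_1, f''_2)$ the induced coordinates on $J_2V|_U$. First I would recall (or verify directly from the $\G_2$-action on $J_2 V$) the classical structural result that in any such trivialisation
\[
E_{2,m} V^*|_U \;=\; \bigoplus_{k=0}^{\lfloor m/3 \rfloor} W^k \cdot S^{m-3k} V^*|_U,
\]
i.e.\ the $\G_2$-invariant polynomials in $(\xi,\eta)$ of weighted degree $m$ are freely generated over $S^{\bullet} V^*|_U$ by the single extra generator $W$ of weight $3$. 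This invariant-theoretic fact is the main obstacle; its proof reduces (by using the $a_1$-weight to isolate the weighted degree) to computing the invariants of the unipotent shift $\eta \mapsto \eta + \lambda \xi$ that appears in $\G_2$ when $a_1 = 1$.

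Next I would track how $W$ depends on the frame. Under $e' = Ae$ one has $\xi' = A^{-T}\xi$ and $\eta' = A^{-T}\eta$, and a direct computation gives $W' = (\det A)^{-1} W$. This is precisely the transition rule of $E_{2,3}V^* \otimes \det V$, so the local Wronskians glue into a well-defined global section
\[
W \in H^0\bigl(X,\, E_{2,3}V^* \otimes \det V\bigr).
\]
Multiplication by this global section then produces a globally defined $\ocal_X$-linear map
\[
\mu_W \;\colon\; E_{2,m-3}V^* \otimes \det V^* \;\longrightarrow\; E_{2,m} V^*,
\]
while the subsheaf of polynomials in $\xi$ alone — the $k = 0$ summand above — is frame-independent and yields a global inclusion $\iota \colon S^m V^* \hookrightarrow E_{2,m}V^*$.

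Finally, from the local decomposition one reads off immediately that $\iota$ and $\mu_W$ are both injective, that their images intersect only in zero, and that together they span all of $E_{2,m}V^*$. This assembles into the desired short exact sequence
\[
0 \longrightarrow S^m V^* \stackrel{\iota}{\longrightarrow} E_{2,m} V^* \longrightarrow E_{2,m-3} V^* \otimes \det V^* \longrightarrow 0,
\]
the quotient map being division by $W$ after discarding the $S^m V^*$-part. A purely geometric alternative, worth recording, would be to push the tautological filtration on $S^m V_1^* = (\pi_2)_* \ocal_{X_2}(m)$ down to $X$ along $\pi_1 \colon X_1 \to X$ using $0 \to \ocal_1(1) \to V_1^* \to \ocal_1(-2) \otimes \pi_1^* \det V^* \to 0$: the top piece $\ocal_1(m) \subset S^m V_1^*$ pushes forward to $S^m V^*$, but identifying the resulting cokernel with $E_{2,m-3} V^* \otimes \det V^*$ seems to require either relative Serre duality on the $\PP^1$-fibres of $\pi_1$ or an induction on $m$, which is why I prefer the algebraic route through $W$.
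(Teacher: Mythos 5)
Your main route has a genuine gap at the gluing step. The transition rule $W' = (\det A)^{-1}W$ is only correct when the frame change $A$ is \emph{constant}; for a frame varying over the base, differentiating $f'(t) = \sum_i u_i(t)\, e_i(f(t))$ shows that the second-jet coordinate transforms inhomogeneously, $\eta' = A^{-T}\eta + B(\xi,\xi)$, where $B$ is built from the derivatives of $A^{-T}$ contracted with $f'$, so that in fact
\begin{equation*}
W' \;=\; (\det A)^{-1} W \;+\; C(\xi),
\end{equation*}
with $C$ a cubic form in $\xi$. Consequently $W$ is \emph{not} a global section of $E_{2,3}V^*\otimes\det V$ (it only defines a section of $(E_{2,3}V^*/S^3V^*)\otimes \det V\cong \ocal_X$, namely the canonical trivialization), multiplication by $W$ is not a globally defined map, and the decomposition $E_{2,m}V^* = \bigoplus_k W^k\, S^{m-3k}V^*$ holds only chart by chart. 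Indeed, if your argument were correct, the sequence (\ref{for:fil2j}) would split globally, giving $E_{2,m}V^*\cong\bigoplus_k S^{m-3k}V^*\otimes(\det V^*)^{\otimes k}$, which is false in general: the whole content of (\ref{for:fil2}) and (\ref{for:fil2j}) is that one has a filtration, not a direct sum. What survives of your computation is this: the subsheaf $S^mV^*$ of operators not involving $f''$ is well defined (the inhomogeneous term preserves polynomials in $\xi$, as you note), and since the correction $C(\xi)$ has $W$-order zero, the filtration by $W$-order is chart-independent with graded pieces $S^{m-3k}V^*\otimes(\det V^*)^{\otimes k}$. But identifying the full quotient $E_{2,m}V^*/S^mV^*$ with $E_{2,m-3}V^*\otimes\det V^*$ (and not merely matching graded pieces) requires showing that ``division by $W$'' is chart-independent modulo the sub, which your write-up does not supply.

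Ironically, the ``purely geometric alternative'' you relegate to your last sentence is essentially the paper's proof, and the obstacle you fear there disappears. The paper writes $m = 3p+q$ with $0\leq q\leq 2$, invokes lemma \ref{lem:equal} to identify $E_{2,m}V^* = (\pi_{2,0})_*\ocal_{X_2}(2p,\,p+q)$, twists the Euler-type sequence $0\to\ocal_{X_1}(1)\to V_1^*\to T_{X_1|X}^*\to 0$ by $\ocal_{X_1}(2p)$ and substitutes $T_{X_1|X}^* = \pi_1^*\det V^*\otimes\ocal_{X_1}(-2)$ to obtain
\begin{equation*}
0\to \ocal_{X_1}(3p+q)\to S^{p+q}V_1^*\otimes\ocal_{X_1}(2p)\to S^{p-1+q}V_1^*\otimes\ocal_{X_1}(2p-2)\otimes\pi_1^*\det V^*\to 0,
\end{equation*}
then pushes forward along $\pi_1$: the vanishing $R^1\pi_{1*}\ocal_{X_1}(3p+q)=0$ keeps the sequence exact, $\pi_{1*}\ocal_{X_1}(3p+q)=S^mV^*$ gives the sub, and the cokernel is identified as $E_{2,m-3}V^*\otimes\det V^*$ by the \emph{same} lemma \ref{lem:equal} applied at level $m-3 = 3(p-1)+q$, since the quotient bundle is again in the normal form $\ocal_{X_2}(2(p-1),\,(p-1)+q)$. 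No relative Serre duality and no induction on $m$ is needed. I would recommend either carrying out this pushforward argument, or repairing your Wronskian route by working throughout with the filtration by $W$-order rather than a splitting.
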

\begin{proof}
Write $m=3p+q$ for $0\leq q\leq 2$, then by lemma \ref{lem:equal}, $$E_{2,m}V^*=(\pi_{2,0})_*(\ocal_{X_2}(2p,p+q))$$
On the other hand, we have $(\pi_{2,1})_*(\ocal_{X_2}(2p,p+q))=S^{p+q}V_1^*\otimes \ocal_{X_1}(2p)$. By the exact sequence $$0\rightarrow \ocal_{X_1}(1)\rightarrow V_1^*\rightarrow T_{X_1|X}^*\rightarrow 0$$, we get exact sequence $$0\rightarrow \ocal_{X_1}(p+q)\rightarrow S^{p+q}V_1^*\rightarrow S^{p-1+q}V_1^*\otimes T_{X_1|X}^*\rightarrow 0$$, since $T_{X_1|X}^*=\pi_{1}^*\det V^*\otimes \ocal_{X_1}(-2)$, we get exact sequence
$$0\rightarrow \ocal_{X_1}(3p+q)\rightarrow S^{p+q}V_1^*\otimes \ocal_{X_1}(2p)\rightarrow S^{p-1+q}V_1^*\otimes \ocal_{X_1}(2p-2)\rightarrow 0$$
Since $R^1\pi_{1*}( \ocal_{X_1}(3p+q))=0$, pushing forward the exact sequence above, we get the claimed filtration.
\end{proof}
The following theorem as introduced in the introduction forms the foundation of our efforts.
\begin{theo}[\cite{de}] \label{theo:ggde}
Assume that there exist integers $k,m > 0$ and an ample line
bundle $L$ on $X$ such that $H^0(P^kV,\ocal_{P^kV} (m)\otimes \pi_{k,0}^*L^-1)\simeq H^0(X,E_{k,m}(V^*)\otimes L^{-1})$
has non zero sections $\sigma_1,\cdots, \sigma_N$. Let $Z \subset P_kV$ be the base locus of these sections.
Then every entire curve $f : \C \rightarrow X$ tangent to $V$ is such that $f^{[k]}(C)\subset Z$. In
other words, for every global $\G_k$-invariant polynomial differential operator $P$ with
values in $L^{-1}$, every entire curve $f$ must satisfy the algebraic differential equation
$P(f) = 0$.
\end{theo}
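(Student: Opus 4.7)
The plan is to adapt the Bloch--Brody--Ahlfors argument from the classical Green-Griffiths setting to the $\G_k$-invariant framework on the Demailly-Semple tower. The essential new feature exploited here is that $\G_k$-invariance permits the whole entire curve $f : \C \to X$---including points where $f'$ vanishes---to lift canonically to $X_k$.

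First I would build the canonical $k$-jet lift $f^{[k]} : \C \to X_k$ by induction on the tower. Given $f^{[j]} : \C \to X_j$ with derivative in $V_j$ (which holds for $j=0$ by the hypothesis that $f$ is tangent to $V$), the direction $[(f^{[j]})'(t)]$ is well-defined in $\PP(V_j)$ away from the zeros of $(f^{[j]})'$ and extends holomorphically across them by passing to the first nonvanishing Taylor coefficient. Setting $f^{[j+1]}(t) = (f^{[j]}(t), [(f^{[j]})'(t)])$ produces a curve in $X_{j+1}$ whose derivative again lies in $V_{j+1}$, so the induction runs up to $j = k$. Each section $\sigma_i \in H^0(X_k, \ocal_{X_k}(m) \otimes \pi_{k,0}^* L^{-1})$ then pulls back via $f^{[k]}$ to a holomorphic section of $(f^{[k]})^* \ocal_{X_k}(m) \otimes f^* L^{-1}$ over $\C$; under Theorem~\ref{theo:de 2,1} this pullback is exactly $Q_i(f)$, where $Q_i$ is the corresponding $\G_k$-invariant polynomial differential operator of weighted degree $m$ with values in $L^{-1}$.

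The hard part is the Ahlfors--Schwarz step that forces $Q_i(f) \equiv 0$. Equip $L$ with a smooth hermitian metric $h_L$ of strictly positive curvature (possible by ampleness) and $\ocal_{X_k}(m)$ with any smooth metric. A curvature computation shows that, with the correct weighting dictated by the $\G_k$-action, the expression $\|Q_i(f)(t)\|^{2/m}$ defines a singular pseudo-metric on $\C$ whose curvature on its positive locus is bounded above by $-c\,f^*\omega_L$ for some $c > 0$. Applying the Ahlfors--Schwarz lemma on discs $\De_R \subset \C$ and letting $R \to \infty$ then forces this pseudo-metric to vanish identically, since $\C$ supports no pseudo-metric with uniformly negatively curved positive locus. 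Hence $\sigma_i \circ f^{[k]} \equiv 0$ for each $i$, and therefore $f^{[k]}(\C) \subset \bigcap_i \{\sigma_i = 0\} = Z$.

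The technical crux lies in this last curvature estimate: one must express the successive derivatives of $Q_i(f)$ intrinsically on $X_k$ so as to avoid differentiating $f$ more than $k$ times. The tautological surjection $V_j \twoheadrightarrow \ocal_{X_j}(-1)$ coming from~\eqref{for:exact v} is the key tool here, since it realizes $(f^{[j-1]})'(t)$ as a section of $(f^{[j]})^*\ocal_{X_j}(-1)$ and thereby encodes all $k$ successive derivatives of $f$ in a form compatible with the Ahlfors--Schwarz comparison, which is what ultimately converts the ampleness of $L$ into the forced vanishing of $Q_i(f)$.
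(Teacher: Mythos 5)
A preliminary remark: the paper contains no proof of this statement at all --- it is quoted verbatim from Demailly [De95] as the foundation for the rest of the article --- so your proposal can only be measured against Demailly's original argument, which is indeed the Ahlfors--Schwarz strategy you outline. Your first two steps are correct and standard: the canonical lift $f^{[k]}$ extends across the zeros of the derivatives exactly as you say, and by Theorem \ref{theo:de 2,1} the pullback of $\sigma_i$ along $f^{[k]}$, contracted with $((f^{[k-1]})')^{\otimes m}$ via the inclusion $\ocal_k(-1)\subset \pi_k^*V_{k-1}$, is the invariant operator $Q_i(f)$ with values in $f^*L^{-1}$.

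The genuine gap is in your Ahlfors--Schwarz step, and it sits exactly where the real content of the theorem lies. The Ahlfors--Schwarz lemma needs a \emph{self-referential} curvature bound for the pseudometric $\gamma=\|Q_i(f)\|^{2/m}\,|dt|^2$, namely $\frac{i}{2\pi}\partial\bar\partial\log\gamma\geq\epsilon\,\gamma$ (Gaussian curvature $\leq-\epsilon<0$); only then does comparison with the Poincar\'e metric of $\De_R$ and the limit $R\to\infty$ force $\gamma\equiv 0$. What positivity of $L$ and the Lelong--Poincar\'e equation actually give is your stated bound, curvature $\leq -c\,f^*\omega_L$, and this is strictly weaker: to convert it into the self-referential bound you would need the pointwise comparison $f^*\omega_L\geq c'\gamma$, i.e.\ $\|f'(t)\|^2\geq c'\|Q_i(f)(t)\|^{2/m}$, which is false, since $Q_i(f)$ involves $f'',\dots,f^{(k)}$ and these are not pointwise controlled by $f'$. (Already for the invariant Wronskian $W=f_1'f_2''-f_2'f_1''$ of weighted degree $3$, taking $f=(t,\sin(Nt)/N)$ makes $|W|^{2/3}=N^{2/3}$ at points where $\|f'\|^2=1$; $\G_k$-invariance does not rescue a pointwise inequality.) Without that comparison, all one can extract from your computation is that $\log\|Q_i(f)\|^2$ is subharmonic on $\C$, which is no contradiction. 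Demailly closes the loop by choosing a different pseudometric: the sections $\sigma_j$, together with a positively curved metric on $L$, induce a \emph{singular} metric $h_k$ on $\ocal_k(-1)$ whose degeneration set is the base locus $Z$, and one applies Ahlfors--Schwarz to $\gamma(t)=\|(f^{[k-1]})'(t)\|^2_{h_k}$, where $(f^{[k-1]})'(t)$ is viewed as a section of $(f^{[k]})^*\ocal_k(-1)$ via the surjection in (\ref{for:exact v}). Because $\gamma$ is measured in the same metric whose curvature current enters the Lelong--Poincar\'e formula, the required inequality $\frac{i}{2\pi}\partial\bar\partial\log\gamma\geq\epsilon\gamma$ holds by construction (this is Demailly's ``negative jet curvature'' condition), and the conclusion $\gamma\equiv 0$ says precisely $f^{[k]}(\C)\subset Z$. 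Alternatively, the theorem can be proved by Nevanlinna theory via the logarithmic derivative lemma, which controls the higher derivatives in an averaged rather than pointwise sense. Your final paragraph names the tautological surjection as ``the key tool'' but never extracts from it the self-dominating curvature inequality; that inequality is the missing essential step, not a technicality.
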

\section{semistablity and restriction theorem}\label{sec:semistable}
Let $\mathfrak{F}$ be a torsion-free coherent sheaf over a compact K\"{a}hler manifold $(M,\omega)$, let $c_1(\mathfrak{F})$ be the first Chern class of $\mathfrak{F}$. The $\omega-$degree of $\mathfrak{F}$ is defined to be $$\deg(\mathfrak{F})=\int_Mc_1(\mathfrak{F})\wedge \omega^{n-1}$$
The degree/rank ratio $\mu(\mathfrak{F})$ is defined to be$$\mu(\mathfrak{F})=\frac{\deg(\mathfrak{F})}{\rank (\mathfrak{F})}.$$
Recall that $\mathfrak{F}$ is $\omega-$semistableif for every coherent subsheaf $\mathfrak{F}'$, $0<\rank  \mathfrak{F}'$, we have $$\mu(\mathfrak{F})\leq \mu(\mathfrak{F}')$$
Two basic theorems of semistable vector bundles are as follows:
\begin{prop}[\cite{ko}]\label{theo:ko}
If $\mathfrak{F}$ is a $\omega$-semistable sheaf over a compact K\"{a}hler manifold $M$ such that $\deg(\mathfrak{F})<0$, then $\mathfrak{F}$ admits no nonzero holomorphic section.

\end{prop}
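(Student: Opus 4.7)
The plan is to argue by contradiction. I assume there is a nonzero global section $s\in H^0(M,\mathfrak{F})$ and aim to produce a rank one subsheaf of $\mathfrak{F}$ whose slope forces $\deg(\mathfrak{F})\geq 0$, contradicting the hypothesis. The obvious candidate is the image of the sheaf map $\ocal_M\to \mathfrak{F}$ sending $1\mapsto s$; equivalently, one can work with the subsheaf $\ocal_M\cdot s\subset\mathfrak{F}$ generated locally by multiples of $s$.

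The first key step is to show that this map is in fact injective, so that $\ocal_M$ sits inside $\mathfrak{F}$ as a coherent rank one subsheaf. This is exactly where torsion-freeness of $\mathfrak{F}$ is used: if $f$ is a nonzero local holomorphic function, then $f$ is a nonzero divisor on any stalk of $\mathfrak{F}$, so $f\cdot s=0$ in $\mathfrak{F}$ would force $s=0$; hence the kernel of $\ocal_M\to\mathfrak{F}$ vanishes. Granted this, we have a subsheaf $\ocal_M\hookrightarrow\mathfrak{F}$ with $\rank\ocal_M=1>0$ and slope
\[
\mu(\ocal_M)=\int_M c_1(\ocal_M)\wedge\om^{n-1}=0.
\]

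Now I invoke the semistability hypothesis. Using the standard slope inequality $\mu(\mathfrak{F}')\leq \mu(\mathfrak{F})$ valid for every nontrivial coherent subsheaf $\mathfrak{F}'\subset\mathfrak{F}$, specialization to $\mathfrak{F}'=\ocal_M$ gives $0=\mu(\ocal_M)\leq\mu(\mathfrak{F})=\deg(\mathfrak{F})/\rank(\mathfrak{F})$. Since $\rank(\mathfrak{F})>0$, this forces $\deg(\mathfrak{F})\geq 0$, contradicting the assumption $\deg(\mathfrak{F})<0$. Therefore no nonzero section can exist.

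The main subtlety in executing the proof is the injectivity of $\ocal_M\to\mathfrak{F}$: one must invoke torsion-freeness of $\mathfrak{F}$ rather than working with the image as a potentially non-saturated subsheaf. Beyond that the argument is entirely formal, being a one-line application of the slope inequality once the rank one subsheaf $\ocal_M\subset\mathfrak{F}$ has been produced.
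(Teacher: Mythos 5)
Your proof is correct, and it is precisely the standard argument from Kobayashi's book \cite{ko}: the paper itself states this proposition as a citation and gives no proof, so there is nothing different to compare against — a nonzero section yields (via torsion-freeness, plus connectedness of $M$ to promote the stalk-level vanishing $f\cdot s=0\Rightarrow s_x=0$ to global vanishing, since $\ocal_M\cdot s$ would be a torsion subsheaf of $\mathfrak{F}$ supported on a proper analytic set) an injection $\ocal_M\hookrightarrow\mathfrak{F}$, and then $0=\mu(\ocal_M)\leq\mu(\mathfrak{F})<0$ is a contradiction. One caution: the paper's printed definition of $\omega$-semistability has the slope inequality reversed — it writes $\mu(\mathfrak{F})\leq\mu(\mathfrak{F}')$ for every coherent subsheaf $\mathfrak{F}'$ of positive rank — which is a typo; you implicitly used the correct standard convention $\mu(\mathfrak{F}')\leq\mu(\mathfrak{F})$ (the one actually used in \cite{ko}), and you should note that with the paper's literal inequality your specialization to $\mathfrak{F}'=\ocal_M$ would only give $\mu(\mathfrak{F})\leq 0$, so the argument (and indeed the proposition) would not go through as stated.
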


\begin{prop}[\cite{ko}]\label{theo:ko2}
Let $\mathfrak{F}$ be a torsion free coherent over a compact K\"{a}hler manifold $(M, g)$.Then

 (a)  Let $\mathfrak{L}$ be a line bundle over $M$. Then $\mathfrak{F}\otimes \mathfrak{L}$ is $\omega$-semistable if and only if $\mathfrak{F}$ is $\omega$-semistable.

 (b)$\mathfrak{F}$ is $\omega$-semistable if and only if its dual $\mathfrak{F}^*$ is $\omega$-semistable
\end{prop}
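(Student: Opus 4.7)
The statement to prove is Proposition~\ref{theo:ko2}, which has two parts: (a) invariance of $\omega$-semistability under twisting by a line bundle, and (b) invariance under dualization. My plan is to reduce each part to a pair of elementary facts about slopes together with a dictionary on sub/quotient sheaves.

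First I would record two slope identities. If $\mathfrak{L}$ is a line bundle and $\mathfrak{F}$ has rank $r$, then $c_1(\mathfrak{F}\otimes\mathfrak{L})=c_1(\mathfrak{F})+r\,c_1(\mathfrak{L})$, so dividing by $r$ yields
\[
\mu(\mathfrak{F}\otimes\mathfrak{L})=\mu(\mathfrak{F})+\deg(\mathfrak{L}).
\]
Dualization reverses the first Chern class (for torsion-free sheaves on a manifold, $c_1$ is computed in codimension $1$ where $\mathfrak{F}$ is locally free), so
\[
\mu(\mathfrak{F}^*)=-\mu(\mathfrak{F}).
\]

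For part (a), the observation is that $\mathfrak{L}$ is locally free, hence flat, so the functor $(\,\cdot\,)\otimes\mathfrak{L}$ is exact and induces a bijection between coherent subsheaves $\mathfrak{F}'\subset\mathfrak{F}$ and coherent subsheaves $\mathfrak{F}'\otimes\mathfrak{L}\subset\mathfrak{F}\otimes\mathfrak{L}$, with inverse given by tensoring with $\mathfrak{L}^{-1}$; ranks are preserved, and the slope formula above shifts both $\mu(\mathfrak{F}')$ and $\mu(\mathfrak{F})$ by the same amount $\deg(\mathfrak{L})$. Hence the inequality $\mu(\mathfrak{F}')\le\mu(\mathfrak{F})$ is equivalent to $\mu(\mathfrak{F}'\otimes\mathfrak{L})\le\mu(\mathfrak{F}\otimes\mathfrak{L})$, and (a) follows.

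For part (b), I would first reformulate semistability dually: $\mathfrak{F}$ is $\omega$-semistable if and only if every torsion-free quotient $\mathfrak{Q}$ with $0<\rank\mathfrak{Q}<\rank\mathfrak{F}$ satisfies $\mu(\mathfrak{Q})\ge\mu(\mathfrak{F})$. This follows from $\rank\mathfrak{F}=\rank\mathfrak{F}'+\rank\mathfrak{Q}$ and $\deg\mathfrak{F}=\deg\mathfrak{F}'+\deg\mathfrak{Q}$ applied to the short exact sequence $0\to\mathfrak{F}'\to\mathfrak{F}\to\mathfrak{Q}\to 0$ (with $\mathfrak{F}'$ the saturation of the given subsheaf, which has the same slope lower bound). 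Then given a coherent subsheaf $\mathfrak{G}\subset\mathfrak{F}^*$, dualize the injection and combine with the natural map $\mathfrak{F}\to\mathfrak{F}^{**}$ to produce a torsion-free quotient $\mathfrak{F}\twoheadrightarrow\mathfrak{Q}$ whose reflexive hull is $\mathfrak{G}^*$; since $c_1$ is a codimension-$1$ invariant, $\deg\mathfrak{Q}=-\deg\mathfrak{G}$ and $\rank\mathfrak{Q}=\rank\mathfrak{G}$, so $\mu(\mathfrak{Q})=-\mu(\mathfrak{G})$. Semistability of $\mathfrak{F}$ then translates the quotient inequality $\mu(\mathfrak{Q})\ge\mu(\mathfrak{F})$ into $\mu(\mathfrak{G})\le\mu(\mathfrak{F}^*)$, which is exactly semistability of $\mathfrak{F}^*$, and the same argument in reverse yields the converse.

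The main obstacle I foresee is the bookkeeping around reflexive hulls in part (b): dualizing is not exact, so one has to argue that passing to double duals and taking saturations does not change either the rank or the degree of the relevant sub/quotient sheaf. This is handled by the standard fact that a torsion-free coherent sheaf on a compact K\"ahler manifold agrees with its reflexive hull outside a subvariety of codimension at least $2$, so both $\rank$ and $c_1$ are unaffected; once this is granted, parts (a) and (b) are just the two slope identities combined with the sub/quotient dictionary.
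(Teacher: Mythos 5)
The paper does not actually prove this proposition --- it is quoted from Kobayashi's book [Ko] without proof --- so your proposal can only be measured against the standard argument, which is exactly what you give, and it is essentially correct. Part (a) is complete as written: tensoring by $\mathfrak{L}$ is an exact, rank-preserving bijection on coherent subsheaves, and it shifts every slope by the same amount $\deg\mathfrak{L}$. In part (b) the skeleton (the quotient reformulation of semistability together with $\mu(\mathfrak{F}^*)=-\mu(\mathfrak{F})$, both valid because $c_1$ of a torsion-free sheaf is computed in codimension $1$) is the right one; note also that the direction you dismiss as ``the same argument in reverse'' is in fact the easier of the two, since the dual of a surjection $\mathfrak{F}\twoheadrightarrow\mathfrak{Q}$ is a genuine injection $\mathfrak{Q}^*\hookrightarrow\mathfrak{F}^*$ and no reflexive-hull bookkeeping arises there at all.

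One imprecision needs repair in the harder direction. Your closing claim that taking saturations ``does not change either the rank or the degree'' is false as stated: if $\mathfrak{G}\subset\mathfrak{F}^*$ is not saturated, the torsion of $\mathfrak{F}^*/\mathfrak{G}$ may be supported on a divisor, and then $\deg\mathfrak{G}^{\mathrm{sat}}>\deg\mathfrak{G}$ strictly. Relatedly, the cokernel of $\mathfrak{F}^{**}\to\mathfrak{G}^*$ embeds in $\mathcal{E}xt^1(\mathfrak{F}^*/\mathfrak{G},\ocal_M)$, which is supported in codimension $\geq 2$ only once $\mathfrak{F}^*/\mathfrak{G}$ is torsion-free, i.e.\ only after you have saturated $\mathfrak{G}$; without that step your equality $\deg\mathfrak{Q}=-\deg\mathfrak{G}$ is not justified and should read $\deg\mathfrak{Q}=-\deg\mathfrak{G}^{\mathrm{sat}}\leq-\deg\mathfrak{G}$. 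Fortunately every discrepancy points the favorable way: semistability of $\mathfrak{F}$ yields $\mu(\mathfrak{G}^{\mathrm{sat}})\leq\mu(\mathfrak{F}^*)$, and $\mu(\mathfrak{G})\leq\mu(\mathfrak{G}^{\mathrm{sat}})$, so the desired inequality survives and the proof closes after this one-line fix. Finally, you implicitly corrected the paper's displayed definition of semistability, in which the inequality $\mu(\mathfrak{F})\leq\mu(\mathfrak{F}')$ is an evident typo for $\mu(\mathfrak{F}')\leq\mu(\mathfrak{F})$; the convention you use is the intended one, and is the one consistent with how Propositions \ref{theo:ko} and \ref{theo:ko2} are applied later in the paper.
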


If we have an ample line bundle $H$, we can define $H-$semistability just to be $\omega_H-$semistability, where $\omega_H$ is a positive $(1,1)-$form in the first Chern class of $H$. And this definition can be generalized to a big and nef line bundle. The following theorem is due to Tsuji
\begin{theo}[\cite{ts}]\label{theo:ts}
Let $X$ be a smooth minimal algebraic variety over $\C$. Then the tangent bundle $T_X$ is $K_X$-semistable.
\end{theo}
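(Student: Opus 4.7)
The plan is to prove Tsuji's semistability theorem via a Kähler--Einstein metric argument, invoking the Donaldson--Uhlenbeck--Yau / Kobayashi correspondence between Hermite--Einstein metrics and polystability. Using Proposition \ref{theo:ko2}(b), the $K_X$-semistability of $T_X$ is equivalent to that of $\Omega_X^1$, so I would dualize and show that every torsion-free quotient $Q$ of $\Omega_X^1$ satisfies $\mu_{K_X}(Q) \geq \mu_{K_X}(\Omega_X^1) = K_X^n / n$, where $n = \dim X$. Since the polarization is $K_X$ itself, this is not a generic semi-positivity statement but a sharp slope bound.

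For the baseline case where $K_X$ is ample, I would invoke Aubin--Yau: there exists a smooth Kähler--Einstein metric $\omega_{KE}$ in $c_1(K_X)$ with $\operatorname{Ric}(\omega_{KE}) = -\omega_{KE}$. The associated Hermitian metric on $T_X$ is Hermite--Einstein with respect to $\omega_{KE}$, so Kobayashi's theorem gives that $T_X$ is $\omega_{KE}$-polystable, hence $K_X$-semistable. This is essentially Yau's classical theorem.

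For the general minimal case, $K_X$ is only nef. When $X$ is of general type (so $K_X$ is big and nef), I would use the singular Kähler--Einstein metric constructed by Tsuji (later refined by Eyssidieux--Guedj--Zeriahi): a closed positive current $\omega_{KE}$ in $c_1(K_X)$ solving $\operatorname{Ric}(\omega_{KE}) = -\omega_{KE}$, smooth on a Zariski open set $U \subset X$ where $K_X$ is strictly positive. On $U$ this induces a Hermite--Einstein connection on $T_X|_U$. A Kobayashi--Lübke--type integral identity applied to the second fundamental form of a putative destabilizing saturated subsheaf $\mathcal{F} \subset T_X$ then yields $\mu_{K_X}(\mathcal{F}) \leq \mu_{K_X}(T_X)$, provided one can control the behavior across $X \smallsetminus U$. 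When $K_X$ is nef but not big, I would approximate by the ample classes $K_X + \epsilon H$ for $H$ ample, verify semistability with respect to each such polarization using the big/nef KE argument applied in the mixed class (or combined with Miyaoka-style generic semi-positivity of $\Omega_X^1$ that follows from pseudo-effectivity of $K_X$), and pass to the limit $\epsilon \to 0$ using Proposition \ref{theo:ko2}(a) together with continuity of slopes.

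The main obstacle is the analytic step across the singular locus: the classical Kobayashi--Lübke argument requires a smooth Hermite--Einstein metric, and extending it to the singular KE setting demands careful $L^2$ and pluripotential-theoretic control of the second fundamental form near the augmented base locus of $K_X$. A secondary difficulty is the non-big minimal case, where the singular KE construction does not apply directly, the polarization $K_X$ sits on the boundary of the big cone, and the perturbation/limit argument must be checked to preserve the slope inequality in the limit.
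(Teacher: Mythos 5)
This theorem is imported by the paper from Tsuji's article [Ts] with no internal proof, so there is nothing in the paper to compare against line by line; the relevant benchmark is Tsuji's own argument, and your outline is in fact a reconstruction of exactly that route: Kähler--Einstein (or approximate Kähler--Einstein) metrics plus a Kobayashi--L\"ubke--type slope inequality, with Aubin--Yau handling the case $K_X$ ample. Note also that for the purposes of this paper the full strength of the theorem is never used: the application is to smooth hypersurfaces of degree $d\geq 5$ in $\CP^3$, where $K_X=\ocal_X(d-4)$ is ample, so your first case (Aubin--Yau plus Kobayashi's theorem that Hermite--Einstein implies polystable) already suffices and is complete as stated.

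As a proof of the general statement, however, your proposal has two genuine gaps. First, the perturbation step is wrong as formulated: there is no K\"ahler--Einstein metric in the class $c_1(K_X+\epsilon H)$, since $\Ric(\omega)=-\omega$ forces $[\omega]=c_1(K_X)$. One must instead solve the twisted equation $\Ric(\omega_\epsilon)=-\omega_\epsilon+\epsilon\theta_H$ (possible by Aubin--Yau since $K_X+\epsilon H$ is ample), which makes the induced metric on $T_X$ only \emph{approximately} Hermite--Einstein; consequently you do not get semistability with respect to each polarization $K_X+\epsilon H$, but only slope inequalities with an $O(\epsilon)$ error term, and the correct strategy --- which is Tsuji's --- is to prove, for each fixed subsheaf $\mathcal{F}\subset T_X$, the estimate $\mu_\epsilon(\mathcal{F})\leq\mu_\epsilon(T_X)+C\epsilon$ and let $\epsilon\to 0$; ``verify semistability with respect to each such polarization'' is an intermediate goal that would fail. (Your parenthetical appeal to Miyaoka-style generic semipositivity has the same defect: it yields $\mu(Q)\geq 0$ for quotients, which matches the required bound only in the limit class where $K_X^n=0$, not for the ample perturbations.) Second, the step you yourself flag --- the integral Chern--Weil estimate on the second fundamental form of a destabilizing subsheaf across the locus where the singular KE metric degenerates, with the cutoff and $L^2$ control needed to justify it --- is precisely the analytic heart of Tsuji's paper, and in your write-up it is named as an obstacle rather than carried out. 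So what you have is a correct roadmap faithful to the cited source, but with the two hardest steps deferred rather than proved.
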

\begin{rem}In particular, when $X$ is a hypersurface of degree $d\geq 5$ in $\CP^3$, $T_X$ is $K_X$-semistable. By proposition \ref{theo:ko2}, this also implies that $T_X^*=\Omega_X^1$ is $K_X$-semistable.
\end{rem}
When $M$ is a curve, the K\"{a}hler form $\omega$ does not appear in the definition of semistability, therefore we have absolute semistability.

The following restriction theorem is crucial in our estimations.
\begin{theo}[\cite{f}]\label{theo:f}
Let $X$ be a $n$-dimensional normal projective subvariety in $\PP^n$over the
algebraically closed field $k$ of characteristic 0. Let $\xi$ be a semistable torsion free $\ocal _X$-module of rank $r$ and $e$,$c$ integers, $1\leq c\leq n-1$, such that
$$\frac{\left(\begin{array}{ccc}n+e\\e\end{array}\right)-ce-1}{e}>\deg(X)\max (\frac{r^2-1}{4},1).$$
Then for a general complete intersection $T=H_1\cap\cdots\cap H_c,\quad H_i\in |\ocal_X(e)|$, the restriction $\xi|_Y:=\xi\otimes \ocal_Y$ is semistable on $Y$.
\end{theo}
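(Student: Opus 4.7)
\emph{Plan.} I would prove this restriction theorem by contradiction, descending a generic destabilizing subsheaf on the complete intersection $Y$ back to a destabilizing subsheaf of $\xi$ on $X$.

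\emph{Setup and parametrization.} Suppose, to the contrary, that $\xi|_Y$ fails to be semistable for $Y$ ranging over a dense Zariski-open subset $U \subset |\ocal_X(e)|^c$. After stratifying $U$, I may assume the maximal destabilizing subsheaf $\fcal_Y \subset \xi|_Y$ has constant rank $r'$ with $1 \le r' \le r-1$, constant Hilbert polynomial, and $\mu(\fcal_Y) > \mu(\xi|_Y)$. Form the universal complete intersection $\mcal Y \to U$ and the relative Quot scheme $\qcal \to U$ parametrizing saturated subsheaves of $\xi|_{\mcal Y}$ with the prescribed Hilbert polynomial. The assignment $Y \mapsto \fcal_Y$ picks out an irreducible component $\qcal_0 \subset \qcal$ that dominates $U$.

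\emph{Descent step.} The heart of the argument is to show that the numerical hypothesis forces $\qcal_0$ to arise by restriction from a single subsheaf $\fcal \subset \xi$ on $X$. This is a dimension count. One has $\dim U = c\bigl(\binom{n+e}{e}-1\bigr)$, whereas the fiber of $\qcal \to U$ modulo subsheaves already defined on $X$ is controlled by Bogomolov-type discriminant estimates applied to sheaves on surface sections of $X$ obtained by cutting with additional general members of $|\ocal_X(e)|$. The discriminant bound carries the factor $(r^2-1)/4$ (via $\Delta(\xi) = 2rc_2(\xi) - (r-1)c_1(\xi)^2 \ge 0$ for semistable $\xi$ on a surface), and tracking these estimates shows that any genuine variation of $\fcal_Y$ with $Y$ would force
$$\frac{\binom{n+e}{e}-ce-1}{e} \leq \deg(X)\max\!\Bigl(\frac{r^2-1}{4},\,1\Bigr),$$
contradicting the hypothesis. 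Hence $\qcal_0$ is constant up to restriction, producing a global $\fcal \subset \xi$ with $\fcal|_Y = \fcal_Y$ for generic $Y$.

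\emph{Conclusion and main obstacle.} Writing $H = c_1(\ocal_X(1))$ so that the class of $Y$ is $e^c H^c$, the projection formula gives $\deg_{H|_Y}(\fcal|_Y) = e^c \deg_H(\fcal)$ and similarly for $\xi$, so $\mu(\fcal|_Y) > \mu(\xi|_Y)$ transports to $\mu_H(\fcal) > \mu_H(\xi)$ on $X$, contradicting semistability of $\xi$. The chief obstacle is the descent step: one must combine a Grothendieck-style boundedness result for the relative Quot scheme (to guarantee that the parameter space $\qcal_0$ is of finite type and admits a dimension estimate) with a sharp Bogomolov inequality on surface cuts of $X$ (to bound how $\fcal_Y$ can deform with $Y$). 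The interplay between these two inputs is precisely what dictates the explicit form of the numerical hypothesis, and making this pairing effective is the technical core of the proof.
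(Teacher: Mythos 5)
First, a point of comparison that matters here: the paper offers no proof of this statement at all --- it is Flenner's restriction theorem, quoted from [F] and used as a black box --- so your sketch can only be measured against Flenner's published argument, and there it diverges at the decisive point. Your setup (maximal destabilizing subsheaf $\fcal_Y \subset \xi|_Y$, constancy of rank and Hilbert polynomial over a stratum, a universal family over $U$) and your closing step (the projection formula transporting $\mu(\fcal|_Y) > \mu(\xi|_Y)$ to $\mu(\fcal) > \mu(\xi)$, contradicting semistability of $\xi$) are standard and correct. One small remark on the setup: since the maximal destabilizing subsheaf is \emph{unique}, the Quot-scheme/component-selection apparatus you import from Mehta--Ramanathan is not where the difficulty lies; the family $\{\fcal_Y\}$ is already canonically defined over the generic point of $U$.

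The genuine gap is the descent step, which you reduce to an unproved slogan. Two concrete problems. (i) Even granting that your component $\qcal_0 \to U$ is ``constant up to restriction,'' that at best yields a subsheaf of $p^*\xi$ on the incidence variety $Z = \{(x,Y) : x \in Y\} \subset X \times U$; producing $\fcal \subset \xi$ on $X$ itself requires descending along $p : Z \to X$, whose fibres are the linear systems of complete intersections through a point, and this descent is exactly the crux. In Flenner's proof the obstruction to it is an explicit homomorphism from the relative tangent sheaf $T_{Z/X}$, restricted to a fibre $Y$ of $q : Z \to U$, into $\mathcal{H}om(\fcal_Y, (\xi/\fcal)|_Y)$, and the numerical hypothesis is spent precisely on killing this homomorphism by slope estimates in the style of the Grauert--M\"ulich theorem: $T_{Z/X}|_Y$ is controlled by $H^0(X,\ocal_X(e))$ --- which is where $\binom{n+e}{e}$ enters --- while the factor $\max\bigl(\frac{r^2-1}{4},1\bigr)$ arises from an elementary optimization over the ranks and slope gaps of the Harder--Narasimhan factors of the restriction, \emph{not} from the discriminant inequality. (ii) Your proposed mechanism --- a fibre-dimension count for $\qcal \to U$ played against Bogomolov's inequality $\Delta(\xi) \geq 0$ on surface cuts --- belongs to a different lineage of restriction theorems (Bogomolov's theorem for stable bundles on surfaces and Langer's effective refinements), whose numerical bounds have a genuinely different shape, typically involving $\Delta(\xi)$ itself rather than only $\deg(X)$ and $r$; you give no estimate of the fibres of $\qcal_0 \to U$, no bound on how $\fcal_Y$ can deform, and no derivation connecting either to the displayed inequality. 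As written, the sentence ``tracking these estimates shows\dots'' \emph{is} the entire proof, and it is missing; to repair the sketch you would replace the dimension count by the vanishing of the $T_{Z/X}$-obstruction via Grauert--M\"ulich-type slope bounds, which is Flenner's actual route.
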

\begin{rem}
When $X$ is a hypersurface of degree $d\geq 5$ in $\CP^3$, this theorem implies that for $e\geq 2d$, general curve $Y\in |\ocal_X(e)|$, the restriction $\Omega_X^1|_Y$ is semistable
\end{rem}
\section{The Third Semple Jet Bundle}\label{sec:3sem}
\subsection{Euler Characteristic}
Since we want to study $\ocal_3(1)$ on $X_3$, we denote $$\ocal_3(a_1,a_2,a_3)=\pi^*_{3,1}\ocal_1(a_1)\otimes \pi^*_3\ocal_2(a_2)\otimes \ocal_3(a_3),\qquad \ocal_3(a_2, a_3)=\pi^*_3\ocal_2(a_2)\otimes \ocal_3(a_3))$$

Since we have injection $\ocal_3(b,1)\rightarrow \ocal_3(1+b)$, where $b>0$, to show that $\ocal_3(1)$ is big, it suffices to show that $\ocal_3(b,1)$ is big.

In the following, we will first show that $\ocal_3(2,1)$ is big for $d\geq 12$. For $d=11$, we will modify the method for $\ocal_3(2,1)$ by allowing $b$ varying from in the interval $[2,\infty)$. One can always consider $b$ as a rational number so that $\ocal_3(b,1)$ make sense as $\Q-$line bundle.

\smallskip

First we calculate the Euler characteristic of $\ocal_3(2n,n)$. Since the fiber of the projection $\pi_3$ is $\CP^1$, the restriction of $\ocal_3(2n,n)$ is $\ocal_{\CP^1}(n) $, and for $n>0$, $H^1(\CP^1,O(n))=0$, so we have $$R^i\pi_{3*}\ocal_3(2n,n)=0, \qquad i\geq 1$$therefore $$H^i(X_3,\ocal_3(2n,n))=H^i(X_2,\pi_{3*}\ocal_3(2n,n)), i\geq 0$$ in particular $$\chi(\ocal_3(2n,n))=\chi(\pi_{3*}\ocal_3(2n,n))$$

It is clear that $\pi_{3*}\ocal_3(2n,n)=S^nV_2^*\otimes \ocal_2(2n)$. From the exact sequence $$0\rightarrow \ocal_2(1)\rightarrow V_2^* \rightarrow T_{2,1}^* \rightarrow 0$$ we see that $S^nV_2^*$ has a filtration with graded bundle $$Gr^{\bullet}(S^nV_2^*)=\bigoplus_{0\leq k \leq n}\ocal_2(n-k)\otimes (T_{2,1}^*)^{\otimes k}$$
Therefore, by substituting $T_{2,1}^*=\pi_2^*\det V_1^*\otimes \ocal_2(-2)$, $S^nV_2^*\otimes \ocal_2(2n)$ has a filtration with graded bundle $$Gr^{\bullet}(S^nV_2^*\otimes \ocal_2(2n))=\bigoplus_{0\leq k \leq n}\ocal_2(3n-3k)\otimes \pi_2^*(\det V_1^*)^{\otimes k}$$

We thus have the following formula for the Euler characteristic of $\ocal_3(2n,n)$:
\begin{equation}
\chi(\ocal_3(2n,n))=\sum_{k=0}^n\chi(\ocal_2(3n-3k)\otimes \pi_2^*(\det V_1^*)^{\otimes k})
\end{equation}
To calculate $\chi(\ocal_2(3n-3k)\otimes \pi_2^*(\det V_1^*)^{\otimes k})$, we push it to $X_1$. Since $\ocal_2(3n-3k)\otimes \pi_2^*(\det V_1^*)^{\otimes k}$ restricted to the fiber of $\pi_2:X_2\rightarrow X_1$ is $\ocal_{\CP^1}(3n-3k)$ and $3n-3k\geq 0$, we have $$R^i\pi_{2*}(\ocal_2(3n-3k)\otimes \pi_2^*(\det V_1^*)^{\otimes k})=0 , \quad i\geq 1$$ therefore $$H^i(X_2,\ocal_2(3n-3k)\otimes \pi_2^*(\det V_1^*)^{\otimes k})=H^i(X_1,\pi_{2*}(\ocal_2(3n-3k)\otimes \pi_2^*(\det V_1^*)^{\otimes k})),\quad i\geq 0$$
in particular $$\chi(\ocal_2(3n-3k)\otimes \pi_2^*(\det V_1^*)^{\otimes k})=\chi(\pi_{2*}(\ocal_2(3n-3k)\otimes \pi_2^*(\det V_1^*)^{\otimes k}))$$
Again, $\pi_{2*}(\ocal_2(3n-3k)\otimes \pi_2^*(\det V_1^*)^{\otimes k})=S^{3n-3k}V_1^*\otimes (\det V_1^*)^{\otimes k}$
and $S^{3n-3k}V_1^*$ has a filtration with graded bundle $$Gr^{\bullet}(S^{3n-3k}V_1^*)=\bigoplus_{0\leq l \leq 3n-3k}\ocal_1(3n-3k-l)\otimes (T_{1,0}^*)^{\otimes l}$$
Plugging in the equations $$T_{1,0}^*=\pi_1^*K_X\otimes \ocal_1(-2), \qquad \det V_1^*=\pi_1^*K_X\otimes \ocal_1(-1)$$
we get \begin{equation}
Gr^{\bullet}(S^{3n-3k}V_1^*\otimes (\det V_1^*)^{\otimes k})=\bigoplus_{0\leq l \leq 3n-3k}\ocal_1(3n-4k-3l)\otimes \pi_1^*(K_X)^{\otimes (k+l)}
\end{equation}
So we have the following equation $$\chi(\pi_{2*}(\ocal_2(3n-3k)\otimes \pi_2^*(\det V_1^*)^{\otimes k}))=\sum_{l=0}^{3n-3k}\chi(\ocal_1(3n-4k-3l)\otimes \pi_1^*(K_X)^{\otimes (k+l)})$$
In conclusion of the above analysis, we have
\begin{prop}
\begin{equation}\label{for:chi3}
\chi(\ocal_3(2n,n))=\sum_{k=0}^n\sum_{l=0}^{3n-3k}\chi(\ocal_1(3n-4k-3l)\otimes \pi_1^*(K_X)^{\otimes (k+l)})
\end{equation}

\end{prop}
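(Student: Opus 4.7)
The strategy is to descend the tower $X_3 \xrightarrow{\pi_3} X_2 \xrightarrow{\pi_2} X_1 \xrightarrow{\pi_1} X$ one step at a time, at each stage pushing forward (so that $\chi$ is preserved) and then decomposing the resulting symmetric power by its tautological filtration. Everything needed for this is already assembled in Section \ref{sec:basic}: the relative Euler sequences $0\to \ocal_i(1)\to V_i^*\to T_{i,i-1}^*\to 0$, the identifications $T_{2,1}^*=\pi_2^*\det V_1^*\otimes \ocal_2(-2)$, $T_{1,0}^*=\pi_1^*K_X\otimes \ocal_1(-2)$, $\det V_1^*=\pi_1^*K_X\otimes \ocal_1(-1)$, and the vanishing $R^i\pi_{k*}\ocal_k(m)=0$ for $i\geq 1$ and $m\geq 0$.

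First I would handle the top step. Since the fibers of $\pi_3$ are $\CP^1$ and $\ocal_3(2n,n)$ restricts to $\ocal_{\CP^1}(n)$ on each fiber with $n\geq 0$, the higher direct images vanish, giving $\chi(\ocal_3(2n,n))=\chi(S^nV_2^*\otimes \ocal_2(2n))$. Applying the Euler sequence on $X_2$, the bundle $S^nV_2^*$ admits a filtration with graded pieces $\ocal_2(n-k)\otimes (T_{2,1}^*)^{\otimes k}$; substituting the formula for $T_{2,1}^*$ and twisting by $\ocal_2(2n)$ produces graded pieces $\ocal_2(3n-3k)\otimes \pi_2^*(\det V_1^*)^{\otimes k}$. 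Additivity of Euler characteristic on short exact sequences reduces $\chi(\ocal_3(2n,n))$ to a sum over $k$ of the $\chi$ of these pieces.

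The second descent is structurally identical. Each graded piece restricts to $\ocal_{\CP^1}(3n-3k)$ on the fibers of $\pi_2$, and $3n-3k\geq 0$, so again $R^i\pi_{2*}=0$ for $i\geq 1$ and $\chi$ equals $\chi(S^{3n-3k}V_1^*\otimes (\det V_1^*)^{\otimes k})$. Filtering $S^{3n-3k}V_1^*$ with the Euler sequence on $X_1$ and substituting the formulas for $T_{1,0}^*$ and $\det V_1^*$ gives graded pieces $\ocal_1(3n-4k-3l)\otimes \pi_1^*(K_X)^{\otimes(k+l)}$. Summing Euler characteristics over both indices $k$ and $l$ yields formula \eqref{for:chi3}. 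There is no real obstacle in this argument; it is essentially bookkeeping of twists, and the only conditions that must be checked are the two nonnegativity hypotheses $n\geq 0$ and $3n-3k\geq 0$ that power the vanishing of higher direct images at each stage.
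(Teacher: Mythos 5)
Your proposal is correct and matches the paper's own argument essentially step for step: both push forward along $\pi_3$ and then $\pi_2$ using the vanishing of higher direct images (justified by the nonnegativity of the fiberwise degrees $n$ and $3n-3k$), filter the resulting symmetric powers $S^nV_2^*$ and $S^{3n-3k}V_1^*$ via the relative Euler sequences, substitute the identities for $T_{2,1}^*$, $T_{1,0}^*$ and $\det V_1^*$, and conclude by additivity of $\chi$ on the graded pieces. There is nothing to correct.
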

From now on, by abusing of notations, we identify a cohomology class in $H^{\bullet}(X_j)$ with its image under $\pi_{i,j}^*$ in $H^{\bullet}(X_i)$ for $i>j$.

To simplify notation we write $L_{n,k,l}=\ocal_1(3n-4k-3l)\otimes \pi_1^*(K_X)^{\otimes (k+l)}$.  We also write $c_1=c_1(X)$ and $c_2=c_2(X)$, so

$$c_1(L_{n,k,l})=(3n-4k-3l)u_1-(k+l)c_1$$

Now we use Hirzebruch-Riemann-Roch formula to calculate $\chi(L_{n,k,l})$.

For any line bundle $L$ on $X_1$, the Hirzebruch-Riemann-Roch formula is
$$\chi(L)=\frac{1}{6}c_1^3(L)+\frac{1}{4}c_1^2(L)c_1(X_1)+\frac{1}{12}c_1(L)(c_1^2(X_1)+c_2(X_1))+\frac{1}{24}c_1(X_1)c_2(X_1)$$ Since to show that $\ocal_3(2n,n)$ is big, we need to show that the coeffecient of $n^5$ in $H^0(X_3,\ocal_3(2n,n))$ is positive, and observe that the only contribution to the coefficient of $n^5$ of $\chi(\ocal_3(2n,n))$ after summing up is from the first term in the preceding formula, we only need to calculate the first term
\begin{eqnarray}
c_1^3(L_{n,k,l})&=&((3n-4k-3l)u_1-(k+l)c_1)^3\\&=&(3n-4k-3l)^3u_1^3-3(3n-4k-3l)^2(k+l)u_1^2c_1\\&+&3(3n-4k-3l)(k+l)^2u_1c_1^2-(k+l)^3c_1^3
\end{eqnarray}
Plugging the following equations
\begin{eqnarray}
u_1^3=c_1^2-c_2&,&\qquad u_1^2c_1=-c_1^2\\
u_1c_1^2=c_1^2&,&\qquad c_1^3=0
\end{eqnarray}

We get
\begin{eqnarray}
c_1^3(L_{n,k,l})&=&(3n-4k-3l)^3(c_1^2-c_2)+3(3n-4k-3l)^2(k+l)c_1^2\\&+&3(3n-4k-3l)(k+l)^2c_1^2
\end{eqnarray}
Therefore
\begin{eqnarray}
\chi(\ocal_3(2n,n))&=&\sum_{k=0}^n\sum_{l=0}^{3n-3k}\frac{1}{6}[(3n-4k-3l)^3(c_1^2-c_2)+3(3n-4k-3l)^2(k+l)c_1^2\\&+&3(3n-4k-3l)(k+l)^2c_1^2]+O(n^4)\\&=&n^5(\frac{249}{60}c_2-c_1^2)+O(n^4)
\end{eqnarray}
\begin{theo}\label{theo:chi}
\begin{equation}
\chi(\ocal_3(2n,n))=n^5(\frac{249}{60}c_2-c_1^2)+O(n^4)
\end{equation}
\end{theo}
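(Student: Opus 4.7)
The plan is to apply Hirzebruch--Riemann--Roch to each line bundle $L_{n,k,l}$ appearing in formula~(\ref{for:chi3}) and to isolate the $n^5$ term after the double sum. Because $c_1(L_{n,k,l})=(3n-4k-3l)u_1-(k+l)c_1$ is linear in the parameters $n,k,l$, the only HRR term that is cubic in these parameters is $\tfrac16 c_1^3(L_{n,k,l})$; after summing over an index set of size $O(n^2)$, this is the sole contribution to the $n^5$ coefficient, and every other HRR term contributes only $O(n^4)$ and may be absorbed into the error.

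The first task is then to convert $c_1^3(L_{n,k,l})$ into an honest number. I would deduce the intersection numbers on $X_1=\PP(T_X)$ listed in the text from the Grothendieck relation $u_1^2+c_1 u_1+c_2=0$ together with the projection formula $(\pi_1)_*u_1=1$, using along the way the vanishing $c_1 c_2=0$ that holds on the surface $X$ for degree reasons (this is also what forces $c_1^3=0$ on $X_1$). Expanding
\[
c_1(L_{n,k,l})^3=\bigl((3n-4k-3l)u_1-(k+l)c_1\bigr)^3
\]
and substituting the intersection numbers yields the expression already written in the paper, namely $a^3(c_1^2-c_2)+3a^2 b\,c_1^2+3ab^2\,c_1^2$ with $a=3n-4k-3l$ and $b=k+l$.

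The remaining step is to perform the double sum over $0\le k\le n$, $0\le l\le 3n-3k$. Since only the leading coefficient of $n^5$ is wanted, each sum may be replaced by the corresponding integral with error $O(n^4)$, and the three resulting polynomial integrals of $a^3$, $a^2 b$, and $ab^2$ over the triangle are routine. The coefficient of $c_2$ comes purely from the $a^3$ integral, since $c_2$ enters only through $c_1^2-c_2$; after dividing by $6$ and flipping sign this should produce exactly $\tfrac{249}{60}$. The coefficient of $c_1^2$ is then assembled from all three integrals and should collapse to $-1$. The only obstacle is bookkeeping arithmetic; no further geometric input beyond the intersection numbers and the vanishing of $c_1 c_2$ on $X$ is needed.
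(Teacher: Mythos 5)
Your proposal is correct and follows the paper's own proof essentially step for step: Hirzebruch--Riemann--Roch applied to each $L_{n,k,l}$ from formula (\ref{for:chi3}), the observation that since $c_1(L_{n,k,l})$ is linear in $n,k,l$ only the $\frac{1}{6}c_1^3(L_{n,k,l})$ term can contribute to the $n^5$ coefficient after summing over the $O(n^2)$ index set, substitution of the intersection numbers $u_1^3=c_1^2-c_2$, $u_1^2c_1=-c_1^2$, $u_1c_1^2=c_1^2$, $c_1^3=0$, and evaluation of the double sum to leading order. Your derivation of those intersection numbers from the Grothendieck relation together with the degree-reason vanishing of $c_1c_2$ and $c_1^3$ on the surface $X$ merely fills in details the paper states without proof, and the resulting coefficients $\frac{249}{60}$ for $c_2$ and $-1$ for $c_1^2$ check out.
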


We now introduce semistablity of vector bundles, which will be crucial to estimate

$H^0(X_3, \ocal_3(2n,n))$.

\subsection{Estimation of $H^2$}
Now that we know $\chi(\ocal_3(2n,n))$, since $\chi=h^0-h^1+h^2-h^3+h^4-h^5$, to show that $h^0(X_3, \ocal_3(2n,n))$ has positive coefficient in $n^5$, we need to calculate $h^2(X_3, \ocal_3(2n,n))$ and $h^4(X_3, \ocal_3(2n,n))$.

Using again the filtration of $\pi_{3*}\ocal_3(2n,n)$, we see that
\begin{eqnarray}
h^2(X_3,\ocal_3(2n,n))=h^2(X_2,\pi_{3*}\ocal_3(2n,n))\leq \sum_{k=0}^nh^2(X_2, \ocal_2(3n-3k)\otimes \pi_2^*(\det V_1^*)^{\otimes k})\\
h^4(X_3,\ocal_3(2n,n))=h^4(X_2,\pi_{3*}\ocal_3(2n,n))\leq \sum_{k=0}^nh^4(X_2, \ocal_2(3n-3k)\otimes \pi_2^*(\det V_1^*)^{\otimes k})
\end{eqnarray}
Since $\dim X_1=3$, we have $$h^4(X_2,\ocal_2(3n-3k)\otimes \pi_2^*(\det V_1^*)^{\otimes k})=h^4(X_1,\pi_{2*}(\ocal_2(3n-3k)\otimes \pi_2^*(\det V_1^*)^{\otimes k}))=0$$
\smallskip
Now we need to calculate $h^2(X_2, \ocal_2(3n-3k)\otimes \pi_2^*(\det V_1^*)^{\otimes k})$

Pushing forward onto $X_1$, we have $$h^2(X_2, \ocal_2(3n-3k)\otimes \pi_2^*(\det V_1^*)^{\otimes k})=h^2(X_1, S^{3n-3k}V_1^*\otimes (\det V_1^*)^{\otimes k})$$
From the filtration of $S^{3n-3k}V_1^*\otimes (\det V_1^*)^{\otimes k}$ we showed above, we see that
\begin{equation}
h^2(X_1, S^{3n-3k}V_1^*\otimes (\det V_1^*)^{\otimes k})\leq \sum_{l=0}^{3n-3k}h^2(X_1, \ocal_1(3n-4k-3l)\otimes \pi_1^*(K_X)^{\otimes (k+l)})
\end{equation}
To calculate $h^2(X_1, \ocal_1(3n-4k-3l)\otimes \pi_1^*(K_X)^{\otimes (k+l)})$, we push them further onto $X$, then we have two situations
$$\pi_{1*}\ocal_1(3n-4k-3l)\otimes \pi_1^*(K_X)^{\otimes (k+l)}=\{\begin{array}{clcr}0& 3n-4k-3l< 0\\S^{3n-4k-3l}T_X^*\otimes (k+l)K_X& 3n-4k-3l\geq 0 \end{array} $$
\textbf{\textit{Case 1.}} when $3n-4k-3l\geq -1$, $R^i\pi_{1*}\ocal_1(3n-4k-3l)\otimes \pi_1^*(K_X)^{\otimes (k+l)}=0$ for $i\geq 1$, we have $$
h^2(X_1, \ocal_1(3n-4k-3l)\otimes \pi_1^*(K_X)^{\otimes (k+l)})=h^2(X, S^{3n-4k-3l}T_X^*\otimes (k+l)K_X), \quad 3n-4k-3l\geq 0
$$
and $$h^2(X_1, \ocal_1(3n-4k-3l)\otimes \pi_1^*(K_X)^{\otimes (k+l)})=0,\quad 3n-4k-3l=-1$$
First we show the following theorem
\begin{theo}
When $3n-4k-3l\geq 0$ and $k+l\geq 2$, or $3n-4k-3l\geq 1$ and $k+l\geq 1$, we have
\begin{equation}
h^2(X, S^{3n-4k-3l}T_X^*\otimes (k+l)K_X)=0
\end{equation}
\end{theo}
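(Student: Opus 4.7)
The plan is to use Serre duality to convert the stated $H^2$ vanishing into an $H^0$ vanishing, and then invoke Kobayashi's vanishing theorem (Proposition \ref{theo:ko}) applied to a $K_X$-semistable sheaf of negative $K_X$-degree.

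First, setting $m=3n-4k-3l$ and $s=k+l$ for brevity, Serre duality on the surface $X$ yields
\[
h^2\bigl(X,\,S^m T_X^*\otimes sK_X\bigr)\;=\;h^0\bigl(X,\,S^m T_X\otimes (1-s)K_X\bigr).
\]
Second, I will appeal to Tsuji's theorem (Theorem \ref{theo:ts}): since $d\geq 5$ the canonical bundle $K_X$ is ample and $T_X$ is $K_X$-semistable. Using the classical fact that in characteristic zero symmetric powers preserve semistability, $S^m T_X$ is $K_X$-semistable, and tensoring with the line bundle $(1-s)K_X$ again preserves $K_X$-semistability by Proposition \ref{theo:ko2}(a).

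Third, a slope computation finishes the argument. For any rank-two bundle $E$ one has $c_1(S^m E)=\tfrac{m(m+1)}{2}c_1(E)$ and $\rank S^m E=m+1$, giving $\mu(S^m T_X)=\tfrac{m}{2}c_1(T_X)=-\tfrac{m}{2}K_X$. Hence
\[
\mu_{K_X}\bigl(S^m T_X\otimes (1-s)K_X\bigr)=\Bigl(1-s-\tfrac{m}{2}\Bigr)K_X^2,
\]
which (since $K_X^2>0$) is strictly negative exactly when $m+2s>2$. In the hypothesis $m\geq 0,\, s\geq 2$ this reads $m+2s\geq 4$, and in the hypothesis $m\geq 1,\, s\geq 1$ it reads $m+2s\geq 3$, so in both cases the bundle has negative $K_X$-degree. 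Proposition \ref{theo:ko} then forces $h^0\bigl(X,S^m T_X\otimes (1-s)K_X\bigr)=0$, and the theorem follows by Serre duality.

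The main obstacle I anticipate is the appeal to the preservation of semistability under symmetric powers; in characteristic zero this can be justified via the Kobayashi--Hitchin correspondence, since the K\"ahler--Einstein metric on $X$ makes $T_X$ Hermite--Einstein and tensor constructions of Hermite--Einstein bundles remain Hermite--Einstein. Everything else is a routine slope calculation matched carefully to the two case hypotheses so that the boundary cases ($m=0,\ s=2$ and $m=1,\ s=1$) are not missed.
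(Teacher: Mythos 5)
Your proof is correct and takes essentially the same route as the paper: Serre duality to turn the $H^2$ into an $H^0$, Tsuji's theorem for $K_X$-semistability, a slope/degree computation showing negativity under both hypotheses ($m+2s>2$), and Kobayashi's vanishing (Proposition \ref{theo:ko}). If anything you are more careful than the paper, which cites only Proposition \ref{theo:ko2} and leaves implicit the fact that symmetric powers preserve semistability in characteristic zero --- a point you rightly flag and justify via the Hermite--Einstein argument.
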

\begin{proof}
By Serre duality theorem, we have $$h^2(X, S^{3n-4k-3l}T_X^*\otimes (k+l)K_X)=h^0(X, (S^{3n-4k-3l}T_X^*\otimes (k+l-1)K_X)^*)$$
Since $K_X$ is ample, by theorem \ref{theo:ts} and proposition \ref{theo:ko2}, $S^{3n-4k-3l}T_X^*\otimes (k+l-1)K_X$ is $K_X$-semistable. By assumption $\deg (S^{3n-4k-3l}T_X^*\otimes (k+l-1)K_X)^*<0$, therefore by proposition \ref{theo:ko} and \ref{theo:ko2} $$h^0(X, (S^{3n-4k-3l}T_X^*\otimes (k+l-1)K_X)^*)=0$$
\end{proof}

\textbf{\textit{Case 2.}} When $3n-4k-3l\leq -2$, by the Leray spectral sequence of the projection $\pi_1:X_1\rightarrow X$,
\begin{equation}
h^2(X_1, \ocal_1(3n-4k-3l)\otimes \pi_1^*(K_X)^{\otimes (k+l)})=h^1(X, R^1\pi_{1*}(\ocal_1(3n-4k-3l)\otimes \pi_1^*(K_X)^{\otimes (k+l)}))
\end{equation}
\begin{prop}\label{for:R1} For $m\geq 2$,
$$R^1\pi_*(\ocal_1(-m))=(-K_X)\otimes S^{m-2}T_X$$
\end{prop}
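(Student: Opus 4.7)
The plan is to compute $R^1\pi_{1*}\ocal_1(-m)$ by invoking relative Serre duality on the $\CP^1$-bundle $\pi_1: X_1 = \PP(T_X) \to X$. First, I compute the relative dualizing sheaf. Specializing the second exact sequence of Section \ref{sec:basic} to $r=2$ and $V=T_X$ yields $T_{X_1|X} = \pi_1^*\det T_X \otimes \ocal_1(2)$; since $\det T_X = K_X^{-1}$ for the surface $X$, dualizing gives
$$\omega_{X_1/X} \;=\; T_{X_1|X}^{-1} \;=\; \ocal_1(-2) \otimes \pi_1^* K_X.$$

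Next, relative Serre duality along the $\CP^1$-fibers asserts that for any line bundle $L$ on $X_1$,
$$R^1\pi_{1*} L \;\cong\; \bigl(\pi_{1*}(L^{-1}\otimes \omega_{X_1/X})\bigr)^{*}.$$
Taking $L=\ocal_1(-m)$ and invoking the projection formula gives
$$R^1\pi_{1*}\ocal_1(-m) \;\cong\; \bigl(\pi_{1*}\ocal_1(m-2) \otimes K_X\bigr)^{*}.$$
For $m \geq 2$, Section \ref{sec:basic} yields $\pi_{1*}\ocal_1(m-2) = S^{m-2}T_X^*$, so dualizing produces $S^{m-2}T_X \otimes K_X^{-1}$, which is $(-K_X)\otimes S^{m-2}T_X$ in the additive notation used in the statement.

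The only step that requires more than routine bookkeeping is the application of relative Serre duality, but for a smooth $\CP^1$-bundle this is standard. Should one wish to avoid that machinery, one can argue directly: the fiberwise Serre duality $H^1(\CP^1,\ocal(-m)) \cong H^0(\CP^1,\ocal(m-2))^*$ combined with Grauert's theorem shows that $R^1\pi_{1*}\ocal_1(-m)$ is locally free of rank $m-1$ with fibers naturally identified with the dual of $S^{m-2}T_X^*\otimes K_X$, after which identifying the global twist reduces to a check in a local trivialization of $T_X$. I anticipate no substantive obstacle beyond choosing the cleanest presentation.
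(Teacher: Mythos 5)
Your proposal is correct and follows essentially the same route as the paper: both identify the relative dualizing sheaf of $\pi_1$ as $T_{X_1|X}^* = \ocal_1(-2)\otimes\pi_1^*K_X$ via the Euler sequence and then apply Serre duality along the $\CP^1$-fibers, reducing the computation to $\pi_{1*}\ocal_1(m-2)=S^{m-2}T_X^*$. The only difference is packaging: the paper proves the needed duality by hand --- establishing $R^1\pi_{1*}T_{X_1|X}^*=\ocal_X$ by pushing forward the dual Euler sequence and then exhibiting the perfect fiberwise pairing --- whereas you invoke relative Serre duality as a standard theorem.
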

\begin{proof}
First $h^1(\CP^1_x, \ocal_1(-m)|_{\CP^1_x})$ considered as a function on $X$ is constant, where $\CP^1_x$ is the fiber of $\pi_1$ at $x\in X$, since $\ocal_1(-m)|_{\CP^1_x})=\ocal(-m)$. So by Grauert's theorem, $R^1\pi_*(\ocal_1(-m))$ is locally free on $X$.

Next we claim that $R^1\pi_{1*}T_{X_1|X}^*=\ocal_X$

To see this, consider the exact sequence$$0\rightarrow T_{X_1|X}^*\rightarrow \pi_1^*T_X^*\otimes \ocal_1(-1)\rightarrow \ocal_{X_1}\rightarrow 0$$
Pushing forward to $X$ and noticing that $\pi_{1*}(\pi_1^*T_X^*\otimes \ocal_1(-1))=0$ and $R^1\pi_{1*}T_{X_1|X}^*(\pi_1^*T_X^*\otimes \ocal_1(-1))=0$, and since $\pi_{1*}\ocal_{X_1}=\ocal_X$, we have exact sequence $$0\rightarrow \ocal_X\rightarrow R^1\pi_{1*}T_{X_1|X}^*\rightarrow 0$$
Thus the claim is proved.

By applying the Serre duality theorem on the fibers of the projection $\pi_1:X_1\rightarrow X$, we see that the natural pairing $$\pi_*(T_{X_1|X}^*-\ocal_1(-m))\times R^1\pi_*(\ocal_1(-m))\rightarrow R^1\pi_{1*}T_{X_1|X}^*=\ocal_X$$ is a perfect pairing, therefore we have
\begin{eqnarray}
R^1\pi_*(\ocal_1(-m))&=&(\pi_*(T_{X_1|X}^*-\ocal_1(-m)))^*\\&=&(\det T_X^*\otimes S^{m-2}T_X^*)^*\\&=&(-K_X)\otimes S^{m-2}T_X
\end{eqnarray}
\end{proof}

We thus get that when $3n-4k-3l\leq -2$,
\begin{equation}
R^1\pi_{1*}(\ocal_1(3n-4k-3l)\otimes \pi_1^*(K_X)^{\otimes (k+l)})=S^{4k+3l-3n-2}T_X\otimes ((k+l-1)K_X)
\end{equation}

To calculate $h^1(X, S^{4k+3l-3n-2}T_X\otimes ((k+l-1)K_X))$, our strategy is: first we calculate the Euler characteristic, then estimate $h^0$ and $h^2$, after that we will be able to get a good estimation of $h^1$.

Now we use the Hirzebruch-Riemann-Roch formula to calculate the Euler characteristic of $S^{4k+3l-3n-2}T_X\otimes ((k+l-1)K_X)$

By the identity $T_X^*=T_X\otimes K_X$, we have $$S^{4k+3l-3n-2}T_X\otimes ((k+l-1)K_X)=S^{4k+3l-3n-2}T_X^*\otimes ((3n-3k-2l+1)K_X)$$

Observe that $S^{4k+3l-3n-2}T_X^*\otimes ((3n-3k-2l+1)K_X)$ can be considered as direct image of $\ocal_1(4k+3l-3n-2)\otimes \pi_1^*((3n-3k-2l+1)K_X)$ under the projection $\pi_1: X_1\rightarrow X$, therefore by the Hirzebruch-Riemann-Roch formula on $X_1$, we get
\begin{eqnarray}
\chi(S^{4k+3l-3n-2}T_X^*\otimes ((3n-3k-2l+1)K_X))=
\\\frac{1}{6}[(4k+3l-3n-2)^3(c_1^2-c_2)
+3(4k+3l-3n-2)^2(3n-3k-2l+1)c_1^2\\+3(4k+3l-3n-2)(3n-3k-2l+1)^2c_1^2]
+O(n^2)
\end{eqnarray}
summing up over all suitable $k$ and $l$, we get
\begin{prop}\label{prop:h2}
\begin{eqnarray}
\sum_{\begin{array}{clcr}0\leq k\leq n,\quad 0\leq l\leq 3n-3k\\3n-4k-3l\leq -2
\end{array}}\chi(S^{4k+3l-3n-2}T_X^*\otimes ((3n-3k-2l+1)K_X))\\
=(\frac{2013}{1536}c_1^2-\frac{2073}{480}c_2)n^5+O(n^4) \qquad \qquad \qquad
\end{eqnarray}
\end{prop}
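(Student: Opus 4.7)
The plan is to sum the Hirzebruch-Riemann-Roch expression for the individual Euler characteristic (already set up on the previous page) over the indexing set
\[
\Lambda=\bigl\{(k,l)\in\Z^2:\ 0\le k\le n,\ 0\le l\le 3n-3k,\ 3n-4k-3l\le -2\bigr\}
\]
and extract the coefficient of $n^5$. Since $|\Lambda|=O(n^2)$ and each summand is a homogeneous cubic in $(n,k,l)$ plus an $O(n^2)$ remainder, the remainder contributes only $O(n^4)$ after summation and may be discarded. Absorbing the additive shifts $-2$ and $+1$ into the $O(n^2)$ error as well, the summand reduces to
\[
\tfrac{1}{6}(4k+3l-3n)^3(c_1^2-c_2)+\tfrac{1}{2}(4k+3l-3n)^2(3n-3k-2l)\,c_1^2+\tfrac{1}{2}(4k+3l-3n)(3n-3k-2l)^2\,c_1^2.
\]

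Next I would rescale by $s=k/n$, $t=l/n$. After this substitution the lattice $\Lambda$ converges to the planar region
\[
D=\bigl\{(s,t):\ 0\le s\le 1,\ 0\le t\le 3-3s,\ 4s+3t\ge 3\bigr\},
\]
a quadrilateral with vertices $(0,1),(0,3),(1,0),(3/4,0)$, which I prefer to write as $T\setminus T'$ where $T$ is the triangle with vertices $(0,0),(1,0),(0,3)$ and $T'$ is the triangle with vertices $(0,0),(3/4,0),(0,1)$. The Riemann-sum interpretation then turns the lattice sum into
\[
\sum_{(k,l)\in\Lambda}\chi_{k,l}\ =\ n^5\!\left[c_1^2\iint_D A(s,t)\,ds\,dt+c_2\iint_D B(s,t)\,ds\,dt\right]+O(n^4),
\]
with
\[
A(s,t)=\tfrac{1}{6}(4s+3t-3)^3+\tfrac{1}{2}(4s+3t-3)^2(3-3s-2t)+\tfrac{1}{2}(4s+3t-3)(3-3s-2t)^2
\]
and $B(s,t)=-\tfrac{1}{6}(4s+3t-3)^3$, both cubic polynomials read off directly from the preceding display.

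The remaining obstacle is purely computational: evaluating the two polynomial integrals over $D$. Via the decomposition $D=T\setminus T'$, each integral splits into a pair of iterated integrals of a degree-three polynomial over a triangle with axis-aligned legs, each of which I can evaluate by routine substitution. The resulting rational numbers should assemble into the claimed coefficients $\tfrac{2013}{1536}c_1^2-\tfrac{2073}{480}c_2$, at which point the proposition is immediate. The main risk of arithmetical slip lies in the many cross-terms produced by fully expanding $A(s,t)$ (and in tracking the denominators $1536=2^9\cdot 3$ and $480=2^5\cdot 3\cdot 5$ correctly), so in practice I would double-check this step with computer algebra.
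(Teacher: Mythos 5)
Your proposal is correct and follows essentially the same route as the paper, which likewise just sums the leading cubic term of the Hirzebruch--Riemann--Roch expression for $\chi(\ocal_1(4k+3l-3n-2)\otimes \pi_1^*((3n-3k-2l+1)K_X))$ over the index set (the paper simply says ``summing up over all suitable $k$ and $l$''); your Riemann-sum formulation with $s=k/n$, $t=l/n$ over the quadrilateral $D=T\setminus T'$ is the natural way to carry out that summation at order $n^5$, and discarding the shifts $-2$, $+1$ is justified since they perturb each of the $O(n^2)$ summands by $O(n^2)$. I checked the two integrals: using $(P+Q)^3-Q^3$ with $P=4s+3t-3$, $Q=3-3s-2t$ one gets $\iint_D A = \tfrac16\bigl(\tfrac{6039}{1024}+\tfrac{2013}{1024}\bigr)=\tfrac{2013}{1536}$ and $\iint_D B = -\tfrac16\cdot\tfrac{2073}{80}=-\tfrac{2073}{480}$, exactly the stated coefficients.
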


To calculate $h^0(X, S^{4k+3l-3n-2}T_X\otimes ((k+l-1)K_X))$ and $h^2(X, S^{4k+3l-3n-2}T_X\otimes ((k+l-1)K_X))$, we need the following theorem
\begin{theo}[\cite{bb}]\label{theo:bb}
Let $X$ be a smooth projective surface in $\PP^N$. Then
\begin{equation}
H^0(X, S^m[\Omega_X^1(1)])=0
\end{equation}
if and only if $X$ is not a quadric.
\end{theo}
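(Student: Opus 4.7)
The plan is to split the theorem into its two directions and, for the hard one, reduce the global sections of $S^m\Omega_X^1(1)$ to a tractable linear-algebra question on $\CP^N$ by means of the Euler sequence, then control the remaining obstruction using the conormal filtration.

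For the ``only if'' direction (a quadric produces nonzero sections), when $X\subset\CP^3$ is smooth of degree two, the isomorphism $X\cong \CP^1\times\CP^1$ gives $\Omega_X^1=\ocal(-2,0)\oplus\ocal(0,-2)$ and $\ocal_X(1)=\ocal(1,1)$, so that
$$\Omega_X^1(1)=\ocal(-1,1)\oplus\ocal(1,-1),\qquad S^m\Omega_X^1(1)=\bigoplus_{a+b=m}\ocal(b-a,a-b).$$
Whenever $m$ is even, the summand with $a=b=m/2$ is the trivial line bundle $\ocal(0,0)=\ocal_X$, which supplies a nonzero section. So the content of the theorem lies in the vanishing for non-quadric $X$.

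For the ``if'' direction, I would restrict the Euler sequence $0\to \Omega_{\CP^N}^1(1)\to \ocal_{\CP^N}^{\oplus(N+1)}\to \ocal(1)\to 0$ to $X$, so that $\Omega_{\CP^N}^1(1)|_X$ is a subbundle of a trivial bundle and hence $S^m\Omega_{\CP^N}^1(1)|_X\hookrightarrow \ocal_X^{\oplus\binom{N+m}{m}}$. A global section then corresponds to a tensor $T\in S^m\C^{N+1}$ lying at every point in $S^mH_x$, where $H_x=\{y:\sum x_iy_i=0\}$. Since $X$ is nondegenerate, the linear forms $\{\ell_x=\sum x_iy_i\}_{x\in X}$ span $(\C^{N+1})^*$, and so their products with elements of $S^{m-1}(\C^{N+1})^*$ span all of $S^m(\C^{N+1})^*$; the pointwise constraints cut out $\{0\}$, giving $H^0(X,S^m\Omega_{\CP^N}^1(1)|_X)=0$. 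Next, from the conormal exact sequence
$$0\to N^*_{X/\CP^N}(1)\to \Omega_{\CP^N}^1(1)|_X\to \Omega_X^1(1)\to 0$$
and its induced symmetric-power filtration, we obtain the short exact sequence
$$0\to F^1\to S^m\Omega_{\CP^N}^1(1)|_X\to S^m\Omega_X^1(1)\to 0$$
with $F^1$ filtered by $S^iN^*_{X/\CP^N}(1)\otimes S^{m-i}\Omega_X^1(1)$ for $i=1,\dots,m$. Combined with the vanishing just proved, the long exact sequence produces the injection $H^0(X,S^m\Omega_X^1(1))\hookrightarrow H^1(X,F^1)$.

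The main obstacle will be the final step: showing that $H^1(F^1)=0$ precisely when $X$ is not a quadric. This requires analyzing $H^1$ of each graded piece $S^iN^*_{X/\CP^N}(1)\otimes S^{m-i}\Omega_X^1(1)$, controlling the boundary maps in the associated spectral sequence (where Serre duality and positivity of $N_{X/\CP^N}(1)=\det T_X\otimes\ocal_X(1)$ will enter), and invoking the classification of smooth projective surfaces that carry nontrivial symmetric differentials twisted by a small positive line bundle. The quadric emerges as the unique exceptional surface exactly because of the split structure of $\Omega_X^1(1)$ exploited in the first paragraph; any other nondegenerate smooth surface cannot compensate for the negativity forced on $F^1$ by the conormal twist.
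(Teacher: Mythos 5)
You should know at the outset that the paper contains no proof of this statement to compare against: Theorem \ref{theo:bb} is imported verbatim from Bogomolov--De Oliveira \cite{bb}, so your proposal has to stand on its own. Its opening moves are correct, and they are in fact the standard ones. The quadric computation is right (note it produces sections only for even $m$, so the ``if and only if'' has to be read with the appropriate quantifier over $m$). The Euler-sequence reduction is right, though your justification by ``products spanning'' is loose; the clean mechanism is: a section of $S^m\Omega^1_{\PP^N}(1)|_X$ is a constant tensor $T$ with $\iota_xT=0$ for every $x$ in the affine cone over $X$ (the fiber of $\Omega^1_{\PP^N}(1)$ at $[x]$ is $x^\perp$, and $S^m(x^\perp)=\ker\iota_x$), and since $x\mapsto\iota_xT$ is linear and the cone spans $\C^{N+1}$ once one replaces $\PP^N$ by the span of $X$ (harmless, as $\Omega^1_X(1)$ depends only on $(X,\ocal_X(1))$), $T=0$. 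The conormal filtration then does give the injection $H^0(X,S^m[\Omega^1_X(1)])\hookrightarrow H^1(X,F^1)$.

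The genuine gap is that the entire content of the theorem has been deferred to the claim $H^1(X,F^1)=0$ for non-quadrics, and that claim is not only unproven but false in general, so the approach collapses exactly where the theorem lives. For a smooth hypersurface $X\subset\PP^3$ of degree $d$, the graded pieces of $F^1$ are $S^{m-i}\Omega^1_X\otimes\ocal_X(m-id)$ for $1\le i\le m$; when $d\mid m$, the piece with $i=m/d$ is the untwisted $S^j\Omega^1_X$ with $j=m-m/d$, and Riemann--Roch gives $\chi(S^j\Omega^1_X)=\frac{j^3}{6}(c_1^2-c_2)+O(j^2)$ with $c_1^2-c_2=-d(4d-10)<0$, so $h^1(S^j\Omega^1_X)\ge -\chi(S^j\Omega^1_X)$ grows like $j^3$. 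No vanishing theorem covers these groups (Nakano-type vanishing misses the range $p+q=\dim X$, and already $H^1(X,\Omega^1_X)\neq 0$ for every projective surface); indeed the surrounding paper's whole strategy is premised on such $h^1$'s being large correction terms rather than zero (cf.\ proposition \ref{prop:h2} and the remark at the end of section \ref{subsec:on curve}). Your proposed rescue --- ``invoking the classification of smooth projective surfaces that carry nontrivial symmetric differentials twisted by a small positive line bundle'' --- is circular, since that classification \emph{is} the Bogomolov--De Oliveira theorem being proved. A smaller but real error: $N_{X/\PP^N}(1)=\det T_X\otimes\ocal_X(1)$ is wrong (the left side has rank $N-2$; even for a hypersurface, $N_{X/\PP^3}(1)=\ocal_X(d+1)$ while $\det T_X\otimes\ocal_X(1)=\ocal_X(5-d)$). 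For the record, the published proof in \cite{bb} avoids any such cohomological vanishing: it argues geometrically, extending twisted symmetric differentials to the cone over $X$ and analyzing them along tangent lines, with the quadric singled out by that geometry rather than by an $H^1$ computation.
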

Now it is easy to see the following
\begin{lem}\label{lem:1}
$H^2(X, S^{4k+3l-3n-2}T_X\otimes ((k+l-1)K_X))=0$
\end{lem}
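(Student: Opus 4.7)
The plan is to reduce the vanishing to Bogomolov's Theorem \ref{theo:bb} via Serre duality. First, by Serre duality on the smooth projective surface $X$,
$$H^2(X, S^aT_X \otimes bK_X) \cong H^0(X, S^a\Omega_X^1 \otimes (1-b)K_X)^*,$$
where I write $a = 4k+3l-3n-2$ and $b = k+l-1$, so $1-b = 2-k-l$. It therefore suffices to show that $H^0(X, S^a\Omega_X^1 \otimes (2-k-l)K_X) = 0$.

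Next, using $K_X = \ocal_X(d-4)$, the twist becomes $\ocal_X((2-k-l)(d-4))$. For the triples $(n,k,l)$ relevant to Case 2 with $n$ in the asymptotic range, the filtration constraints $0\leq k\leq n$, $0\leq l\leq 3n-3k$, together with $4k+3l-3n\geq 2$, force $k+l\geq 2$, whence $(2-k-l)(d-4)\leq 0\leq a$ and the integer $a+(k+l-2)(d-4)$ is nonnegative. Multiplication by a nonzero section of $\ocal_X(a+(k+l-2)(d-4))$ pulled back from a homogeneous polynomial on $\CP^3$ then defines an injective sheaf map, hence an injection on global sections,
$$H^0(X, S^a\Omega_X^1\otimes(2-k-l)K_X)\hookrightarrow H^0(X, S^a\Omega_X^1\otimes\ocal_X(a))=H^0(X, S^a[\Omega_X^1(1)]).$$
I now invoke Theorem \ref{theo:bb}: since $d\geq 5$ the hypersurface $X$ is not a quadric, so the right-hand side vanishes whenever $a\geq 1$, and the vanishing of the left-hand side follows.

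The subtle point will be bookkeeping the boundary triples where $a=0$ and $k+l=2$; here the injection only lands in $H^0(\ocal_X)=\C$ and Bogomolov does not directly yield the vanishing. However, these exceptional triples form a finite set (in fact a single triple $(n,k,l)=(2,2,0)$ satisfies all the constraints simultaneously), so their total contribution to $h^2$ is $O(1)$ and is harmlessly absorbed into the $O(n^4)$ error term in the subsequent Hirzebruch--Riemann--Roch estimate of $h^1$. Apart from this edge-case bookkeeping, the argument reduces entirely to Serre duality plus a single application of Theorem \ref{theo:bb}.
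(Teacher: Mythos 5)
Your proof is essentially the paper's own argument: Serre duality to reduce to $H^0(X, S^a\Omega_X^1\otimes(2-k-l)K_X)$, then the multiplication-by-a-section embedding into $S^a[\Omega_X^1(1)]$ and Theorem \ref{theo:bb}; you are in fact more careful than the paper, which dismisses the boundary by simply noting that for $n$ big the constraint $4k+3l\geq 3n+2$ forces $2-k-l<0$. One small point neither write-up states (but which is trivial to fix): when $a=0$ and $k+l\geq 3$, Theorem \ref{theo:bb} does not apply (it requires $m\geq 1$, since $H^0(\ocal_X)=\C$), yet the bundle is then just the negative line bundle $(2-k-l)K_X$, whose $H^0$ vanishes because $K_X$ is ample for $d\geq 5$, so the lemma still holds there.
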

\begin{proof}
By Serre duality theorem, we have $$H^2(X, S^{4k+3l-3n-2}T_X\otimes ((k+l-1)K_X))=H^0(X, S^{4k+3l-3n-2}T_X^*\otimes ((2-k-l)K_X))$$
Since $4k+3l\geq 3n+2$, for $n$ big, $2-k-l<0$, so we have proper embedding $$S^{4k+3l-3n-2}T_X\otimes ((k+l-1)K_X))\rightarrow S^{4k+3l-3n-2}[\Omega_X^1(1)]$$. Therefore by theorem \ref{theo:bb}, we have $H^0(X, S^{4k+3l-3n-2}T_X^*\otimes ((2-k-l)K_X))=0$
\end{proof}
\smallskip
About $h^0(X, S^{4k+3l-3n-2}T_X\otimes ((k+l-1)K_X))$, we need to estimate it by restricting it to a selected curve. This will be done in the next section.

\subsection{estimations on curves}\label{subsec:on curve}
We continue to use the identification $$S^{4k+3l-3n-2}T_X\otimes ((k+l-1)K_X))=S^{4k+3l-3n-2}T_X^*\otimes ((3n-3k-2l+1)K_X)$$
By theorem \ref{theo:bb}, when $(3n-3k-2l+1)(d-4)\leq 4k+3l-3n-2$, we have
$$H^0(X, S^{4k+3l-3n-2}T_X^*\otimes ((3n-3k-2l+1)K_X))=0$$
To simplify our notation, we will write $p=4k+3l-3n-2$ and $q=3n-3k-2l+1$.

Now when $q(d-4)>p$, since $\ocal_X(1)$ is very ample, by Bertini's theorem, generic divisor in $\ocal_X((d-4)q-p)$ is an irreducible and smooth curve $C_{p,q}$. By the remark on theorem \ref{theo:f}, when $(d-4)q-p>2d$, we can pick $C_{p,q}$ such that the restriction $T_X^*|_{C_{p,q}}$ is semistable. As the case $0<(d-4)q-p\leq 2d$ will not contribute to the coefficient of $n^5$, we will ignore this situation. From now on we will always assume $(d-4)q-p>2d$

Consider the following exact sequence
\begin{equation}
0\rightarrow S^p[T_X^*(1)]\rightarrow S^pT_X^*\otimes (qK_X)\rightarrow S^pT_X^*\otimes (qK_X)\otimes \ocal_{C_{p,q}} \rightarrow 0
\end{equation}

We can read from the long exact cohomology sequence of the above short exact sequence the following inequality
\begin{equation}\label{for:ine restr to C}
H^0(X, S^pT_X^*\otimes (qK_X))\leq H^0(C_{p,q}, S^pT_X^*\otimes (qK_X)|_{C_{p,q}})
\end{equation}

\smallskip
On $C_{p,q}$, by adjunction formula, we have $K_{C_{p,q}}=(K_X+C_{p,q})|_{C_{p,q}}$
Similar as lemma \ref{lem:1}, we can prove the following lemma
\begin{lem}\label{lem:c}
For $p>0$, $$H^1(C_{p,q}, S^pT_X^*\otimes (qK_X)|_{C_{p,q}})=0$$
\end{lem}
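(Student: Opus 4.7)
The proof should parallel the proof of Lemma \ref{lem:1}, but carried out on the curve $C_{p,q}$ rather than on the surface $X$. The three ingredients are: Serre duality on $C_{p,q}$, the semistability of $T_X^*|_{C_{p,q}}$ (which holds by the restriction theorem \ref{theo:f} and the remark following it, precisely because we took $C_{p,q}\in|\ocal_X((d-4)q-p)|$ generic with $(d-4)q-p>2d$), and Kobayashi's vanishing \ref{theo:ko}. The adjunction formula $K_{C_{p,q}}=(K_X+\ocal_X((d-4)q-p))|_{C_{p,q}}$ was recorded right before the statement precisely so that we can apply Serre duality explicitly.

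Concretely, I would first apply Serre duality on the curve to obtain
\[
H^1\bigl(C_{p,q},\,S^pT_X^*\otimes(qK_X)|_{C_{p,q}}\bigr)\;\cong\;H^0\bigl(C_{p,q},\,(S^pT_X^*)^*\otimes(-qK_X)|_{C_{p,q}}\otimes K_{C_{p,q}}\bigr).
\]
Using the identity $T_X\cong T_X^*\otimes K_X^{-1}$ (so that $(S^pT_X^*)^*\cong S^pT_X\cong S^pT_X^*\otimes K_X^{-p}$) together with adjunction, the bundle on the right hand side can be rewritten as
\[
S^pT_X^*\otimes\bigl((1-p-q)K_X+\ocal_X((d-4)q-p)\bigr)\Big|_{C_{p,q}}.
\]
Since $T_X^*|_{C_{p,q}}$ is semistable by the remark after Theorem \ref{theo:f}, its symmetric power $S^pT_X^*|_{C_{p,q}}$ is semistable (in characteristic zero symmetric powers of semistable vector bundles on curves remain semistable), and by Proposition \ref{theo:ko2} twisting by any line bundle preserves semistability. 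Hence the bundle displayed above is semistable on $C_{p,q}$.

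Finally, using $K_X=\ocal_X(d-4)|_X$, the intersection number $\deg\ocal_X(1)|_{C_{p,q}}=d((d-4)q-p)$, and $\mu(S^pT_X^*|_{C_{p,q}})=p\,\mu(T_X^*|_{C_{p,q}})=\tfrac{p}{2}\,d(d-4)((d-4)q-p)$, a direct slope calculation reduces the negativity of the slope of the dual bundle to an inequality of the form $(d-4)(p-2)>-2p$, which is automatic as soon as $p\ge 2$; Proposition \ref{theo:ko} then forces the $H^0$ to vanish. The step I expect to be the main obstacle is the boundary case $p=1$, where this crude slope bound is not enough: there one may need to imitate Lemma \ref{lem:1} more literally, embedding the bundle (after a harmless twist by $\ocal_X(1)$) into $S^p[\Omega_X^1(1)]|_{C_{p,q}}$ and invoking Theorem \ref{theo:bb} on a surface containing $C_{p,q}$, or alternatively noting that these few exceptional $(k,l)$ contribute only $O(n^4)$ and can simply be absorbed into the error terms when we later sum.
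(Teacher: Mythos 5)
Your proposal is essentially the paper's own proof: Serre duality on $C_{p,q}$ via the adjunction formula recorded before the lemma, semistability of $S^pT_X^*|_{C_{p,q}}$ (the paper, like you, passes from Theorem \ref{theo:f} applied to $T_X^*|_{C_{p,q}}$ to its symmetric powers via the standard characteristic-zero fact), a slope computation, and the vanishing given by Propositions \ref{theo:ko} and \ref{theo:ko2}. The one point where you diverge is the boundary case $p=1$, and there your caution is better founded than the paper's confidence: the paper simply asserts that ``the assumptions on $p$ and $q$'' force $\deg\bigl(S^pT_X^*\otimes((q-1)K_X-C_{p,q})\bigr)^*\big|_{C_{p,q}}<0$, but writing $e=(d-4)q-p$ the slope of that dual bundle is $-ed\bigl((d-4)(\tfrac{p}{2}-1)+p\bigr)$, which for $p=1$ equals $ed(d-6)/2>0$ as soon as $d\geq 7$. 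So the paper's proof is genuinely incomplete at $p=1$ --- exactly the failure of your inequality $(d-4)(p-2)>-2p$ --- and the lemma as stated is not actually established there. Of your two proposed repairs, the second is the correct one: for $p=1$ (or any bounded $p$) there are only $O(n)$ admissible pairs $(k,l)$, and the trivial bound $h^0\leq \deg+\rank$ for a semistable bundle of nonnegative slope on a curve gives $h^0(C_{p,q},\,T_X^*\otimes(qK_X)|_{C_{p,q}})=O(n^2)$, so these terms contribute $O(n^3)$ in total and are absorbed into the $O(n^4)$ error of Theorem \ref{theo:mainestimation} (the same remark applies to Lemma \ref{lem:4 c} at $\alpha=1$). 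Your first suggested repair, by contrast, does not work as sketched: Theorem \ref{theo:bb} kills global sections on the surface $X$, but the restriction map $H^0(X,\cdot)\to H^0(C_{p,q},\cdot)$ is far from surjective, so vanishing on $X$ gives no control of $H^0$ on the curve.
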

\begin{proof}
Since $H^1(C_{p,q}, S^pT_X^*\otimes (qK_X)|_{C_{p,q}})=H^0(C_{p,q}, (S^pT_X^*\otimes ((q-1)K_X-C_{p,q})^*|_{C_{p,q}})$, where $S^pT_X^*\otimes ((q-1)K_X-C_{p,q}=$
By our choice of $C_{p,q}$, the restriction $S^pT_X^*|_{C_{p,q}}$ is semistable. And the assumptions on $p$ and $q$ implies $\deg (S^pT_X^*\otimes ((q-1)K_X-C_{p,q}))^*<0$, therefore by proposition \ref{theo:ko} and \ref{theo:ko2} again, $$H^0(C_{p,q}, (S^pT_X^*\otimes ((q-1)K_X))^*|_{C_{p,q}})=0$$
\end{proof}

Since again the case $p=0$ will not contribute to the coefficient of $n^5$, we also ignore this situation.

Now by lemma \ref{lem:c}, $$H^0(C_{p,q}, S^pT_X^*\otimes (qK_X)|_{C_{p,q}})=\chi(S^pT_X^*\otimes (qK_X)|_{C_{p,q}})$$

And since $\rank S^pT_X^*=p+1 $ and $c_1(S^pT_X^*\otimes (qK_X))=\frac{p(p+1)}{2}c_1(T_X^*)+(p+1)qc_1(K_X)$ by Hirzebruch-Riemann-Roch formula
\begin{eqnarray}
\chi(S^pT_X^*\otimes (qK_X)|_{C_{p,q}})=\deg_{C_{p,q}}(\frac{p(p+1)}{2}c_1(T_X^*)+(p+1)qc_1(K_X)-\frac{p+1}{2}c_1(K_{C_{p,q}}))\\
=(p+1)[(\frac{p}{2}+q)d(d-4)-\frac{1}{2}((q+1)(d-4)-p)d](q(d-4)-p)
\end{eqnarray}

Again, we only care about terms that will contribute to the coefficient of $n^5$, the equation above can be simplified to
\begin{equation}\label{for:on C}\chi(S^pT_X^*\otimes (qK_X)|_{C_{p,q}})\sim \frac{pq(p+q)}{2}d(d-4)^2-\frac{p^3}{2}d(d-3)\end{equation}

Now combining theorem \ref{theo:chi}, proposition \ref{prop:h2} and formula \ref{for:on C}, we get the following estimation
\begin{theo}
\label{theo:mainestimation}
$$h^0(X_3, \ocal_3(2n,n))\geq (\frac{159}{512}c_1^2-\frac{27}{160}c_2)n^5-\sum_{(k,l)\in S}[\frac{pq(p+q)}{2}d(d-4)^2-\frac{p^3}{2}d(d-3)]+O(n^4)
$$
where $p=4k+3l-3n-2$ and $q=3n-3k-2l+1$ and the set $$S=\{0\leq k\leq n, 0\leq l\leq 3n-3k, 4k+3l-3n-2\geq 0,(d-4)q-p>2d\}$$
\end{theo}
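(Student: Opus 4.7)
The plan is to apply the elementary inequality
\[
h^0(X_3,\ocal_3(2n,n))\;\geq\;\chi(\ocal_3(2n,n))-h^2(X_3,\ocal_3(2n,n))-h^4(X_3,\ocal_3(2n,n))
\]
coming from $\chi=h^0-h^1+h^2-h^3+h^4-h^5$, and to assemble the three pieces developed in this section. Theorem \ref{theo:chi} supplies the Euler characteristic as $n^5(\tfrac{249}{60}c_2-c_1^2)+O(n^4)$, and $h^4(X_3,\ocal_3(2n,n))=0$ follows because the iterated pushforward by $\pi_3$ and $\pi_2$ lands on $X_1$, whose dimension is $3$. The whole estimate therefore reduces to an upper bound on $h^2(X_3,\ocal_3(2n,n))$.

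For this I would chain the filtration of $\pi_{3*}\ocal_3(2n,n)=S^nV_2^*\otimes\ocal_2(2n)$ with the filtration of $S^{3n-3k}V_1^*\otimes(\det V_1^*)^{\otimes k}$ on $X_1$ to obtain
\[
h^2(X_3,\ocal_3(2n,n))\;\leq\;\sum_{k,l}h^2\bigl(X_1,\ocal_1(3n-4k-3l)\otimes\pi_1^*K_X^{\otimes(k+l)}\bigr),
\]
and then dichotomize on the sign of $3n-4k-3l$. When $3n-4k-3l\geq 0$, pushforward to $X$ identifies the summand with $h^2(X,S^{3n-4k-3l}T_X^*\otimes(k+l)K_X)$, which vanishes throughout the bulk of the index range by the semistability theorem proved at the start of subsection 3.2 (the residual terms with $k+l\leq 1$ form an $O(n^4)$ slab). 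When $3n-4k-3l\leq -2$, the Leray spectral sequence together with Proposition \ref{for:R1} identifies the summand with $h^1(X,S^pT_X^*\otimes qK_X)$ for $p=4k+3l-3n-2$ and $q=3n-3k-2l+1$. Rewriting $h^1=h^0+h^2-\chi$ and killing the middle term by Lemma \ref{lem:1} gives
\[
h^2(X_3,\ocal_3(2n,n))\;\leq\;\sum_{\text{Case 2}}h^0(X,S^pT_X^*\otimes qK_X)-\sum_{\text{Case 2}}\chi(S^pT_X^*\otimes qK_X)+O(n^4).
\]

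The Euler-characteristic sum is exactly Proposition \ref{prop:h2}, evaluating to $(\tfrac{2013}{1536}c_1^2-\tfrac{2073}{480}c_2)n^5+O(n^4)$. For the $h^0$ sum I would first eliminate indices with $q(d-4)\leq p$ via Theorem \ref{theo:bb}, and then handle indices with $q(d-4)>p$ by restricting to a smooth curve $C_{p,q}\in|\ocal_X((d-4)q-p)|$ on which $T_X^*|_{C_{p,q}}$ remains semistable---available by Theorem \ref{theo:f} as soon as $(d-4)q-p>2d$, the remaining range being yet another $O(n^4)$ slab. The restriction inequality (\ref{for:ine restr to C}), the vanishing in Lemma \ref{lem:c}, and Riemann--Roch on $C_{p,q}$ then supply the per-summand estimate (\ref{for:on C}). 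Combining the three contributions and simplifying the coefficients ($-1+\tfrac{2013}{1536}=\tfrac{159}{512}$ and $\tfrac{249}{60}-\tfrac{2073}{480}=-\tfrac{27}{160}$) reproduces the inequality in the statement. The main obstacle is the bookkeeping: one must verify that each excluded family of indices---Case 1 terms with $k+l\leq 1$ and Case 2 terms with $0<(d-4)q-p\leq 2d$---really contributes only $O(n^4)$ in the five-parameter lattice sum, since it is precisely the $n^5$ coefficient that ultimately determines the numerical threshold $d\geq 11$.
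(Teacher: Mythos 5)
Your proposal is correct and takes essentially the same route as the paper, whose proof of Theorem \ref{theo:mainestimation} is exactly the combination you assemble: the filtration chain down to $X_1$, the dichotomy on the sign of $3n-4k-3l$ with Case 1 killed by semistability, the Leray identification $h^2(X_1,\cdot)=h^1(X,R^1\pi_{1*}(\cdot))$ via Proposition \ref{for:R1}, the rewriting $h^1=h^0+h^2-\chi$ with $h^2=0$ from Lemma \ref{lem:1}, the curve restriction governed by Theorem \ref{theo:f}, formula \ref{for:ine restr to C}, Lemma \ref{lem:c} and formula \ref{for:on C}, and the coefficient arithmetic $-1+\frac{2013}{1536}=\frac{159}{512}$, $\frac{249}{60}-\frac{2073}{480}=-\frac{27}{160}$. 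The only stratum you leave implicit is $3n-4k-3l=-1$, where both $\pi_{1*}$ and $R^1\pi_{1*}$ of $\ocal_1(-1)$ vanish so those terms are identically zero (and would in any case be absorbed into $O(n^4)$), so your bookkeeping is sound.
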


We can make the set $S$ bigger for the summation of $\frac{pq(p+q)}{2}d(d-4)^2$ by just requiring $q>0$ and the set $S$ smaller for the summation of $\frac{p^3}{2}d(d-3)$ by requiring $q\geq p$. Then we get the following estimation
\begin{cor}When $d\geq 5$,
\begin{equation}h^0(X_3, \ocal_3(2n,n))\geq d(\frac{33}{320}d^2-\frac{359523951}{240100000}d+\frac{799455603}{240100000})n^5+O(n^4)
\end{equation}
\end{cor}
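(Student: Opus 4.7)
The plan is to simplify the right-hand side of Theorem \ref{theo:mainestimation} by passing from the two awkward lattice sums over $S$ to planar integrals, and then specialize the Chern numbers to $X\subset\CP^3$.

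First I would record $c_1^2 = d(d-4)^2$ and $c_2 = d(d^2-4d+6)$, which come from the adjunction formula by expanding $(1+H)^4/(1+dH)$ modulo $H^3$ and using $H^2\cdot X = d$. This turns $\bigl(\tfrac{159}{512}c_1^2 - \tfrac{27}{160}c_2\bigr)n^5$ into an explicit cubic in $d$. Next I would introduce rescaled variables $u = k/n$, $v = l/n$, in which $p = (4u+3v-3)n + O(1)$, $q = (3-3u-2v)n + O(1)$, and $p+q = (u+v)n + O(1)$. The summands $\tfrac{pq(p+q)}{2}$ and $\tfrac{p^3}{2}$ are cubic in $n$, so the Riemann sum over lattice points $(k,l)$ equals $n^5$ times an integral over the limit region, up to an $O(n^4)$ error.

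Following the hint preceding the corollary, I would upper-bound the first sum by enlarging $S$ to the region $A_1$ defined by $0\leq u\leq 1$, $0\leq v\leq 3(1-u)$, $p\geq 0$, $q > 0$, thus discarding the condition $(d-4)q - p > 2d$; on $A_1$ the integrand is non-negative, so this genuinely upper-bounds $\Sigma_1$, and because it enters with a minus sign the resulting bound on $h^0$ remains valid (albeit weaker). Dually, I would lower-bound the second sum by shrinking $S$ to $A_2 = A_1\cap\{q\geq p\}$. The substantive step is then evaluating the two rational integrals
\begin{equation*}
I_1 = \int_{A_1}\tfrac{1}{2}(4u+3v-3)(3-3u-2v)(u+v)\,du\,dv,
\end{equation*}
\begin{equation*}
I_2 = \int_{A_2}\tfrac{1}{2}(4u+3v-3)^3\,du\,dv,
\end{equation*}
done by splitting the $(u,v)$-domain at $u = 3/4$ (where the lower bound $v\geq (3-4u)/3$ coming from $p\geq 0$ ceases to be binding) and, for $I_2$, also along the line $q = p$, then integrating polynomials in $v$ and in $u$. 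The combined estimate
\begin{equation*}
h^0(X_3,\ocal_3(2n,n)) \geq \bigl(\tfrac{159}{512}c_1^2 - \tfrac{27}{160}c_2 - d(d-4)^2 I_1 + d(d-3) I_2\bigr)n^5 + O(n^4)
\end{equation*}
should, after substituting $c_1^2$ and $c_2$ and collecting like powers of $d$, yield the stated polynomial $d\bigl(\tfrac{33}{320}d^2 - \tfrac{359523951}{240100000}d + \tfrac{799455603}{240100000}\bigr)n^5$.

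The main obstacle is purely arithmetic: the $d^3$-contributions of $\tfrac{159}{512}c_1^2$ and of $d(d-4)^2 I_1$ are of comparable size, and their partial cancellation against $d(d-3) I_2$ determines the leading coefficient $\tfrac{33}{320}$, so the integrals $I_1$ and $I_2$ must be computed exactly — any small slip propagates to the final answer. Beyond this bookkeeping and the standard Riemann-sum approximation, the proof is mechanical.
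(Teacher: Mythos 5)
Your proposal is correct and is essentially the paper's own argument: the corollary is obtained from Theorem \ref{theo:mainestimation} by exactly the enlargement of $S$ to $\{p\geq 0,\ q>0\}$ for the $\frac{pq(p+q)}{2}d(d-4)^2$-sum and the shrinkage to $\{q\geq p\}$ for the $\frac{p^3}{2}d(d-3)$-sum, followed by the Riemann-sum evaluation of the lattice sums at order $n^5$ and the substitution $c_1^2=d(d-4)^2$, $c_2=d(d^2-4d+6)$. Your two integrals also check out exactly: $I_1=\frac{99}{2560}$ and $I_2=\frac{626049}{240100000}$, which reproduce the coefficients $\frac{33}{320}$, $-\frac{359523951}{240100000}$ and $\frac{799455603}{240100000}$ precisely.
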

When $d\geq 12$, the coefficient of $n^5$ is positive,
therefore we get the following
\begin{cor}
When $d\geq 12$, $\ocal_3(2n,n)$ is big.
\end{cor}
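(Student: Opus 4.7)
The plan is short, because the preceding corollary has already reduced the theorem to an arithmetic check. That corollary gives, for any $d \geq 5$,
\[
h^0(X_3,\ocal_3(2n,n)) \geq d \cdot P(d) \cdot n^5 + O(n^4),
\qquad P(d):=\tfrac{33}{320}d^2 - \tfrac{359523951}{240100000}d + \tfrac{799455603}{240100000}.
\]
So I would split the argument into two steps: verify $P(d) > 0$ for every integer $d \geq 12$, and then translate the $n^5$-growth of $h^0$ into the stated bigness.

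For the positivity step, I would first evaluate $P(12)$ directly: the three summands come out to about $14.85$, $-17.97$, and $3.33$, so $P(12) \approx 0.21 > 0$. To upgrade this single value to all $d \geq 12$, I would compute the derivative $P'(d) = \tfrac{33}{160}d - \tfrac{359523951}{240100000}$ and note that its unique zero sits near $d \approx 7.3$, so $P$ is strictly increasing on $[12,\infty)$. Consequently $P(d) \geq P(12) > 0$ for every integer $d \geq 12$, so the leading $n^5$-coefficient in the lower bound is strictly positive in this range. For the bigness step, I would observe that the Demailly--Semple tower $X = X_0, X_1, X_2, X_3$ is built by three successive $\PP^1$-bundles, so $\dim X_3 = 5$. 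Setting $L := \ocal_3(2,1) = \pi_3^*\ocal_2(2) \otimes \ocal_3(1)$, one has $L^{\otimes n} = \ocal_3(2n,n)$. The lower bound then shows $h^0(X_3, L^{\otimes n})$ grows at the maximal rate $n^{\dim X_3} = n^5$, which is exactly the definition of bigness of $L$.

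The main obstacle in the paper lies entirely upstream of this corollary, not in it. Theorem \ref{theo:mainestimation} already packages the genuinely substantive work: the Hirzebruch--Riemann--Roch computation of Theorem \ref{theo:chi}, the vanishing of $h^2$ of the graded pieces using Tsuji's $K_X$-semistability of $T_X$ (Theorem \ref{theo:ts}) together with Bogomolov's Theorem \ref{theo:bb}, and the Flenner-type restriction to a generic smooth curve $C_{p,q}\subset X$ to bound $h^0$ of the terms coming from $R^1\pi_{1\ast}$ in Case 2. Given that input, the only thing still to do here is the purely numerical verification $P(12) > 0$, and then a one-line invocation of the definition of bigness. Thus I do not expect any technical obstacle at this final stage.
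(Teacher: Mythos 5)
Your proposal is correct and follows essentially the same route as the paper, whose entire proof is the one-line observation that the $n^5$-coefficient in the preceding corollary is positive for $d\geq 12$, combined implicitly with $\dim X_3=5$. Your numerical check ($P(12)\approx 0.21>0$), the monotonicity argument via $P'(d)$ vanishing near $d\approx 7.3$, and the explicit identification $\ocal_3(2n,n)=\ocal_3(2,1)^{\otimes n}$ with growth rate $n^{\dim X_3}$ simply make explicit what the paper leaves to the reader.
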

\begin{rem}
By lemma \ref{lem:c}, when $p>0$ and $q(d-4)>p$, we have the following exact sequence
\begin{eqnarray}
0\rightarrow H^0(X, S^pT_X^*\otimes (qK_X))\rightarrow H^0(C_{p,q}, S^pT_X^*\otimes (qK_X)\otimes \ocal_{C_{p,q}})\\ \rightarrow H^1(X, S^p[T_X^*(1)]) \rightarrow  H^1(X, S^pT_X^*\otimes (qK_X))\rightarrow 0
\end{eqnarray}
So one can also use $h^1(X, S^p[T_X^*(1)])$ to estimate $h^1(X, S^pT_X^*\otimes (qK_X))$ directly instead of using $\chi(S^pT_X^*\otimes (qK_X))$ and $h^0(C_{p,q}, S^pT_X^*\otimes (qK_X)\otimes \ocal_{C_{p,q}})$. But that estimation would be worse than the one we got above.
\end{rem}

\subsection{$\ocal_3(bn,n)$}
The following formulas follow directly from the analysis in the last section

We write $c=b+1$ for $b\geq 2$, $c\geq 3$.

First we have
\begin{equation}
\chi(\ocal_3(bn,n))=\sum_{k=0}^n\sum_{l=0}^{cn-3k}\chi(\ocal_1(cn-4k-3l)\otimes \pi_1^*(K_X)^{\otimes (k+l)})
\end{equation}

Therefore we have
$$\chi(\ocal_3(bn,n))=n^5[(\frac{5}{24}c^4-c^3+\frac{7}{3}c^2-\frac{31}{12}c+\frac{41}{40})c_2-(\frac{1}{24}c^4-\frac{1}{6}c^3+\frac{1}{3}c^2-\frac{1}{3}c+\frac{1}{8})c_1^2]+O(n^4)$$

Next, similar to proposition \ref{prop:h2} we have

\begin{prop}\label{prop:h211}
For $c>4$, we have
\begin{eqnarray}
\sum_{\begin{array}{clcr}0\leq k\leq n,\quad 0\leq l\leq cn-3k\\cn-4k-3l\leq -2
\end{array}}\chi(S^{4k+3l-cn-2}T_X^*\otimes ((cn-3k-2l+1)K_X))\\
=(f_1c_1^2+f_2c_2)n^5+O(n^4) \qquad \qquad \qquad
\end{eqnarray}
where $$f_1=\frac{5}{81}c^4-\frac{47}{162}c^3+\frac{229}{324}c^2-\frac{145}{162}c+\frac{305}{648}$$
$$f_2=-\frac{2}{9}c^4+\frac{10}{9}c^3-\frac{25}{9}c^2+\frac{125}{36}c-\frac{125}{72}$$
\end{prop}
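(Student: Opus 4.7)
\medskip
\noindent\textbf{Proof plan.} The strategy is to mirror the proof of Proposition~\ref{prop:h2}, with $3n$ replaced by $cn$. The hypothesis $c>4$ plays exactly one role: it ensures the inequality $cn-4k-3l\leq -2$ already forces $l>0$ for every $0\leq k\leq n$, so the $l\geq 0$ constraint is redundant and the index set is a single triangular region, rather than the union of two pieces that would appear for $c\leq 4$. This is why the formulas here do not simply specialize to Proposition~\ref{prop:h2} at $c=3$.

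Set $p=4k+3l-cn-2$ and $q=cn-3k-2l+1$. In the index set $p\geq 0$, so by the projective bundle identifications $\pi_{1*}\ocal_1(p)=S^pT_X^*$ and $R^i\pi_{1*}\ocal_1(p)=0$ for $i\geq 1$, we obtain
$$\chi(S^pT_X^*\otimes(qK_X))=\chi(\ocal_1(p)\otimes\pi_1^*(qK_X)).$$
Apply Hirzebruch-Riemann-Roch on $X_1$ to $L=\ocal_1(p)\otimes\pi_1^*(qK_X)$. Only the cubic term $\frac{1}{6}c_1(L)^3$ contributes to the $n^5$ coefficient after summation, since the remaining HRR terms give $O(n^4)$. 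Using the identities $u_1^3=c_1^2-c_2$, $u_1^2c_1=-c_1^2$, $u_1c_1^2=c_1^2$, $c_1^3=0$ on $X_1$,
$$c_1(L)^3=(pu_1-qc_1)^3=p^3(c_1^2-c_2)+3p^2qc_1^2+3pq^2c_1^2.$$

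Convert the double sum to a Riemann integral by setting $x=k/n$, $y=l/n$, $\tilde{p}=4x+3y-c$, $\tilde{q}=c-3x-2y$. The domain becomes $\{(x,y):0\leq x\leq 1,\ (c-4x)/3\leq y\leq c-3x\}$, and the $n^5$ coefficient of the sum equals
$$\frac{1}{6}\int_0^1\int_{(c-4x)/3}^{c-3x}\bigl[(\tilde{p}^3+3\tilde{p}^2\tilde{q}+3\tilde{p}\tilde{q}^2)c_1^2-\tilde{p}^3 c_2\bigr]\,dy\,dx.$$
Since $\tilde{p}$ vanishes at $y=(c-4x)/3$, equals $2c-5x$ at $y=c-3x$, and satisfies $d\tilde{p}/dy=3$, each inner integral is elementary, and the outer integral in $x$ produces a degree-four polynomial in $c$. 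For instance, $\int\tilde{p}^3\,dy=(2c-5x)^4/12$, which leads to
$$f_2=-\frac{(2c)^5-(2c-5)^5}{1800},$$
and expanding by the binomial theorem recovers the stated formula for $f_2$. The computation of $f_1$ is analogous, and is slightly streamlined by the identity $\tilde{p}^3+3\tilde{p}^2\tilde{q}+3\tilde{p}\tilde{q}^2=(\tilde{p}+\tilde{q})^3-\tilde{q}^3=(x+y)^3-(c-3x-2y)^3$. The only obstacle in the argument is this tedious but routine polynomial integration; no conceptual novelty beyond the proof of Proposition~\ref{prop:h2} is needed.
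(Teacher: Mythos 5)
Your proposal is correct and takes essentially the same approach as the paper, whose proof of this proposition is simply to repeat the analysis of Proposition \ref{prop:h2} with $3n$ replaced by $cn$: realize each summand as $\chi(\ocal_1(p)\otimes\pi_1^*(qK_X))$, keep only the cubic Hirzebruch-Riemann-Roch term, reduce via the intersection relations $u_1^3=c_1^2-c_2$, $u_1^2c_1=-c_1^2$, $u_1c_1^2=c_1^2$, $c_1^3=0$, and sum. Your explicit evaluation checks out — the integrals reproduce both stated coefficients (in particular $f_2=-\bigl((2c)^5-(2c-5)^5\bigr)/1800$ expands to the claimed polynomial) — and your observation that $c>4$ makes the constraint $l\geq 0$ redundant, so the region of summation is a single piece and the formula does not specialize to Proposition \ref{prop:h2} at $c=3$, is exactly right.
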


Now similar to theorem \ref{theo:mainestimation}, we have
\begin{theo}
\label{theo:mainestimation11}
For $c>4$,
$$h^0(X_3, \ocal_3(bn,n))\geq (g_1c_1^2+g_2c_2)n^5-\sum_{(k,l)\in S'}[\frac{pq(p+q)}{2}d(d-4)^2-\frac{p^3}{2}d(d-3)]+O(n^4)
$$
where $$g_1=\frac{13}{648}c^4-\frac{10}{81}c^3+\frac{121}{324}c^2-\frac{91}{162}c+\frac{28}{81}$$
$$g_2=-\frac{1}{72}c^4+\frac{1}{9}c^3-\frac{4}{9}c^2+\frac{8}{9}c-\frac{32}{45}$$
and $p=4k+3l-cn-2$ , $q=cn-3k-2l+1$ and the set $$S'=\{0\leq k\leq n, 0\leq l\leq 3n-3k, 4k+3l-cn-2\geq 0,(d-4)q-p>2d\}$$
\end{theo}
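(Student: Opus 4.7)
The plan is to parallel the argument that led to Theorem~\ref{theo:mainestimation}, replacing $\ocal_3(2n,n)$ by $\ocal_3(bn,n)$ throughout and tracking how the parameter $c=b+1$ enters each coefficient. The three main ingredients are the same: push the line bundle down the tower $X_3\to X_2\to X_1\to X$ via the relative Euler sequences at each stage; apply Hirzebruch-Riemann-Roch on $X_1$ using the identities $u_1^3=c_1^2-c_2$, $u_1^2c_1=-c_1^2$, $u_1c_1^2=c_1^2$, $c_1^3=0$; and bound $h^0$ and $h^2$ of the summands via the semistability/restriction tools of Section~\ref{sec:semistable}.

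First I would verify that $\pi_{3*}\ocal_3(bn,n)=S^nV_2^*\otimes\ocal_2(bn)$, use the filtration of $S^nV_2^*$ with graded pieces $\ocal_2(n-k)\otimes(T_{2,1}^*)^{\otimes k}$, and substitute $T_{2,1}^*=\pi_2^*\det V_1^*\otimes\ocal_2(-2)$ to obtain graded bundle $\bigoplus_k\ocal_2(cn-3k)\otimes\pi_2^*(\det V_1^*)^{\otimes k}$. Pushing this further down to $X_1$ and then to $X$ produces, as in Section~3.1, a double sum indexed by $0\leq k\leq n$, $0\leq l\leq cn-3k$. The Case~1 summands ($cn-4k-3l\geq 0$) have vanishing $h^2$ by Tsuji-semistability of $T_X^*$ and Kobayashi's vanishing, exactly as before. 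The Case~2 summands ($cn-4k-3l\leq -2$) are analyzed via Proposition~\ref{for:R1}, producing sheaves $S^{4k+3l-cn-2}T_X\otimes((k+l-1)K_X)$ on $X$; summing their Euler characteristics over the region by HRR yields the advertised $(f_1c_1^2+f_2c_2)n^5+O(n^4)$, where $f_1,f_2$ are the degree-four polynomials in $c$ obtained from the leading-order Riemann sum over the triangle cut out by $\{4k+3l\geq cn+2\}$.

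For the $h^0$ estimate of each Case~2 summand, Lemma~\ref{lem:1} still kills $h^2$ via Theorem~\ref{theo:bb}, and $h^0$ is bounded by restricting to a generic smooth curve $C_{p,q}\in|\ocal_X((d-4)q-p)|$ with $p=4k+3l-cn-2$ and $q=cn-3k-2l+1$, provided $(d-4)q-p>2d$. Flenner's Theorem~\ref{theo:f} then gives semistability of $T_X^*|_{C_{p,q}}$, Lemma~\ref{lem:c} kills $h^1$ on the curve, and HRR on $C_{p,q}$ yields the same asymptotic $\frac{pq(p+q)}{2}d(d-4)^2-\frac{p^3}{2}d(d-3)$. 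Assembling these via $h^0\geq\chi-h^2$ on $X_3$ (noting $h^4=h^5=0$ by dimension after pushforward to $X_1$) and subtracting the curve estimate over the index set $S'$ produces the advertised bound with $g_1,g_2$ polynomials in $c$.

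The main obstacle is purely the coefficient bookkeeping. Each nested sum over $(k,l)$ becomes, to leading order in $n$, a double integral over a triangle whose corner depends on $c$, and the coefficients $f_1,f_2,g_1,g_2$ are the explicit degree-four polynomials in $c$ obtained from those integrals; getting them right requires carefully splitting the Case~1 and Case~2 regions according to the sign of $cn-4k-3l$ and keeping track of which subsum feeds which estimate. The hypothesis $c>4$ enters precisely to ensure that the Case~2 region $\{4k+3l\geq cn+2\}$ and the restriction region $\{(d-4)q-p>2d\}$ both have nonempty interior contributing to the leading $n^5$ term, so that the structure of the $b=2$ argument survives and no case degenerates. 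No new geometric idea is required beyond Section~3; executing the $c$-dependent sums in place of the fixed $c=3$ sums gives the statement.
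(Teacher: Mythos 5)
Your proposal is correct and is essentially identical to the paper's own treatment: the paper proves Theorem~\ref{theo:mainestimation11} simply by rerunning the $\ocal_3(2n,n)$ argument of Section~\ref{sec:3sem} with $c=b+1$ general --- the same pushforwards and filtrations giving the double sum over $\ocal_1(cn-4k-3l)\otimes\pi_1^*(K_X)^{\otimes(k+l)}$, the same Case~1/Case~2 split with the semistability vanishing, Proposition~\ref{for:R1}, Theorem~\ref{theo:bb}, and the restriction to $C_{p,q}$ via Flenner's theorem --- so that $g_1,g_2$ arise exactly as you describe, as the $c$-dependent Riemann sums combining $\chi(\ocal_3(bn,n))$ with the $f_1,f_2$ of Proposition~\ref{prop:h211}. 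Your reading of the hypothesis $c>4$ is also in the right spirit (it fixes the combinatorial chamber of the summation region so that single polynomial formulas in $c$ are valid), and no further comment is needed.
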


Now let $d=11$, we can calculate the second summand on the right hand side of the formula, which is
$$\frac{290521}{795906}(c^4-2c^3+2c^2-c+\frac{1}{5})$$

\begin{theo} For $d=11$, $$h^0(X_3, \ocal_3(bn,n))\geq yn^5+O(n^4)$$
where $$y=\frac{1}{44217}[-\frac{394823}{4}c^4+1575508c^3-\frac{36295897}{4}c^2+22513040c-\frac{40944629}{2}]$$
\end{theo}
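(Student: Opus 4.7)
The plan is to specialize Theorem \ref{theo:mainestimation11} to $d = 11$ and to evaluate each of the two pieces explicitly as a polynomial in $c = b+1$.

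First, I would record the Chern numbers of a smooth hypersurface $X \subset \CP^3$ of degree $d = 11$. From the normal bundle sequence $0 \to T_X \to T_{\CP^3}|_X \to \ocal_X(d) \to 0$ one obtains $c_1^2(X) = d(d-4)^2 = 539$ and $c_2(X) = d(d^2 - 4d + 6) = 913$. Substituting these into the main term $(g_1 c_1^2 + g_2 c_2)n^5$ of Theorem \ref{theo:mainestimation11} and collecting powers of $c$ produces an explicit degree-four polynomial in $c$ multiplied by $n^5$.

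Next I would evaluate the correction sum
$$T(c) \;:=\; \sum_{(k,l)\in S'} \left[\tfrac{pq(p+q)}{2}\cdot d(d-4)^2 \,-\, \tfrac{p^3}{2}\cdot d(d-3)\right]$$
at $d = 11$, so that $d(d-4)^2 = 539$, $d(d-3) = 88$, and the constraint $(d-4)q - p > 2d$ becomes $7q - p > 22$. Since the summand is a polynomial of total degree three in $(k,l,n)$ and $S'$ is a lattice set of linear size $O(n)$ in each direction, $T(c)$ is $O(n^5)$, and its leading coefficient is obtained by passing to a Riemann integral. Putting $s = k/n$, $t = l/n$, $P(s,t) = 4s + 3t - c$, $Q(s,t) = c - 3s - 2t$, the leading coefficient equals
$$\iint_{\Delta(c)} \left[\tfrac{PQ(P+Q)}{2}\cdot 539 \,-\, \tfrac{P^3}{2}\cdot 88\right] ds\, dt,$$
where $\Delta(c) = \{0 \le s \le 1,\ 0 \le t \le c - 3s,\ P \ge 0,\ 7Q \ge P\}$ (the additive constants in $p,q$ and the $+2d$ in the strict inequality defining $S'$ drop out in the scaling limit). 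Splitting $\Delta(c)$ into the appropriate triangles and trapezoids and integrating the cubic polynomial over each piece should recover the claimed value $T(c) = \tfrac{290521}{795906}(c^4 - 2c^3 + 2c^2 - c + \tfrac15)\,n^5 + O(n^4)$.

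Combining the two contributions and reducing over the common denominator $44217$ yields the polynomial $y(c)$ in the statement, giving $h^0(X_3, \ocal_3(bn, n)) \ge y(c)\,n^5 + O(n^4)$.

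The main obstacle is the combinatorial geometry of $\Delta(c)$: one must identify which of the four linear constraints actually bind as $c$ varies in $[3,\infty)$, compute the vertices of $\Delta(c)$ as functions of $c$, and split the polygon into subregions over which the integration of the cubic integrand is elementary. It is this polytope bookkeeping, rather than the integration itself, that is the delicate and error-prone step; once the vertices of $\Delta(c)$ are identified, the rest reduces to a long but routine polynomial calculation.
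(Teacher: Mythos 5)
Your proposal is correct and follows essentially the same route as the paper, whose proof consists precisely of specializing Theorem \ref{theo:mainestimation11} to $d=11$ (where $c_1^2=d(d-4)^2=539$ and $c_2=d(d^2-4d+6)=913$), evaluating the correction sum to leading order in $n$ --- the stated value $\frac{290521}{795906}\left(c^4-2c^3+2c^2-c+\frac{1}{5}\right)$ is exactly what your Riemann-integral computation produces --- and combining the two contributions into $y$. One reassurance on the step you flag as delicate: for $c>4$ the only binding constraints on $\Delta(c)$ are $P\geq 0$, $7Q\geq P$ and $0\leq s\leq 1$ (in particular $t\leq c-3s$ is slack since $7Q>P\geq 0$ forces $t<(c-3s)/2$, and the bound $0\leq l\leq 3n-3k$ in the paper's definition of $S'$ is a typo for $0\leq l\leq cn-3k$, agreeing with your $\Delta(c)$), and the substitution $u=P$ with $a=c-s$ turns the inner integral into a single piece equal to $\frac{4938857}{13530402}(c-s)^4$, after which $\int_0^1(c-s)^4\,ds=c^4-2c^3+2c^2-c+\frac{1}{5}$ gives the claimed polynomial, so the polytope bookkeeping collapses to one routine integration.
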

\begin{figure}\label{fig:y}
     \includegraphics{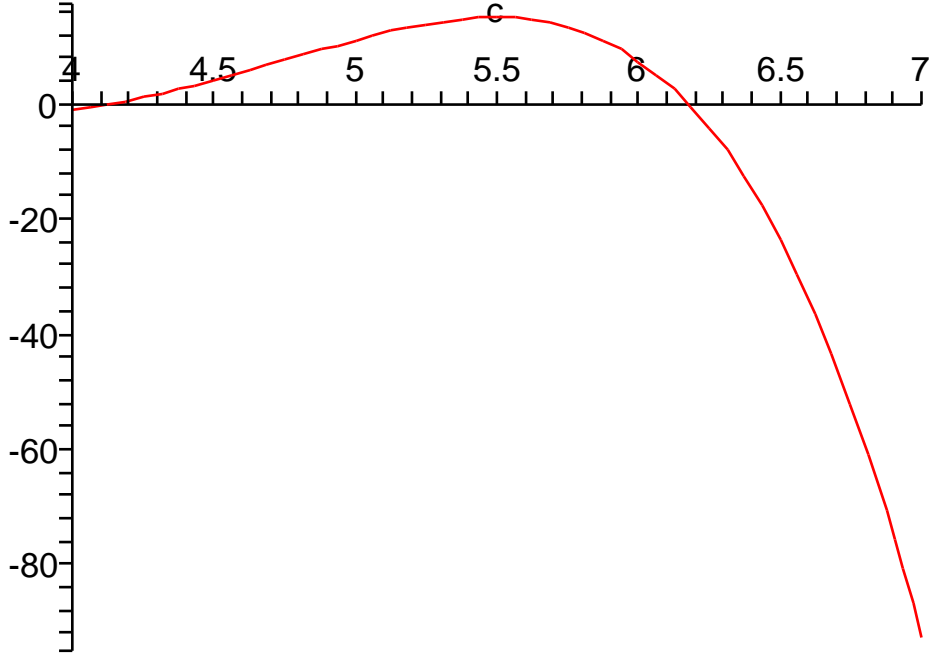}
     \caption{graph of y for $4\leq c\leq 7$}
   \end{figure}

 One can see directly from graph \ref{fig:y}, that there exists $c\in (4,7)$ such that $y>0$, therefore we have
 \begin{theo}
 For $d=11$, $\ocal_3(1)$ is big.
 \end{theo}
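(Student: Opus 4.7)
The plan is direct: combine the preceding theorem with a single pointwise positivity check for the polynomial $y(c)$. That theorem already establishes, for $d = 11$, the lower bound
$$h^0(X_3, \ocal_3(bn,n)) \geq y\, n^5 + O(n^4),$$
where $y = y(c)$ is the explicit quartic in $c = b+1$ displayed in its statement. Since $\dim X_3 = 5$, positivity of $y$ at any admissible value of $c$ immediately gives bigness of the ($\Q$-)line bundle $\ocal_3(b,1)$ on $X_3$. So the entire remaining task is to exhibit one $c_0$ in the relevant range with $y(c_0) > 0$.

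For that positivity step I would evaluate $y$ at an integer in $(4, 7)$, for example $c = 5$ (so $b = 4$). Taking $c$ to be an integer is convenient because it avoids $\Q$-line bundles altogether and lets one work with the honest line bundle $\ocal_3(4,1)$. Plugging $c = 5$ into the displayed quartic is a finite rational arithmetic calculation; the figure already visualizes that $y(c)$ is positive throughout a subinterval of $(4, 7)$, so any rational value chosen there works.

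Finally, to upgrade bigness of $\ocal_3(b,1)$ to bigness of $\ocal_3(1)$, I would invoke the injection
$$\ocal_3(b,1) \hookrightarrow \ocal_3(1+b) = \ocal_3(1)^{\otimes (1+b)}$$
recorded at the start of Section 4. Passing to the symmetric/tensor power $n$, sections of $\ocal_3(bn, n)$ embed into sections of $\ocal_3(1)^{\otimes (1+b)n}$, so this latter grows like $n^5 = n^{\dim X_3}$, which is exactly bigness of $\ocal_3(1)$.

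The main (modest) obstacle is purely arithmetical: verifying $y(c_0) > 0$ given the large rational coefficients in the quartic, and choosing $c_0$ in a range where all the sums used in the earlier theorem are genuinely of the claimed form (in particular so that the condition $(d-4)q - p > 2d$ used to apply the restriction theorem is preserved asymptotically). Beyond these bookkeeping checks, no further conceptual input is required.
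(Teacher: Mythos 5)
Your proposal is correct and follows the paper's own route: the paper likewise combines the preceding $d=11$ estimate with a positivity check of the quartic $y(c)$ on the interval $(4,7)$ (done there by inspecting the graph rather than by evaluation) and then uses the injection $\ocal_3(b,1)\hookrightarrow\ocal_3(1+b)$ recorded at the start of Section 4 to conclude that $\ocal_3(1)$ is big. Your concrete choice $c=5$ (so $b=4$) indeed works, since $y(5)=\frac{981871}{88434}\approx 11.1>0$, so your explicit arithmetic verification is, if anything, slightly more rigorous than the paper's appeal to the figure.
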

\section{The Fourth Semple Jet Bundle}\label{sec:4sem}

\subsection{Euler Characteristic}As the notations we use on the third Semple jet bundle, on $X_4$ we will denote by
 $$\ocal_4(a_2,a_3,a_4)=\pi^*_{4,2}\ocal_2(a_2)\otimes \pi^*_{4,3}\ocal_3(a_3)\otimes \ocal_4(a_4),\qquad \ocal_4(a_3, a_4)=\pi^*_4\ocal_3(a_3)\otimes \ocal_4(a_4))$$
 To show that $\ocal_4(1)$ is big, instead of showing that $\ocal_4(2,1)$ is big as on $X_3$, we will show that $\ocal_4(6,2,1)$ is big for $d\geq 10$.
First we need to calculate $\chi(X_4, \ocal_4(6n,2n,n))$. By pushing forward to $X_2$, we get a formula similar to the one in proposition \ref{for:chi3}
\begin{prop}
\begin{equation}\label{for:chi4}
\chi(\ocal_4(6n,2n,n))=\sum_{k=0}^n\sum_{l=0}^{3n-3k}\chi(\ocal_2(9n-4k-3l)\otimes \pi_2^*(\det V_1^*)^{\otimes (k+l)})
\end{equation}
\end{prop}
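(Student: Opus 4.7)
The plan is to mimic the derivation of formula \eqref{for:chi3} for $\ocal_3(2n,n)$, successively pushing the line bundle down the tower $X_4 \to X_3 \to X_2$, applying at each stage the projection formula and the symmetric-power filtration that comes from the tautological sequence on the projectivized bundle.

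First I would push $\ocal_4(6n,2n,n) = \pi_{4,2}^*\ocal_2(6n) \otimes \pi_4^*\ocal_3(2n) \otimes \ocal_4(n)$ forward along $\pi_4 \colon X_4 \to X_3$. Each fiber is a $\CP^1$ and the restriction of $\ocal_4(n)$ is $\ocal_{\CP^1}(n)$ with $n \geq 0$, so $R^i\pi_{4*}\ocal_4(n) = 0$ for $i \geq 1$ and $\pi_{4*}\ocal_4(n) = S^n V_3^*$. By the projection formula and the Leray spectral sequence,
\[
\chi(\ocal_4(6n,2n,n)) = \chi\bigl(X_3,\; S^n V_3^* \otimes \pi_{3,2}^*\ocal_2(6n) \otimes \ocal_3(2n)\bigr).
\]
Next, the exact sequence $0 \to \ocal_3(1) \to V_3^* \to T_{3,2}^* \to 0$ gives a filtration of $S^n V_3^*$ whose graded pieces are $\ocal_3(n-k) \otimes (T_{3,2}^*)^{\otimes k}$, $0 \leq k \leq n$. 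Substituting $T_{3,2}^* = \pi_3^*\det V_2^* \otimes \ocal_3(-2)$ (valid since $\rank V_2 = 2$), the graded pieces of $S^n V_3^* \otimes \ocal_3(2n)$ simplify to $\ocal_3(3n-3k) \otimes \pi_3^*(\det V_2^*)^{\otimes k}$.

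Then I would push each such piece along $\pi_3 \colon X_3 \to X_2$. Since $3n-3k \geq 0$, higher direct images again vanish and $\pi_{3*}\ocal_3(3n-3k) = S^{3n-3k} V_2^*$, producing $S^{3n-3k} V_2^* \otimes \ocal_2(6n) \otimes (\det V_2^*)^{\otimes k}$ on $X_2$. Here I must use the relation $\det V_2^* = \pi_2^*\det V_1^* \otimes \ocal_2(-1)$ (the $r=2$ case of \eqref{for:det}), which converts the factor $\ocal_2(6n) \otimes (\det V_2^*)^{\otimes k}$ into $\ocal_2(6n-k) \otimes \pi_2^*(\det V_1^*)^{\otimes k}$. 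Finally, applying the same filtration trick one more time to $S^{3n-3k} V_2^*$, using $T_{2,1}^* = \pi_2^*\det V_1^* \otimes \ocal_2(-2)$, its $l$-th graded piece is $\ocal_2(3n-3k-3l) \otimes \pi_2^*(\det V_1^*)^{\otimes l}$ for $0 \leq l \leq 3n-3k$. Tensoring with the previous factor gives a total twist of
\[
\ocal_2\bigl((6n-k) + (3n-3k-3l)\bigr) \otimes \pi_2^*(\det V_1^*)^{\otimes (k+l)} = \ocal_2(9n-4k-3l) \otimes \pi_2^*(\det V_1^*)^{\otimes (k+l)}.
\]
Additivity of the Euler characteristic on filtrations (applied at both the $V_3^*$ and $V_2^*$ stages) then yields the double sum in the statement.

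No step introduces a genuinely new obstacle beyond those already surmounted for \eqref{for:chi3}; the proof is essentially bookkeeping. The only delicate point is the $\ocal_2(-k)$ shift hidden in $(\det V_2^*)^{\otimes k}$, which is precisely what produces the coefficient $-4k$ (rather than $-3k$) of the $\ocal_2$-exponent and which I expect to be the one place an error can easily creep in.
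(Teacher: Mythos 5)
Your proof is correct and takes essentially the same approach the paper intends: the paper's own proof is literally the remark that one should ``repeat the arguments on $X_3$,'' and your two-stage pushforward along $\pi_4$ and $\pi_3$, with the filtrations of $S^nV_3^*$ and $S^{3n-3k}V_2^*$ and the substitutions $T_{3,2}^*=\pi_3^*\det V_2^*\otimes\ocal_3(-2)$, $\det V_2^*=\pi_2^*\det V_1^*\otimes\ocal_2(-1)$, $T_{2,1}^*=\pi_2^*\det V_1^*\otimes\ocal_2(-2)$, reproduces exactly the graded pieces $\ocal_2(9n-4k-3l)\otimes\pi_2^*(\det V_1^*)^{\otimes(k+l)}$ that the paper itself displays later in its $H^2$ estimates. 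In particular, your tracking of the hidden $\ocal_2(-k)$ twist from $(\det V_2^*)^{\otimes k}$, which produces the coefficient $-4k$, is precisely right.
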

\begin{proof}
Just repeat the arguments on $X_3$.
\end{proof}
Therefore to calculate $\chi(\ocal_4(6n,2n,n))$, we need to use the Hirzebruch-Riemann-roch  formula for line bundles on $X_2$. Denote by $\mathfrak{L}_{k,l}$ the line bundle $\ocal_2(9n-4k-3l)\otimes \pi_2^*(\det V_1^*)^{\otimes (k+l)}$.
Again, we just need to calculate the term $\frac{1}{24}c_1(\mathfrak{L}_{k,l})^4$ in the expression of $\chi(X_4,\mathfrak{L}_{k,l})$.

By formula \ref{for:det}, we have $$c_1(\det V_1^*)=-c_1-u_1$$
hence $$c_1(\mathfrak{L}_{k,l})=(9n-4k-3l)u_2-(k+l)(c_1+u_1)$$
By expanding out the expression of $c_1(\mathfrak{L}_{k,l})^4$, and plugging the equations
\begin{eqnarray}
u_2^2+c_1(V_1)u_2+c_2(V_1)=0\\u_1^2+c_1u_1+c_2=0
\end{eqnarray}
where $c_1(V_1)=c_1+u_1$ and $$c_2(V_1)=-u_1c_1(T_{X_1|X})=-u_1(c_1+2u_1)=c_1u_1+2c_2$$
we get

\begin{equation}
c_1(\mathfrak{L}_{k,l})^4=f(k,l)c_2\\ -g(k,l)c_1^2
\end{equation}
where $$f(k,l)=5(9n-4k-3l)^4+12(9n-4k-3l)^3(k+l)+\\6(9n-4k-3l)^2(k+l)^2+4(9n-4k-3l)(k+l)^3$$
and $$g(k,l)=4(9n-4k-3l)^3(k+l)\\+6(9n-4k-3l)^2(k+l)^2+4(9n-4k-3l)(k+l)^3+(9n-4k-3l)^4$$

Summing up, we get
\begin{prop}
\begin{equation}
 \chi(\ocal_4(6n,2n,n))=(\frac{23629}{60}c_2-\frac{1213}{12}c_1^2)n^6+O(n^5)
\end{equation}
\end{prop}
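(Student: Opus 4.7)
The plan is a direct Hirzebruch-Riemann-Roch computation on the 4-fold $X_2$ applied termwise to the filtration identity for $\chi(\ocal_4(6n,2n,n))$ established just above, combined with a Riemann-sum passage from the double sum to a double integral over a triangle.

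Since $\dim X_2 = 4$, HRR gives $\chi(L) = \frac{1}{24}c_1(L)^4 + (\text{Todd corrections of total degree} \leq 3 \text{ in } c_1(L))$ for every line bundle $L$ on $X_2$. When $L = \mathfrak{L}_{k,l}$, the class $c_1(\mathfrak{L}_{k,l}) = (9n-4k-3l)u_2 - (k+l)(c_1+u_1)$ has coefficients of size $O(n)$ on the summation region $\{0\leq k\leq n,\ 0\leq l\leq 3n-3k\}$, so each Todd correction is at most $O(n^3)$ termwise and at most $O(n^5)$ after summing over the $O(n^2)$-sized triangle. Thus only the quartic piece $\frac{1}{24}c_1(\mathfrak{L}_{k,l})^4$ contributes to the coefficient of $n^6$, and by the identity $c_1(\mathfrak{L}_{k,l})^4 = f(k,l)c_2 - g(k,l)c_1^2$ derived just above, the problem reduces to evaluating $\sum_{k,l} f(k,l)$ and $\sum_{k,l} g(k,l)$ modulo $O(n^5)$.

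Both $f$ and $g$ are homogeneous polynomials of degree four in $(n,k,l)$, so after the rescaling $k = xn$, $l = yn$ they become $n^4 \tilde F(x,y)$ and $n^4 \tilde G(x,y)$ with $\tilde F,\tilde G$ independent of $n$. The standard Riemann-sum estimate for smooth integrands then gives
\begin{equation*}
\sum_{k=0}^{n}\sum_{l=0}^{3n-3k} f(k,l) = n^6 \iint_{\Delta} \tilde F(x,y)\, dx\, dy + O(n^5), \qquad \Delta = \{(x,y) : 0 \leq x \leq 1,\ 0 \leq y \leq 3-3x\},
\end{equation*}
and analogously for $g$. Combining the pieces, the coefficient of $n^6$ in $\chi(\ocal_4(6n,2n,n))$ equals $\frac{1}{24}\bigl(c_2 \iint_{\Delta} \tilde F\,dx\,dy - c_1^2 \iint_{\Delta} \tilde G\,dx\,dy\bigr)$.

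What remains is the explicit evaluation of these two polynomial integrals over $\Delta$. Each integrand expands into a handful of monomials of the form $(9-4x-3y)^{4-j}(x+y)^j$ for $0 \leq j \leq 4$, and each such monomial integrates over the triangle $\Delta$ by iterated elementary integration in closed form. The only real obstacle is computational bookkeeping — a symbolic-algebra check is advisable to confirm that the arithmetic indeed yields the stated values $\frac{1}{24}\iint_{\Delta}\tilde F = \frac{23629}{60}$ and $\frac{1}{24}\iint_{\Delta}\tilde G = \frac{1213}{12}$, matching the proposition.
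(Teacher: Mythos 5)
Your proposal is correct and takes essentially the same route as the paper: termwise Hirzebruch--Riemann--Roch on the fourfold $X_2$ applied to the filtration formula for $\chi(\ocal_4(6n,2n,n))$, keeping only the quartic term $\frac{1}{24}c_1(\mathfrak{L}_{k,l})^4=\frac{1}{24}\bigl(f(k,l)\,c_2-g(k,l)\,c_1^2\bigr)$ since the lower Todd terms contribute only $O(n^5)$, and then summing over the triangle (the paper compresses this last step into ``summing up, we get''). I verified your two integrals, e.g.\ in barycentric coordinates where $A=9\lambda_1+5\lambda_2$ and $B=\lambda_2+3\lambda_3$ over the standard simplex with Jacobian $3$: one indeed gets $\frac{1}{24}\iint_\Delta \tilde F\,dx\,dy=\frac{23629}{60}$ and $\frac{1}{24}\iint_\Delta \tilde G\,dx\,dy=\frac{1213}{12}$, confirming the stated coefficients.
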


\begin{rem}
Using similar formula, we can also calculate $\chi(\ocal_4(2n,n))$, actually
$$ \chi(\ocal_4(2n,n))=(\frac{1}{6}c_1^2-\frac{61}{30}c_2)n^6+O(n^5)$$
which is clearly negative for $n$ big. This explains why we do not use $\ocal_4(2n,n)$
\end{rem}
Now that we have $\chi$, by the equation $\chi=h^0-h^1+h^2-h^3+h^4-h^5+h^6$ on $X_4$, to have
an estimation of $h^0$, we need to estimate $h^2$, $h^4$ and $h^6$.

First, since $R^i\pi_{4*}(\ocal_4(6n,2n,n))=0$ for $n\geq -1$ and $i>0$, for $n$ big, we always have
$$h^6(X_4, \ocal_4(6n,2n,n))=h^6(X_3, \pi_{4*}\ocal_4(6n,2n,n))=0$$

Next, since we have a filtration of $\pi_{4*}\ocal_4(6n,2n,n)=\pi_3^*(\ocal_2(6n))\otimes S^{n}V_3^*\otimes \ocal_3(2n)$ with graded bundle
$$Gr^{\bullet}(\pi_3^*(\ocal_2(6n))\otimes S^{n}V_3^*\otimes \ocal_3(2n))=\bigoplus_{0\leq k \leq n}\ocal_3(3n-3k)\otimes \pi_3^*(\det V_2^*)^{\otimes k}\otimes \pi_3^*(\ocal_2(6n)$$
Since $3n-3k\geq 0$, we have vanishing higher direct images, therefore we can calculate the cohomology of the graded pieces by pushing forward to $X_2$, thus we have the following inequalities
\begin{equation}
h^2(X_4, \ocal_4(6n,2n,n))\leq \sum_{0\leq k\leq n}h^2(X_2,S^{3n-3k}V_2^*\otimes (\det V_2^*)^{\otimes k}\otimes \ocal_2(6n))
\end{equation}
\begin{equation}
h^4(X_4, \ocal_4(6n,2n,n))\leq \sum_{0\leq k\leq n}h^4(X_2,S^{3n-3k}V_2^*\otimes (\det V_2^*)^{\otimes k}\otimes \ocal_2(6n))
\end{equation}
We use the filtration of $S^{3n-3k}V_2^*\otimes (\det V_2^*)^{\otimes k}\otimes \ocal_2(6n)$ with graded bundle
$$Gr^{\bullet}(S^{3n-3k}V_2^*\otimes (\det V_2^*)^{\otimes k}\otimes \ocal_2(6n))=\bigoplus_{0\leq l \leq 3n-3k}\ocal_2(9n-4k-3l)\otimes \pi_2^*(\det V_1^*)^{\otimes (k+l)}$$

Notice that $9n-4k-3l\geq 0$ for $0\leq k \leq n$ and $0\leq l \leq 3n-3k$, again we get vanishing higher direct images of the graded pieces under the projection $\pi_2: X_2\rightarrow X_1$. Therefore we can push $\ocal_4(6n,2n,n)$ further onto $X_1$. In particular since $\dim X_1=3$, we have $$h^4(X_2, \ocal_2(9n-4k-3l)\otimes \pi_2^*(\det V_1^*)^{\otimes (k+l)})=h^4(X_1, S^{9n-4k-3l}V_1^*\otimes (\det V_1^*)^{\otimes (k+l)})=0$$
therefore we have $h^4(X_2,S^{3n-3k}V_2^*\otimes (\det V_2^*)^{\otimes k}\otimes \ocal_2(6n))=0$, hence
\begin{equation}h^4(X_4, \ocal_4(6n,2n,n))=0\end{equation}
\subsection{Estimation of $H^2$}
For $h^2$, we only have the inequality

\begin{equation}
h^2(X_4, \ocal_4(6n,2n,n))\leq \sum_{0\leq k\leq n}\sum_{0\leq l \leq 3n-3k}h^2(X_1, S^{9n-4k-3l}V_1^*\otimes (\det V_1^*)^{\otimes (k+l)})
\end{equation}

\smallskip

To simplify our notations, we denote $p=9n-4k-3l$ and $q=k+l$, then for $S^p V_1^*\otimes (\det V_1^*)^{\otimes q}$, we have the filtration with graded bundle
$$Gr^{\bullet}(S^p V_1^*\otimes (\det V_1^*)^{\otimes q})=\bigoplus_{0\leq j \leq p}\ocal_1(p-3j-q)\otimes ((j+q)K_X)$$
First we have the following
\begin{prop}When $p-3j-q\geq -1$, we have $h^2(X_1, \ocal_1(p-3j-q)\otimes ((j+q)K_X))=0$

\end{prop}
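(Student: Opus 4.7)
The plan is to push the cohomology on $X_1$ down to the surface $X$ along $\pi_1$, and then combine Serre duality with the $K_X$-semistability of $T_X^*$. Write $m := p - 3j - q$. For the borderline case $m = -1$, the line bundle $\ocal_1(-1)$ restricts to $\ocal_{\CP^1}(-1)$ on every fiber of $\pi_1$, which has vanishing cohomology in both degrees; by Grauert's theorem and the projection formula, $R^i\pi_{1*}\bigl(\ocal_1(-1) \otimes \pi_1^*((j+q)K_X)\bigr) = 0$ for all $i$, so the Leray spectral sequence yields $h^2 = 0$ at once.

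In the remaining range $m \geq 0$ the higher direct images still vanish (since $H^1(\CP^1, \ocal(m)) = 0$) and $\pi_{1*}\ocal_1(m) = S^m T_X^*$, so Leray reduces the statement to
\[
h^2\bigl(X, S^m T_X^* \otimes (j+q)K_X\bigr) = 0.
\]
Serre duality on the surface converts this into $h^0$ of the dual of $S^m T_X^* \otimes (j+q-1)K_X$. By Tsuji's Theorem \ref{theo:ts} together with Proposition \ref{theo:ko2}, the bundle $T_X^*$ is $K_X$-semistable, hence so is any twist of its symmetric power by a line bundle. A direct slope computation, using $c_1(S^m T_X^*) = \tfrac{m(m+1)}{2} K_X$ and rank $m+1$, shows that the $K_X$-slope of $S^m T_X^* \otimes (j+q-1)K_X$ equals $\bigl(\tfrac{m}{2} + j + q - 1\bigr)K_X^2$, which is strictly positive precisely when $p + q - j > 2$. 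In that regime Proposition \ref{theo:ko} applied to the (semistable) dual forces the desired $h^0$ to vanish.

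The step requiring the most care is verifying the slope inequality $p + q - j > 2$. The hypothesis $m \geq -1$ only yields $p + q - j \geq 2(j+q) - 1$, so the bound holds as soon as $j + q \geq 2$. I expect this to be an implicit standing assumption inherited from the range of summation in the surrounding argument, paralleling the $k + l \geq 2$ hypothesis invoked for the analogous $h^2$ vanishing in Section \ref{sec:3sem}; the finitely many excluded pairs with $j + q \leq 1$ contribute at most to lower-order terms and can be absorbed into the error estimate for the leading $n^6$ coefficient.
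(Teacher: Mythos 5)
Your proposal is correct in substance, but it takes a genuinely different route from the paper's own proof of this proposition. You both dispose of the borderline case $p-3j-q=-1$ identically (vanishing direct images, Leray), and both reduce the case $m:=p-3j-q\geq 0$ to $h^2(X, S^mT_X^*\otimes(j+q)K_X)$ via Serre duality; but at that point the paper rewrites the Serre dual as $S^mT_X\otimes((1-j-q)K_X)=S^mT_X^*\otimes((1-j-q-m)K_X)$ and kills $H^0$ by the Bogomolov--De Oliveira vanishing (theorem \ref{theo:bb}), with no semistability at all, whereas you invoke the Tsuji--Kobayashi slope argument (theorems \ref{theo:ts}, \ref{theo:ko}, \ref{theo:ko2}) --- which is precisely the method the paper uses for the analogous $h^2$ vanishing on $X_3$ in section \ref{sec:3sem}, transplanted here. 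The trade-off: theorem \ref{theo:bb} covers every $m\geq 1$ with no condition on $j+q$, since the twist $(1-j-q-m)(d-4)\leq m$ always embeds the bundle into $S^m[\Omega^1_X(1)]$, while your argument needs strict slope positivity $m+2(j+q)>2$ (equivalently $p+q-j>2$, as you computed correctly --- degree zero would not suffice for proposition \ref{theo:ko}), an a priori larger exceptional set; note that both proofs fail at $m=0$, $j+q\leq 1$, so the proposition as literally stated relies on the context of the summation in either case. The only weakness in your write-up is the final hedge: there is no implicit standing assumption $j+q\geq 2$, and the exceptional pairs are not finitely many across all $n$; but the hedge can be replaced by a direct one-line check that no exceptional triple ever occurs. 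Indeed, $q=k+l=0$ forces $(k,l)=(0,0)$, hence $p=9n$ and $m=9n-3j\equiv 0\pmod 3$ with $m+2j\geq 3$ for all $j$ and $n\geq 1$; and for $q=1$ the only failing configuration $m=j=0$ would force $p=1$, i.e. $9n\in\{4,5\}$, which is impossible. So every triple in the actual range satisfies your slope condition, and nothing needs to be ``absorbed'' into the error term.
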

\begin{proof}
Under the assumption, again we have vanishing higher direct images of $\ocal_1(p-3j-q)\otimes ((j+q)K_X)$ under the projection $\pi_1: X_1\rightarrow X$. When $p-3j-q=-1$, $\pi_{1*}\ocal_1(p-3j-q)\otimes ((j+q)K_X)=0$, hence the conclusion in this case.

When $p-3j-q\geq 0$, $\pi_{1*}(\ocal_1(p-3j-q)\otimes ((j+q)K_X))=S^{p-3j-q}T_X^*\otimes ((j+q)K_X)$.

By Serre duality theorem, $$H^2(X, S^{p-3j-q}T_X^*\otimes ((j+q)K_X))\cong H^0(X, S^{p-3j-q}T_X\otimes ((1-j-q)K_X))$$
Using the identity $S^{p-3j-q}T_X\otimes ((1-j-q)K_X))=S^{p-3j-q}T_X^*\otimes ((1-j-q-(p-3j-q))K_X))$, and theorem \ref{theo:bb}, it is easy to see that $H^0(X, S^{p-3j-q}T_X\otimes ((1-j-q)K_X))=0$. Therefore
$$h^2(X_1, \ocal_1(p-3j-q)\otimes ((j+q)K_X))=h^2(X, S^{p-3j-q}T_X^*\otimes ((j+q)K_X))=0$$
\end{proof}

When $p-3j-q\leq -2$, by the Leray spectral sequence of the projection $\pi_1:X_1\rightarrow X$ again ,
$$
h^2(X_1, \ocal_1(p-3j-q)\otimes \pi_1^*(K_X)^{\otimes (j+q)})=h^1(X, R^1\pi_{1*}(\ocal_1(p-3j-q)\otimes \pi_1^*(K_X)^{\otimes (j+q)}))
$$
By proposition \ref{for:R1}, we have $$R^1\pi_{1*}(\ocal_1(p-3j-q)\otimes \pi_1^*(K_X)^{\otimes (j+q)})=S^{3j+q-p-2}T_X\otimes ((j+q-1)K_X)$$
Further more, by Serre duality on $X$, we have $$h^1(X, S^{3j+q-p-2}T_X\otimes ((j+q-1)K_X))=h^1(X, S^{3j+q-p-2}T_X^*\otimes ((2-j-q)K_X))$$
Thus we have the following equation
\begin{equation}
h^2(X_1, \ocal_1(p-3j-q)\otimes \pi_1^*(K_X)^{\otimes (j+q)})=h^1(X, S^{3j+q-p-2}T_X^*\otimes ((2-j-q)K_X))
\end{equation}

We want to use $\chi(X, S^{3j+q-p-2}T_X^*\otimes ((2-j-q)K_X))$ to estimate $h^2(X_1, \ocal_1(p-3j-q)\otimes \pi_1^*(K_X)^{\otimes (j+q)})$.

\begin{prop}
\begin{eqnarray}
\sum_{\begin{array}{clcr}0\leq k\leq n,\quad 0\leq l\leq 3n-3k\\0\leq j\leq p,\quad 3j+q-p-2\geq 0
\end{array}}\chi(X, S^{3j+q-p-2}T_X^*\otimes ((2-j-q)K_X))\\
=(\frac{3617245553}{28449792}c_1^2-\frac{8184073}{20160}c_2)n^6+O(n^5)   \qquad \qquad \qquad
\end{eqnarray}
\end{prop}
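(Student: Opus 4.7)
The plan is to evaluate each term $\chi(X,S^a T_X^*\otimes(bK_X))$ with $a=3j+q-p-2$ and $b=2-j-q$ by Hirzebruch--Riemann--Roch on the surface $X$, and then to extract the leading $n^6$ coefficient of the resulting triple sum via a Riemann-integral approximation over an explicit polytope.

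First I would apply Hirzebruch--Riemann--Roch on the surface: for a rank-$r$ vector bundle $E$ on $X$,
\[
\chi(E)=\frac{1}{2}(c_1(E)^2-2c_2(E))+\frac{1}{2}c_1(E)\cdot c_1+\frac{r}{12}(c_1^2+c_2).
\]
Using $c_1(T_X^*)=c_1(K_X)=-c_1$ together with the standard splitting-principle formulas
\[
c_1(S^a T_X^*)=-\frac{a(a+1)}{2}c_1,\qquad c_2(S^a T_X^*)=\frac{a(a+1)(a-1)(3a+2)}{24}c_1^2+\frac{a(a+1)(a+2)}{6}c_2,
\]
and the twist identities $c_1(E\otimes L)=c_1(E)+r\,c_1(L)$, $c_2(E\otimes L)=c_2(E)+(r-1)c_1(E)c_1(L)+\binom{r}{2}c_1(L)^2$ for $L=bK_X$, one obtains
\[
\chi(S^a T_X^*\otimes(bK_X))=P(a,b)c_1^2+Q(a,b)c_2+R(a,b),
\]
where $P,Q$ are explicit cubics in $(a,b)$ and $R$ has degree at most $2$. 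Only the cubic parts $P$ and $Q$ can contribute to the $n^6$ coefficient after summation.

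Next I would substitute $a=3j+5k+4l-9n-2$ and $b=2-j-k-l$, so that each summand becomes a polynomial in $(n,k,l,j)$ whose leading part is homogeneous of degree~$3$. Introducing the rescaled variables $\kappa=k/n$, $\lambda=l/n$, $\mu=j/n$, the triple sum becomes a Riemann sum whose $n\to\infty$ limit gives $n^6$ times the integral over the polytope
\[
\Omega=\{(\kappa,\lambda,\mu)\in\R^3\,:\,0\le\kappa\le1,\ 0\le\lambda\le 3-3\kappa,\ 0\le\mu\le 9-4\kappa-3\lambda,\ 3\mu+5\kappa+4\lambda\ge 9\}
\]
of the rescaled cubics $\widetilde P(\kappa,\lambda,\mu)$ and $\widetilde Q(\kappa,\lambda,\mu)$ obtained from $P,Q$ after substitution. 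The statement thus reduces to verifying
\[
\int_\Omega\widetilde P\,d\kappa\,d\lambda\,d\mu=\frac{3617245553}{28449792},\qquad \int_\Omega\widetilde Q\,d\kappa\,d\lambda\,d\mu=-\frac{8184073}{20160},
\]
with all lower-order contributions absorbed into the stated $O(n^5)$ remainder via an Euler--Maclaurin estimate.

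The main obstacle is purely computational. The integrand is a cubic polynomial in three variables, but $\Omega$ has four bounding facets, and one must split the $(\kappa,\lambda)$-projection along the line $5\kappa+4\lambda=9$ in order to decide whether the effective lower bound for $\mu$ is $0$ or $(9-5\kappa-4\lambda)/3$. Performing the iterated integrals on each of the two pieces and adding the results should produce the stated rational constants. Equivalently, one could avoid the integral altogether and evaluate the original finite triple sum by iterating the standard closed forms for $\sum k^i$, $\sum l^s$, $\sum j^t$ over the given tetrahedral-style index region; the two approaches agree in the leading $n^6$ term.
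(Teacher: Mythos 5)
Your proposal is correct and is in substance the paper's own (implicit) argument: the paper states this proposition without proof, and its analogous Proposition \ref{prop:h2} is obtained by exactly your scheme --- Hirzebruch--Riemann--Roch applied to each summand, keeping only the leading cubic part, then summing over the index region --- the only cosmetic difference being that the paper evaluates each $\chi(S^aT_X^*\otimes(bK_X))$ by viewing it as a direct image of $\ocal_1(a)\otimes\pi_1^*(bK_X)$ from $X_1$ and using $u_1^3=c_1^2-c_2$, $u_1^2c_1=-c_1^2$, $u_1c_1^2=c_1^2$, $c_1^3=0$, instead of your surface-level symmetric-power Chern class formulas, both yielding the same leading part $\frac{1}{6}\left(a^3+3a^2b+3ab^2\right)c_1^2-\frac{a^3}{6}c_2$. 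Your reduction does reproduce the stated constants: with your polytope $\Omega$ and the split along $5\kappa+4\lambda=9$, one finds exactly $-\frac{1}{6}\int_\Omega(3\mu+5\kappa+4\lambda-9)^3\,dV=-\frac{8184073}{20160}$, and the $c_1^2$ integral likewise evaluates to $\frac{3617245553}{28449792}$, so the only step left unexecuted in your write-up is the routine iterated integration you describe.
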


\smallskip

By theorem \ref{theo:bb}, for $j+q>2$
$$h^0(X, S^{3j+q-p-2}T_X^*\otimes ((2-j-q)K_X))=0$$so $\chi(X, \mathfrak{F}_{j,p,q})=h^2(X, \mathfrak{F}_{j,p,q})-h^1(X, \mathfrak{F}_{j,p,q})$, where $\mathfrak{F}_{j,p,q}=S^{3j+q-p-2}T_X^*\otimes ((2-j-q)K_X)$. We will ignore the case $j+q\leq 2$.

To estimate $h^2(X, \mathfrak{F}_{j,p,q})=h^0(X, S^{3j+q-p-2}T_X\otimes ((j+q-1)K_X))=h^0(X, S^{3j+q-p-2}T_X^*\otimes ((p+1-2j)K_X))$, as in the estimation of $X_3(2n,n)$, we need to restrict $\mathfrak{F}_{j,p,q}$ to $C_{\alpha,\beta}$, where $\alpha=3j+q-p-2$ and $\beta=p+1-2j$. More precisely, for $(d-4)\beta-\alpha>2d$, $$C_{\alpha,\beta}\in |\ocal_{\CP^3}((d-4)\beta-\alpha)\otimes \ocal_X|$$ is chosen as in section \ref{subsec:on curve}.

Same as formula \ref{for:ine restr to C}, lemma \ref{lem:c} and formula \ref{for:on C} in section \ref{subsec:on curve} we have
\begin{equation}\label{for:4 ine restr to C}
H^0(X, S^{\alpha}T_X^*\otimes (\beta K_X))\leq H^0(C_{\alpha,\beta}, S^{\alpha}T_X^*\otimes (\beta K_X)|_{C_{\alpha,\beta}})
\end{equation}

\begin{equation}\label{for:4 on C}\chi(S^{\alpha}T_X^*\otimes (\beta K_X)|_{C_{\alpha,\beta}})\sim \frac{\alpha\beta(\alpha+\beta)}{2}d(d-4)^2-\frac{{\alpha}^3}{2}d(d-3)\end{equation}
and
\begin{lem}\label{lem:4 c}
For $\alpha>0$, $$H^1(C_{\alpha,\beta}, S^{\alpha}T_X^*\otimes (\beta K_X)|_{C_{\alpha,\beta}})=0$$
\end{lem}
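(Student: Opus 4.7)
The plan is to mirror the proof of Lemma \ref{lem:c} with $(p,q)$ replaced by $(\alpha,\beta)$; all three ingredients---Serre duality on the curve, the adjunction formula, and semistability of the restricted symmetric power---carry over directly.

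First I would apply Serre duality on the smooth curve $C_{\alpha,\beta}$ together with the adjunction formula $K_{C_{\alpha,\beta}} = (K_X + C_{\alpha,\beta})|_{C_{\alpha,\beta}}$ to rewrite
$$H^1(C_{\alpha,\beta}, S^\alpha T_X^* \otimes (\beta K_X)|_{C_{\alpha,\beta}}) \;\cong\; H^0\!\left(C_{\alpha,\beta},\, (S^\alpha T_X^* \otimes ((\beta-1)K_X - C_{\alpha,\beta}))^{*} |_{C_{\alpha,\beta}}\right)^{\vee}.$$
Since $C_{\alpha,\beta} \in |\ocal_{\CP^3}((d-4)\beta - \alpha)\otimes \ocal_X|$ was chosen with $(d-4)\beta - \alpha > 2d$ (the regime that actually contributes to the $n^6$ coefficient), Theorem \ref{theo:f} and the remark following it guarantee that $T_X^*|_{C_{\alpha,\beta}}$ is semistable. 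In characteristic 0 semistability is preserved under symmetric powers, twists by line bundles, and duality (Proposition \ref{theo:ko2}), so $(S^\alpha T_X^* \otimes ((\beta-1)K_X - C_{\alpha,\beta}))^{*}|_{C_{\alpha,\beta}}$ is semistable on the curve.

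By Proposition \ref{theo:ko} it is then enough to check that this dual bundle has negative degree on $C_{\alpha,\beta}$. Using $c_1(T_X^*) = (d-4)H|_X$ and $[C_{\alpha,\beta}]_X = ((d-4)\beta - \alpha)H|_X$, a direct first Chern class expansion gives
$$c_1\!\left(S^\alpha T_X^* \otimes ((\beta-1)K_X - C_{\alpha,\beta})\right) \;=\; \tfrac{\alpha+1}{2}\bigl[\alpha(d-2) - 2(d-4)\bigr]\, H|_X,$$
whose degree on $C_{\alpha,\beta}$ equals $\tfrac{\alpha+1}{2}\bigl[\alpha(d-2) - 2(d-4)\bigr]\cdot d \cdot ((d-4)\beta - \alpha)$ (using $(H|_X)^2 = d$). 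For $\alpha \geq 2$ and $d \geq 5$ this is strictly positive, hence the dual has strictly negative degree, and Proposition \ref{theo:ko} gives the required vanishing.

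The argument is essentially mechanical once Lemma \ref{lem:c} is in hand; the only real obstacle is bookkeeping the boundary case $\alpha = 1$, where the bracket $[\alpha(d-2)-2(d-4)] = -d+6$ fails to be positive for large $d$. This case contributes only finitely many triples $(k,l,j)$ to the relevant sum, so its cohomology is $O(n^5)$ and is absorbed into the error, exactly as the case $\alpha = 0$ is ignored in the parallel treatment for $X_3$ in Section \ref{subsec:on curve}.
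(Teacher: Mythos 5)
Your proposal follows essentially the same route as the paper, which proves Lemma \ref{lem:4 c} simply by declaring it \emph{same as} Lemma \ref{lem:c}: Serre duality plus adjunction on $C_{\alpha,\beta}$, semistability of $S^{\alpha}T_X^*|_{C_{\alpha,\beta}}$ coming from Theorem \ref{theo:f}, then the vanishing criterion of Propositions \ref{theo:ko} and \ref{theo:ko2}. Your Chern class computation is correct: $c_1\left(S^{\alpha}T_X^*\otimes((\beta-1)K_X-C_{\alpha,\beta})\right)=\frac{\alpha+1}{2}\left[\alpha(d-2)-2(d-4)\right]H|_X$, giving degree $\frac{\alpha+1}{2}\left[\alpha(d-2)-2(d-4)\right]d\left((d-4)\beta-\alpha\right)$ on $C_{\alpha,\beta}$, which is positive for $\alpha\geq 2$, $d\geq 5$. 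Two remarks. First, you are actually \emph{more} careful than the paper at $\alpha=1$: there the bracket is $6-d<0$ for $d\geq 7$, so the negativity needed for Proposition \ref{theo:ko} genuinely fails; the paper's proof of Lemma \ref{lem:c} claims the degree inequality follows from ``the assumptions on $p$ and $q$'' and has the identical unaddressed hole at $p=1$, so the lemma as stated (for all $\alpha>0$) is not fully proved by either argument, and your explicit flagging of this is a real, if harmless, correction. Second, your disposal of $\alpha=1$ contains a slip: the constraint $3j+q-p-2=1$ cuts out $O(n^2)$ triples $(j,k,l)$, not finitely many; nevertheless each such term is at most $O(n^2)$ (for instance by a Clifford-type bound for the semistable rank-two bundle on the curve, or by bounding $h^0(X,T_X^*\otimes\beta K_X)$ directly), so the total contribution is $O(n^4)$, absorbed into the error exactly as you assert, and the conclusion of the estimate is unaffected.
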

Then we have the following theorem
\begin{theo}
\begin{equation}h^0(X_4, \ocal_4(6n,2n,n))\geq f(d)n^6+O(n^5)
\end{equation}
where $$f(d)=d(\frac{18461}{1920}d^2-\frac{41723445050414378269}{345738849132600000}d+\frac{90181735116469021057}{345738849132600000})$$
\end{theo}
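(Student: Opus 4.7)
The overall plan is to apply $h^0 \ge \chi - h^2 - h^4 - h^6$ on $X_4$, which follows from $h^1, h^3, h^5 \ge 0$ in the alternating sum for $\chi$. The Euler characteristic $\chi(\ocal_4(6n,2n,n)) = (\frac{23629}{60}c_2 - \frac{1213}{12}c_1^2)n^6 + O(n^5)$ has already been established in this section, and the two vanishings $h^4(X_4,\ocal_4(6n,2n,n)) = 0$ and $h^6(X_4,\ocal_4(6n,2n,n)) = 0$ were also observed earlier. Thus everything comes down to a sufficiently sharp upper bound on $h^2(X_4,\ocal_4(6n,2n,n))$.

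For this upper bound I would chain the filtrations developed in the section. Pushing forward successively through $\pi_4$, $\pi_3$, and $\pi_2$, and using the vanishing of higher direct images (since $3n-3k \ge 0$ and $9n-4k-3l \ge 0$ in the relevant range), one obtains
\[
h^2(X_4,\ocal_4(6n,2n,n)) \le \sum_{k,l,j} h^2\bigl(X_1,\ocal_1(p-3j-q)\otimes \pi_1^*((j+q)K_X)\bigr),
\]
with $p = 9n-4k-3l$ and $q = k+l$, where the sum runs over $0 \le k \le n$, $0 \le l \le 3n-3k$, $0 \le j \le p$. The terms with $p-3j-q \ge -1$ vanish by the proposition already proved in this subsection. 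For $p-3j-q \le -2$, Proposition \ref{for:R1} and the Leray spectral sequence for $\pi_1: X_1 \to X$, combined with Serre duality on $X$, reduce the term to $h^1(X, \mathfrak{F}_{j,p,q})$ with $\mathfrak{F}_{j,p,q} = S^{3j+q-p-2}T_X^*\otimes ((2-j-q)K_X)$. Theorem \ref{theo:bb} then gives $h^0(\mathfrak{F}_{j,p,q}) = 0$ for $j+q > 2$, so $h^1(\mathfrak{F}_{j,p,q}) = h^2(\mathfrak{F}_{j,p,q}) - \chi(\mathfrak{F}_{j,p,q})$. Finally, Serre duality identifies $h^2(\mathfrak{F}_{j,p,q})$ with $h^0(X, S^\alpha T_X^* \otimes (\beta K_X))$ for $\alpha = 3j+q-p-2$ and $\beta = p+1-2j$, and restriction to a generic curve $C_{\alpha,\beta} \in |\ocal_X((d-4)\beta-\alpha)|$ on which $T_X^*|_{C_{\alpha,\beta}}$ is semistable (Theorem \ref{theo:f}) bounds this by the curve Euler characteristic of formula \ref{for:4 on C}, thanks to Lemma \ref{lem:4 c}.

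Putting these estimates together yields
\[
h^0(X_4,\ocal_4(6n,2n,n)) \ge \chi(\ocal_4(6n,2n,n)) + \sum_{j,k,l}\chi(\mathfrak{F}_{j,p,q}) - \sum_{j,k,l}\chi\bigl(S^\alpha T_X^*\otimes(\beta K_X)\big|_{C_{\alpha,\beta}}\bigr) + O(n^5),
\]
and the middle sum is precisely the one evaluated in the proposition immediately preceding the theorem. The main technical work, and the place I expect the most bookkeeping, is the third sum: it ranges over the region $\{0 \le k \le n,\ 0 \le l \le 3n-3k,\ 0 \le j \le p,\ \alpha > 0,\ (d-4)\beta - \alpha > 2d\}$, and the huge rational denominators in $f(d)$ reflect the fact that one must approximate the triple sum by its continuous integral and carefully extract the $n^6$-coefficient (the boundary strata, the cases $j+q \le 2$, and the range $0 < (d-4)\beta - \alpha \le 2d$ all contribute only $O(n^5)$ and can be discarded). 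After substituting the hypersurface values $c_1^2 = d(d-4)^2$ and $c_2 = d(d^2-4d+6)$ and simplifying, the leading coefficient becomes the stated $f(d)$.
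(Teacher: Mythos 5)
Your write-up tracks the paper's proof step for step through all the structural reductions, and those steps are right: the bound $h^0\ge\chi-h^2-h^4-h^6$, the vanishing of $h^4$ and $h^6$, the three-stage filtration bounding $h^2(X_4,\ocal_4(6n,2n,n))$ by $\sum_{k,l,j}h^2(X_1,\ocal_1(p-3j-q)\otimes\pi_1^*((j+q)K_X))$, the Leray/Serre reduction via Proposition \ref{for:R1} to $h^1(X,\mathfrak{F}_{j,p,q})$, the identity $h^1(\mathfrak{F}_{j,p,q})=h^2(\mathfrak{F}_{j,p,q})-\chi(\mathfrak{F}_{j,p,q})$ from Theorem \ref{theo:bb}, and the semistable curve restriction bounding $h^2(\mathfrak{F}_{j,p,q})$. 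The gap is in the very last step, which is exactly where the specific $f(d)$ comes from.

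You propose to extract the $n^6$-coefficient by approximating the triple sum over the $d$-dependent set $I=\{\alpha>0,\ (d-4)\beta>\alpha,\dots\}$ by its continuous integral and claim this simplifies to the stated $f(d)$. It does not: the bounding hyperplane $(d-4)\beta=\alpha$ has $d$-dependent slope, so that integral is a rational function of $d$ with denominators involving $d-4$, not the cubic polynomial $d\bigl(\frac{18461}{1920}d^2-\cdots\bigr)$ of the statement. What the paper actually does is relax the domain to make it $d$-independent, and this is the one idea your proposal is missing: since both summands are nonnegative where $\alpha,\beta>0$, one may enlarge $I$ by weakening $(d-4)\beta>\alpha$ to $\beta>0$ for the \emph{subtracted} term $\frac{\alpha\beta(\alpha+\beta)}{2}d(d-4)^2$, and shrink $I$ to $\{6\beta>\alpha\}\subseteq\{(d-4)\beta>\alpha\}$ for the \emph{reinstated} term $\frac{\alpha^3}{2}d(d-3)$; both moves can only decrease the lower bound, and summing over these two fixed regions is precisely what produces the stated coefficients. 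Note the shrinking step uses $d\ge10$ (so that $d-4\ge6$), and this is the only place that hypothesis enters — your argument never invokes it. The paper's $d=9$ remark, where the original $d$-dependent sum is evaluated directly and yields a negative $n^6$-coefficient, confirms that the direct computation you describe genuinely differs from the relaxed one and would not reproduce $f(d)$; per fixed $d$ your route still gives a valid (indeed sharper) lower bound, but it does not prove the theorem as stated.
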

\begin{proof}
By the arguments above, first we have
\begin{eqnarray}
&h^0(X_4,& \ocal_4(6n,2n,n))\geq \chi(X_4, \ocal_4(6n,2n,n))\\&+&\sum_{\begin{array}{clcr}0\leq k\leq n,\quad 0\leq l\leq 3n-3k\\0\leq j\leq p,\quad 3j+q-p-2\geq 0
\end{array}}\chi(X, S^{3j+q-p-2}T_X^*\otimes ((2-j-q)K_X))\\&-&\sum_{(j,k,l)\in I}(\frac{\alpha\beta(\alpha+\beta)}{2}d(d-4)^2-\frac{{\alpha}^3}{2}d(d-3))\qquad \qquad\qquad\qquad
\end{eqnarray}
where the set $I$ is defined by $$I=\{j,k,l|0\leq j\leq p,0\leq k\leq n,0\leq l\leq 3n-3k, (d-4)\beta> \alpha, \alpha>0\}$$
We already have the results of the first two sums on the right side. In the last sum of the left side, for $d\geq 10$, for the first term we can make the set $I$ bigger by just requiring $\beta>0$ and for the second term we can make the set $I$ smaller by requiring $(10-4)\beta>\alpha$, then we get the conclusion.
\end{proof}
\begin{rem}
When $d=9$, we can calculate the summation directly and get $$h^0(X_4, \ocal_4(6n,2n,n))\geq-304.5398797n^6+O(n^5)$$ therefore no conclusion can be made in this case.
\end{rem}

Since for $d\geq 10$, $f(d)>0$, we have
\begin{cor}
$\ocal_{X_4}(1)$ is big for $d\geq 10$.
\end{cor}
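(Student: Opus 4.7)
The plan is to deduce this corollary directly from the preceding theorem, which gives the asymptotic lower bound $h^0(X_4, \ocal_4(6n,2n,n)) \geq f(d) n^6 + O(n^5)$ with $f(d)>0$ for $d\geq 10$. First I would observe that $\dim X_4 = \dim X + 4 = 6$, so bigness of any line bundle $L$ on $X_4$ is equivalent to $h^0(X_4, L^{\otimes n})$ growing like $n^6$. A quick numerical check (plugging $d=10$ into the explicit expression for $f(d)$) confirms that $f(10)$ is positive, and since $f$ is a cubic with positive leading coefficient $\tfrac{18461}{1920}$, the lower bound $f(d)>0$ extends to all $d \geq 10$.

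Next, I would promote bigness of the weighted line bundle $\ocal_4(6n,2n,n)$ to bigness of $\ocal_4(1)$ itself. The key tool is the natural injection of sheaves
\[
\ocal_4(a_2, a_3, a_4) \hookrightarrow \ocal_4(a_2+a_3+a_4)
\]
for nonnegative weights, which is the $X_4$-analogue of the injection $\ocal_3(b,1) \hookrightarrow \ocal_3(1+b)$ used earlier. Concretely, this comes from the tautological sections produced at each stage of the Semple tower: pulling back the tautological section of $\ocal_k(1)$ over the appropriate divisor gives an inclusion that raises the top weight by the sum of the lower weights. Applied with $(a_2, a_3, a_4) = (6n, 2n, n)$, this yields
\[
h^0\bigl(X_4, \ocal_4(9n)\bigr) \;\geq\; h^0\bigl(X_4, \ocal_4(6n, 2n, n)\bigr) \;\geq\; f(d)\, n^6 + O(n^5).
\]

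Since the right-hand side grows like $n^6 = n^{\dim X_4}$ with a positive leading coefficient for $d \geq 10$, the $\Q$-line bundle $\tfrac{1}{9}\ocal_4(1)$ is big, hence $\ocal_4(1)$ itself is big. There is essentially no obstacle at this stage — all the technical work (filtrations, Hirzebruch--Riemann--Roch bookkeeping, semistability estimates, restriction to curves $C_{\alpha,\beta}$) has already been performed in the preceding theorem. The only subtleties worth double-checking are the positivity of $f(d)$ at the boundary value $d=10$ and the validity of the sheaf injection $\ocal_4(6n,2n,n) \hookrightarrow \ocal_4(9n)$; both are routine, the first a direct numerical evaluation and the second a formal consequence of the Semple tower construction.
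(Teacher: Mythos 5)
Your proposal is correct and follows essentially the same route as the paper: the paper likewise deduces the corollary immediately from $f(d)>0$ for $d\geq 10$ in the preceding theorem, with the reduction from $\ocal_4(6n,2n,n)$ to $\ocal_4(1)$ resting on the same weighted-bundle injection (the analogue of $\ocal_3(b,1)\rightarrow\ocal_3(1+b)$, coming from the effective divisors $\pi_k^*\ocal_{k-1}(1)=\ocal_k(1)\otimes\ocal(-D_k)$ in the Semple tower) that the paper invokes when it announces it will prove bigness of $\ocal_4(6,2,1)$ instead of $\ocal_4(1)$. The only minor point is that positivity of the leading coefficient of the cubic $f$ alone does not formally yield $f(d)>0$ on all of $[10,\infty)$; one should note that the quadratic factor of $f(d)/d$ has both roots below $10$ (approximately $2.8$ and $9.8$), after which the conclusion is exactly as in the paper.
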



\begin{thebibliography}{WWW}
\bibitem[BB]{bb} Fedor Bogomolov, Bruno De Oliveira,
Symmetric tensors and geometry of PN subvarieties. (English summary), Geom. Funct. Anal. 18 (2008), no. 3, 637-656
\bibitem[De95]{de} Jean-Pierre Demailly,  Algebraic criteria for Kobayashi hyperbolic projective varieties and jet differentials. Algebraic geometry〞Santa Cruz 1995, 285每360, Proc. Sympos. Pure Math., 62, Part 2, Amer. Math. Soc., Providence, RI, 1997

\bibitem[DeE]{dee}Jean-Pierre Demailly, Jawher El Goul, Hyperbolicity of generic surfaces of high degree in projective 3-space, American Journal of Mathematics, Vol. 122, No. 3 (Jun., 2000), pp. 515-546, The Johns Hopkins University Press
\bibitem[Di1]{di1} Simone Diverio,
Differential equations on complex projective hypersurfaces of low dimension.
Compos. Math. 144 (2008), no. 4, 920每932.
\bibitem[Di2]{di2}Simone Diverio, Existence of global invariant jet differentials on projective hypersurfaces of high degree,
Math. Ann. 344 (2009), no. 2, 293每315.
\bibitem[Di3]{di3}Simone Diverio
Smooth metrics on jet bundles and applications.
Osaka J. Math. 46 (2009), no. 4, 1019每1045.

\bibitem[DMR]{dmr} Simone Diverio, Joel Merker, Erwan Rousseau, Effective algebraic degeneracy, Invent. Math. 180 (2010), 161-223
\bibitem[F]{f}Hubert Flenner, Restrictions of semistable bundles on projective varieties, Comment. Math. Helvetici 59 (1984) 635-650

\bibitem[GG]{gg}Mark Green, Phillip Griffiths, Two applications of algebraic geometry to entire holomorphic
mappings. The Chern Symposium 1979, Proc. Internal. Sympos. Berkeley, CA,
1979, Springer-Verlag, New York (1980), 41每74.
\bibitem[Ko]{ko} Shoshichi Kobayashi, Differential geometry of complex vector bundles, Publications of the Mathematical society of Japan no.15, Iwanami Shoten, Publishers and Princeton University Press, 1987
\bibitem[Mcm]{mcm} Michael McQuillan,
Holomorphic curves on hyperplane sections of 3-folds.
Geom. Funct. Anal. 9 (1999), no. 2, 370每392.
\bibitem[Me]{me} Jo\"{e}l Merker, Demailly-Semple jets of orders 4 and 5 in dimension 2, Int. J. Contemp. Math. Sciences 3, 18 (2008) 861--933
\bibitem[Pa]{pa}Mihai P\v{a}un, Vector fields on the total space of hypersurfaces
in the projective space and hyperbolicity, Math. Ann. (2008) 340:875每892
\bibitem[Rou]{rou} Erwan Rousseau, \'{E}tude des jets de Demaily-Semple en dimension 3. Ann.Inst.Fourier (Grenoble)56(2), 397-421, 2006
\bibitem[Siu]{siu}Yum-Tong Siu, Hyperbolicity in Complex Geometry. The Legacy of Niels Henrik Abel, pp. 543每566.
Springer, Berlin (2004)
\bibitem[Ts]{ts} Hajime Tsuji, Stability of tangent bundles of minimal algebraic varieties, Topology Vol. 27, No. 4, pp. 429-442. 1988

\end{thebibliography}
\end{document}